\documentclass[a4paper]{article}

%%%%%PACKAGE%%%%%
\usepackage{graphicx} % Required for inserting images
\usepackage{amsthm, amsmath, amssymb, amsfonts}
\usepackage{latexsym}
\usepackage{mathrsfs} % Required for \mathscr
\usepackage{mathtools}
\usepackage{url}
\usepackage{enumerate}
\usepackage[margin=20mm]{geometry}
\usepackage{hyperref}
\usepackage[dvipsnames]{xcolor}
\usepackage{stmaryrd} %double bracket etc.
\usepackage{docmute}
\usepackage{centernot}

%%%%%DEFINE COLOR%%%%%
\definecolor{OLgreen}{rgb}{0.074,0.541,0.027}

%%%%%OTHER COMMANDS%%%%%
\mathtoolsset{showonlyrefs}

\allowdisplaybreaks

\hypersetup{
    colorlinks=true,
    linkcolor={blue!80!black},
    citecolor={OLgreen!90!black},
    urlcolor=magenta,
    linktocpage=true,
}

%%%%%NEWTHEOREM%%%%%
\numberwithin{equation}{section}

\theoremstyle{plain}
\newtheorem{Thm}{Theorem}[section]
\newtheorem{Lem}[Thm]{Lemma}
\newtheorem{Prp}[Thm]{Proposition}

\theoremstyle{definition}

\newtheorem{Ass}[Thm]{Assumption}

\newtheorem{Rem}[Thm]{Remark}

\theoremstyle{plain}
\newtheorem*{Thm*}{Theorem}
\newtheorem*{Lem*}{Lemma}
\newtheorem*{Prp*}{Proposition}
\newtheorem*{Cor*}{Corollary}
\theoremstyle{definition}
\newtheorem*{Def*}{Definition}
\newtheorem*{Rem*}{Remark}
\newtheorem*{Exa*}{Example}

%%%%%Declare%%%%%

 % | | absolute value
\DeclarePairedDelimiter{\norm}{\lVert}{\rVert} % || || norm
 % () round brackets
 % {} curly brackets
 % [] square brackets
\DeclarePairedDelimiter{\abra}{\langle}{\rangle} % < > angle brackets
 % floor function
 % ceil function

%%%%%newcommand%%%%%

\newcommand{\E}{\mathbb{E}}
\newcommand{\N}{\mathbb{N}}
\newcommand{\R}{\mathbb{R}}

\newcommand{\bB}{\mathbb{B}}

\newcommand{\bD}{\mathbb{D}}

\newcommand{\bF}{\mathbb{F}}

\newcommand{\bT}{\mathbb{T}}

\newcommand{\cB}{\mathcal{B}}

\newcommand{\cF}{\mathcal{F}}
\newcommand{\cG}{\mathcal{G}}
\newcommand{\cH}{\mathcal{H}}
\newcommand{\cI}{\mathcal{I}}
\newcommand{\cJ}{\mathcal{J}}

\newcommand{\cL}{\mathcal{L}}

\newcommand{\cO}{\mathcal{O}}

\newcommand{\cS}{\mathcal{S}}

\newcommand{\cX}{\mathcal{X}}

\newcommand{\sF}{\mathscr{F}}

\newcommand{\ind}{\mathbf{1}}

\newcommand{\dis}{\displaystyle}

\newcommand{\relmiddle}[1]{\mathrel{}\middle#1\mathrel{}}

\title{\texorpdfstring{\vspace{-8mm}}{}Large deviations for a spatial average of \\ stochastic heat and wave equations}
\author{Masahisa Ebina
\thanks{Institute of Mathematics for Industry, Kyushu University, 744 Motooka, Nishi-ku, Fukuoka 819-0395, Japan. \\ E-mail: \texttt{ebinamasahisa@gmail.com}}
}
\date{}

\begin{document}
\maketitle
\begin{abstract}
We consider the one-dimensional stochastic heat and wave equations driven by Gaussian noises with constant initial conditions. 
We study the spatial average of the solutions on an interval of length $R$ and show that the family of laws of the spatial average satisfies the large deviation principle as $R$ goes to infinity.
We also present the large deviation principle in the space of continuous functions. 
We prove these results using the tools of Malliavin calculus to evaluate the covariance of nonlinear functionals of the solution.
\end{abstract}

\noindent
\textbf{Keywords:} Stochastic heat and wave equations, Spatial average, Malliavin calculus, Large deviation principle.\\
\textbf{2020 Mathematics Subject Classification:} 60F10, 60H07, 60H15.

\maketitle

\section{Introduction}\label{section Introduction}
This paper considers the following form of one-dimensional stochastic heat and wave equations
\begin{align}
\label{SPDE}    
    \mathcal{L}u(t,x) = \sigma(u(t,x))\dot{W}(t,x), \quad (t,x) \in [0,T] \times \mathbb{R}, \quad  \cL = \partial_t - \frac{1}{2}\partial_x^2 \ \ \text{or} \ \ \cL = \partial_t^2 - \partial_x^2,
\end{align}
with constant initial conditions.
Here $T > 0$ is fixed, $\sigma\colon \mathbb{R} \to \mathbb{R}$, and $\dot{W}(t,x)$ is the formal notation of a centered Gaussian noise.
We analyze a random field solution $\{u(t,x)\}_{(t,x) \in [0,T] \times \R}$ to \eqref{SPDE} (see Section \ref{subsection Solutions to stochastic heat and wave equations} for details), and the goal of this paper is to show the Large Deviation Principle (LDP for short) for the spatial average of the solution process defined by
\begin{equation*}
    \frac{1}{R}F_R(t) \coloneqq \frac{1}{R}\int_{0}^{R}(u(t,x) - \E[u(t,x)])dx, \quad t\in [0,T]. 
\end{equation*}

In recent years, the asymptotic behavior of spatial averages of solutions to stochastic PDEs has been actively studied, and various limit theorems have been established in different settings using tools from Malliavin calculus.
For example, the works \cite{1dSHECLT, SHECLT, 1dSWECLT, SWE12d} treat the equation \eqref{SPDE} and establish quantitative central limit theorems and its functional version for the spatial averages of the solution using Malliavin-Stein's method.
Following these results, the authors of \cite{SHEergo, d123SWEergo} investigate in detail the spatial ergodicity of the solutions to stochastic heat and wave equations and show that the law of large numbers
\begin{equation}\label{LLN}
    \frac{1}{R}F_R(t) \xrightarrow[]{R \to \infty} 0, \qquad \text{a.s. and in $L^p(\Omega)$, $p \in [1,\infty)$} 
\end{equation}
holds under some mild conditions on noises.
Furthermore, the law of the iterated logarithm for $F_R(t)$, when $\cL$ is the heat operator, is also shown in \cite{1dSHELIL}.

The common underlying idea in all of these works is that the spatial average of the solution can be approximately viewed as a sum of weakly dependent random variables, depending on the spatial correlation of the driving noise.
With this and \eqref{LLN} in mind, it is then natural to study the associated LDP for the law of spatial average $R^{-1}F_R(t)$.
However, it remains unknown whether the LDP holds, except in the trivial case where $\sigma$ is a constant function, in which the law of $F_R(t)$ follows a Gaussian distribution.
For general $\sigma$, a large deviation upper bound can be obtained if $\sigma$ is a bounded Lipschitz function, using a similar argument as in \cite[Lemma 2.1]{MC2}.
However, this argument cannot provide a large deviation lower bound and, therefore, cannot establish the LDP.
This paper aims to address this issue and to present that the LDP holds for the family of the laws of $\{R^{-1}F_R(t)\}_{R > 0}$ under stronger conditions on $\sigma$.

\subsection{Settings and main results}

Throughout the paper, we assume that a centered Gaussian noise $\dot{W}(t,x)$ in \eqref{SPDE} has the following covariance structure
\begin{equation}\label{noise cov}
    \E[\dot{W}(t,x)\dot{W}(s,y)] = \delta_0(t-s)\Gamma(x-y),
\end{equation}
where $\delta_0$ is the Dirac delta function and $\Gamma$ is a nonnegative, nonnegative definite tempered measure. 

To establish the LDP results, we need to impose certain conditions on $\sigma$ in \eqref{SPDE} and on $\Gamma$.

\begin{Ass}
\label{Assum 1}
We work under the following assumptions:
\begin{enumerate}
    \item[(1)] $\sigma \colon \R \to \R$ belongs to $C_{\mathrm{b}}^1(\R)$, the space of bounded $C^1$ functions with bounded derivatives.
    \item[(2)] $\Gamma$ satisfies one of the following two cases:
    \begin{enumerate}
        \item[(I)] $\Gamma(dx) = \delta_0(dx)$, \quad \textit{i.e.} $\dot{W}$ is the space-time white noise.
        \item[(II)] 
        $\Gamma(dx) = \Gamma(x)dx$ is a nonnegative, nonnegative definite measure, and for some $\rho>0$ and $\eta > 1$,  
        \begin{equation}
            \Gamma(x) = O(\exp(-\rho|x|^{\eta})), \quad \text{as $|x| \to \infty$.}
        \end{equation}
    \end{enumerate}
\end{enumerate}
\end{Ass}

\begin{Rem} 
\begin{enumerate}
    \item[(1)] For technical reasons, we assume that $\sigma$ is bounded so that the quadratic variations associated with the solution $u$ can be controlled almost surely. See Section \ref{section LDP for the finite-dimensional distributions} for details.
    \item[(2)] It is known that the LDP holds for stationary sequences of random variables under sufficiently strong mixing-type conditions (see \textit{e.g.} \cite{MixingLDP}, \cite[Section 6.4]{Dembo-Zeitouni}). Given this, and recalling that the spatial integral of the solution can heuristically be interpreted as a sum of weakly dependent random variables, it is natural to impose an appropriate mixing-type condition in order to establish the LDP for the spatial average. The assumption on the spatial correlation $\Gamma$ of the noise is introduced to ensure such a mixing-type condition (cf. Proposition \ref{Prp Cov estimate}). 
    \item[(3)] In case (II), $\Gamma(x)dx$ becomes a tempered measure, and consequently, $\Gamma(x) \in L^1(\R)$. 
    A function $\Gamma$ satisfying (II) can be easily constructed using convolution. Specifically, for a  locally integrable nonnegative function $f$ with $f(x) = O(\exp(-\rho 2^{\eta} |x|^{\eta}))$, the function $\Gamma(x) = (f(-\cdot) \ast f(\cdot))(x)$ satisfies assumption (II) (\textit{cf.} Lemma \ref{Lem exp decay convolution}).
\end{enumerate}
\end{Rem}

For any $k \in \N$ and $(t_1, \ldots , t_k) \eqqcolon \mathbb{T}_k \in [0,T]^k$, let $\mu_R^{\mathbb{T}_k}$ denote the law on $\R^k$ of 
\begin{equation*}
    \frac{1}{R}\mathbb{F}_R(\mathbb{T}_k) \coloneqq \frac{1}{R}(F_R(t_1), \ldots, F_R(t_k)) = \left(\frac{1}{R}F_R(t_1), \ldots, \frac{1}{R}F_R(t_k)\right).
\end{equation*}
The first main result is the LDP for the finite-dimensional distributions of the spatial average.

\begin{Thm}
\label{Thm main 1}
Suppose that Assumption \ref{Assum 1} is satisfied.
Then, for any $k \in \N$ and $(t_1, \ldots, t_k) \eqqcolon \mathbb{T}_k \in [0,T]^k$, the family of measures $\{\mu_R^{\mathbb{T}_k}\}_{R>0}$ satisfies the LDP with speed $R$ and the convex good rate function 
\begin{equation}
\label{rate function I}
    I^{\mathbb{T}_k}(x) = \sup_{\lambda \in \R^k} \{ \lambda \cdot x - \Lambda^{\mathbb{T}_k}(\lambda )\}, \qquad \text{where} \ \  \Lambda^{\mathbb{T}_k}(\lambda) \coloneqq \lim_{R \to \infty}\frac{1}{R}\log \E[e^{\lambda \cdot \mathbb{F}_R(\mathbb{T}_k)}].
\end{equation}
In particular, the limit $\Lambda^{\mathbb{T}_k}(\lambda)$ exists for any $\lambda \in \R^k$. 
\end{Thm}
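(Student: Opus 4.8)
The natural route is the Gärtner–Ellis theorem: to prove the LDP for $\{\mu_R^{\mathbb{T}_k}\}_{R>0}$ with the stated rate function, it suffices to show that for every $\lambda \in \R^k$ the normalized cumulant generating function
\[
  \Lambda_R^{\mathbb{T}_k}(\lambda) \coloneqq \frac{1}{R}\log \E\bigl[e^{\lambda \cdot \mathbb{F}_R(\mathbb{T}_k)}\bigr]
\]
converges as $R \to \infty$ to a limit $\Lambda^{\mathbb{T}_k}(\lambda)$ which is finite and (being a pointwise limit of convex functions, hence automatically convex) differentiable — or, more precisely, essentially smooth and lower semicontinuous — so that Gärtner–Ellis upgrades the upper bound to the full LDP with $I^{\mathbb{T}_k} = (\Lambda^{\mathbb{T}_k})^*$. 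Since $\sigma$ is bounded, $|F_R(t_j)| \le C R$ almost surely for a deterministic constant $C$ (the stochastic integral defining $u - \E[u]$ has an a.s.\ bound coming from the boundedness of $\sigma$ and the structure of the quadratic variation, as flagged in Remark~(1) and Section~\ref{section LDP for the finite-dimensional distributions}); hence $\Lambda_R^{\mathbb{T}_k}(\lambda)$ is finite for every $R$ and $\lambda$, and any subsequential limit is finite. So the whole theorem reduces to: (a) existence of the limit $\Lambda^{\mathbb{T}_k}(\lambda)$, and (b) essential smoothness of the limit. Once (a) and (b) are in hand, convexity and goodness of $I^{\mathbb{T}_k}$ are standard consequences (finiteness of $\Lambda^{\mathbb{T}_k}$ everywhere forces $I^{\mathbb{T}_k}$ to have compact level sets).

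For the existence of the limit (step (a)), the key tool is the covariance estimate of Proposition~\ref{Prp Cov estimate}: under Assumption~\ref{Assum 1}, nonlinear functionals of $u$ localized in disjoint spatial windows have covariances decaying fast in the gap between the windows, reflecting the (super-)exponential decay of $\Gamma$. The plan is to run a subadditivity / almost-subadditivity argument on $g_R(\lambda) \coloneqq \log \E[e^{\lambda \cdot \mathbb{F}_R(\mathbb{T}_k)}]$. Concretely, split $[0,R_1+R_2]$ into $[0,R_1]$ and $[R_1,R_1+R_2]$, separated if necessary by a buffer zone of width $\ell = \ell(R_1,R_2)$ growing slowly (e.g.\ logarithmically) so that the cross-correlation between $\mathbb{F}$-increments over the two blocks is negligible; using Malliavin calculus (an interpolation / Gaussian-interpolation formula for $\E[e^{X+Y}]$ in terms of $\E[e^X]\E[e^Y]$ and the covariance $\langle DX, -DL^{-1}DY\rangle$, much as in the clustering estimates of \cite{SHEergo, d123SWEergo}) one shows
\[
  \bigl| g_{R_1+R_2}(\lambda) - g_{R_1}(\lambda) - g_{R_2}(\lambda)\bigr| \le \varepsilon(R_1,R_2),
\]
with an error term $\varepsilon$ that is $o(R_1 + R_2)$ and summable enough to apply a Fekete-type lemma for almost-subadditive sequences. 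This yields $\lim_{R\to\infty} R^{-1} g_R(\lambda) = \Lambda^{\mathbb{T}_k}(\lambda)$, and along the way gives that $\Lambda^{\mathbb{T}_k}$ is finite (by the a.s.\ bound on $F_R$) — this already proves the last sentence of the theorem.

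For essential smoothness (step (b)), since $\Lambda^{\mathbb{T}_k}$ is finite on all of $\R^k$, its effective domain is $\R^k$, so essential smoothness reduces to differentiability of $\Lambda^{\mathbb{T}_k}$. This I would get either by showing the pre-limit functions $\Lambda_R^{\mathbb{T}_k}$ are uniformly $C^1$ with equicontinuous gradients on compact sets — the gradient being $\nabla \Lambda_R^{\mathbb{T}_k}(\lambda) = R^{-1}\E_\lambda[\mathbb{F}_R(\mathbb{T}_k)]$ under the tilted measure, whose derivative in $\lambda$ is $R^{-1}$ times a tilted covariance matrix of $\mathbb{F}_R(\mathbb{T}_k)$, again controlled by Proposition~\ref{Prp Cov estimate} — so that the limit inherits differentiability; or, more cheaply, by invoking that a finite convex function on $\R^k$ that arises as such a limit is automatically differentiable off a Lebesgue-null set and then arguing that the Gärtner–Ellis hypotheses in the "every point is exposed" form still give the full LDP. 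The cleanest path, and the one I would write up, is the uniform-$C^1$ route: establish $\sup_{|\lambda|\le M}\|\nabla^2 \Lambda_R^{\mathbb{T}_k}(\lambda)\| \le C_M$ uniformly in $R$ via the covariance bound, pass to the limit, and conclude $\Lambda^{\mathbb{T}_k} \in C^1(\R^k)$, hence essentially smooth; Gärtner–Ellis (see \cite[Theorem 2.3.6]{Dembo-Zeitouni}) then delivers the LDP with convex good rate function $I^{\mathbb{T}_k}$.

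\textbf{Main obstacle.} The crux is the almost-subadditivity estimate in step (a): obtaining a clean bound on $|g_{R_1+R_2}(\lambda) - g_{R_1}(\lambda) - g_{R_2}(\lambda)|$ that is genuinely $o(R_1+R_2)$ uniformly in how the split is made. Unlike the $L^2$ covariance computations in the CLT papers, here one must control the exponential moment $\E[e^{\lambda\cdot\mathbb{F}_R}]$ and its decoupling across blocks; this is exactly where the boundedness of $\sigma$ (for a.s.\ control of quadratic variations / exponential moments) and the super-exponential decay of $\Gamma$ (to make a logarithmically thin buffer zone suffice) are both essential, and making the Malliavin interpolation argument produce a summable error requires care. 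The remaining steps — finiteness, convexity, goodness, and the final invocation of Gärtner–Ellis — are comparatively routine.
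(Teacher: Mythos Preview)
Your plan follows the G\"artner--Ellis route, whereas the paper takes a different path: it first proves exponential tightness, then shows the limit
\[
\Lambda_g^{\mathbb{T}_k}=\lim_{R\to\infty}\frac1R\log\E\bigl[e^{Rg(\mathbb{F}_R(\mathbb{T}_k)/R)}\bigr]
\]
exists for every $g$ in the well-separating class $\mathcal G_{\mathrm{BLC}}(\R^k)$ of bounded-above, Lipschitz, \emph{concave} functions, invokes the inverse Varadhan lemma (Bryc's theorem) to obtain the LDP with some good rate function, and only afterwards proves convexity of that rate function by a direct ball-doubling argument (Lemma~\ref{Lem convexity inequality}). The Fenchel--Legendre representation and the existence of $\Lambda^{\mathbb{T}_k}(\lambda)$ are then read off from Proposition~\ref{Prp convexity and duality}. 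The point of working with $\mathcal G_{\mathrm{BLC}}$ rather than with linear functionals is that for $g<0$ the map $y\mapsto e^{Lg(y/L)}$ is globally Lipschitz with constant $\mathrm{Lip}(g)$, so Proposition~\ref{Prp Cov estimate} applies directly and the approximate subadditivity goes through.

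Your proposal, by contrast, has two genuine gaps. First, the claim ``$|F_R(t_j)|\le CR$ almost surely'' is false: boundedness of $\sigma$ gives an a.s.\ bound on the \emph{quadratic variation} $\langle F_{R,t}\rangle_t\le C_{T,\Gamma,\sigma}R$ (this is what Remark~(1) refers to), hence sub-Gaussian tails for $F_R(t)$ via the exponential martingale inequality, but not an a.s.\ bound on $F_R(t)$ itself. This already breaks your finiteness argument as stated, and more seriously it means $x\mapsto e^{\lambda\cdot x}$ is not Lipschitz, so the covariance estimate of Proposition~\ref{Prp Cov estimate} does not control $\mathrm{Cov}(e^{\lambda\cdot\mathbb{F}_L},e^{\lambda\cdot\mathbb{F}^{L+\Theta}_{L+\Theta+R}})$; your almost-subadditivity step~(a) for the cumulant generating function is therefore not justified by the tools at hand. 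Second, step~(b) is the real obstruction to the G\"artner--Ellis approach here: bounding $\nabla^2\Lambda_R^{\mathbb{T}_k}(\lambda)=R^{-1}\mathrm{Cov}_\lambda(\mathbb{F}_R(\mathbb{T}_k))$ uniformly in $R$ requires covariance estimates under the \emph{tilted} measure $dP_\lambda\propto e^{\lambda\cdot\mathbb{F}_R}\,dP$, for which the Malliavin/Clark--Ocone bound \eqref{Covariance bound} underlying Proposition~\ref{Prp Cov estimate} is no longer available. Your fallback (``differentiable a.e.\ plus exposed points'') does not meet the hypotheses of G\"artner--Ellis either. The paper's Bryc-based approach is designed precisely to avoid having to prove differentiability of $\Lambda^{\mathbb{T}_k}$.
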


As we will see in Lemma \ref{Lem diff est}, for each $R>0$, $\{R^{-1}F_R(t)\}_{t \in [0,T]}$ admits a continuous modification. 
Let $\mu_R$ denote the law of this continuous modification. 
By applying Theorem \ref{Thm main 1}, we can extend the LDP to the space of continuous functions $C([0,T])$, which is our second main result.

\begin{Thm}
\label{Thm main 2}
Suppose that Assumption \ref{Assum 1} is satisfied.
The family of measures $\{\mu_R\}_{R>0}$ satisfies the LDP in $C([0,T])$ endowed with the uniform convergence topology, with speed $R$ and the convex good rate function 
\begin{equation*}
    I(f) = \sup_{k\in \N} \, \sup_{0 \leq t_1 < \cdots < t_k \leq T} \bigg\{ I^{\mathbb{T}_k} \Big( (f(t_1), \ldots, f(t_k)) \Big) \bigg\}, \quad f \in C([0,T]). 
\end{equation*}

\end{Thm}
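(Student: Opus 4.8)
The plan is to deduce the pathwise LDP from the finite-dimensional LDP of Theorem \ref{Thm main 1} via the Dawson--Gärtner projective limit theorem together with an exponential tightness argument, following the standard scheme in \cite[Section 4.6]{Dembo-Zeitouni}. First I would consider the projective system indexed by finite subsets $\{t_1 < \cdots < t_k\}$ of $[0,T]$, with projections $C([0,T]) \to \R^k$, $f \mapsto (f(t_1),\dots,f(t_k))$. Theorem \ref{Thm main 1} gives that $\{\mu_R^{\mathbb{T}_k}\}_R$ satisfies the LDP on $\R^k$ with good rate function $I^{\mathbb{T}_k}$, and one checks the consistency of the rate functions under further projections (this is immediate from the contraction principle applied to coordinate projections). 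The Dawson--Gärtner theorem then yields that $\{\mu_R\}_R$ satisfies the LDP on the projective limit space — which, since $C([0,T])$ carries a finer topology than the projective-limit (pointwise) topology — gives only a \emph{weak} LDP in $C([0,T])$ with the rate function $I(f)$ written as the stated double supremum.

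To upgrade the weak LDP to a full LDP in the uniform topology, the key additional ingredient is exponential tightness: I must produce compact sets $K_L \subset C([0,T])$ with $\limsup_{R\to\infty} R^{-1}\log \mu_R(K_L^c) \le -L$. By the Arzelà--Ascoli theorem, it suffices to control the modulus of continuity of $R^{-1}F_R(\cdot)$ at exponential scale, i.e. to establish an estimate of the form
\begin{equation*}
    \limsup_{R \to \infty} \frac{1}{R} \log \bP\!\left( \sup_{|t-s|\le \delta} \bigl| R^{-1}F_R(t) - R^{-1}F_R(s) \bigr| > \vep \right) \xrightarrow[\delta \to 0]{} -\infty
\end{equation*}
for each fixed $\vep>0$. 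Here I would lean on Lemma \ref{Lem diff est} (the temporal increment bound already invoked for the existence of a continuous modification): combined with a chaining/Garsia--Rodemich--Rumsey argument and the fact that, by Assumption \ref{Assum 1}(1), $\sigma$ is bounded so the relevant stochastic integrals and their quadratic variations are controlled almost surely, one obtains uniform exponential moment bounds $\sup_{R>0}\E\bigl[\exp(\lambda R^{-1}|F_R(t)-F_R(s)|^2/|t-s|^{\alpha})\bigr] < \infty$ for suitable $\alpha>0$ and small $\lambda>0$; a Markov/Chebyshev estimate at scale $R$ then gives the required exponential modulus-of-continuity bound. Exponential tightness in hand, the general principle (\cite[Lemma 4.1.23, Corollary 4.2.6]{Dembo-Zeitouni}) promotes the weak LDP to a full LDP with the same rate function, and automatically guarantees that $I$ is a good rate function; convexity of $I$ is inherited from the convexity of each $I^{\mathbb{T}_k}$ since a supremum of convex functions is convex.

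The main obstacle I anticipate is the exponential tightness step — specifically, obtaining the exponential (rather than merely polynomial) control of the temporal increments of $R^{-1}F_R$. The limit theorems and ergodicity results cited in the introduction only require finite-moment bounds, whereas here I need Gaussian-type tails for the increments uniformly in $R$; this is exactly where the boundedness of $\sigma$ in Assumption \ref{Assum 1}(1) becomes essential, as it lets one dominate the Malliavin derivative / quadratic-variation quantities appearing in the stochastic-integral representation of $F_R(t)-F_R(s)$ by deterministic quantities, which then feed into an exponential-martingale (or BDG-type with exponential moments) estimate. Once that uniform-in-$R$ exponential increment bound is secured, the rest of the argument is the routine Dawson--Gärtner plus exponential-tightness machinery.
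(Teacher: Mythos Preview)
Your proposal is correct and follows essentially the same route as the paper: Dawson--G\"artner on the projective system of finite time sets (Proposition~\ref{Prp proj gene}) yields the LDP on $\R^{[0,T]}$ with the pointwise topology and the stated rate function, and then exponential tightness in the uniform topology---obtained from the Gaussian-type increment tail of Lemma~\ref{Lem diff est} (which comes from the exponential martingale inequality, using that $\sigma$ is bounded) fed through a chaining argument and Arzel\`a--Ascoli---upgrades this to the full LDP in $C([0,T])$ via \cite[Corollary 4.2.6]{Dembo-Zeitouni}. The only cosmetic difference is that the paper carries out the chaining step by quoting an Orlicz--H\"older embedding result of Schied (Lemma~\ref{Lem schied corollary 7.1}) rather than Garsia--Rodemich--Rumsey, and works on $(\R^{[0,T]},\cO_{\mathrm u})$ before restricting to the closed subset $C([0,T])$; your GRR suggestion would work equally well.
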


\begin{Rem}
A similar LDP also holds for the laws of a spatial average $(2R)^{-1}\int_{-R}^{R}(u(t,x) - \E[u(t,x)])dx$ because, in our setting, a solution process $\{u(t,x)\}_{(t,x) \in [0,T] \times \R}$ to \eqref{SPDE} satisfies the property (S). 
See Section \ref{subsection Solutions to stochastic heat and wave equations} and Lemma \ref{Lem shift invariance} for more details. 
\end{Rem}

\begin{Rem}
Since the law of large numbers and central limit theorems have also been established for the spatial average of stochastic heat and wave equations in two or more spatial dimensions (see \textit{e.g.} \cite{SHEergo, d123SWEergo, SHECLT, SWE12d, 3dSWECLT, Ebi25}), we can expect that the LDP should hold in the higher-dimensional case under the same assumptions. 
Unfortunately, the approach used to prove Theorems \ref{Thm main 1} and \ref{Thm main 2} does not appear to apply easily to the higher-dimensional case.
To establish the LDP results, we rely on an approximate subadditivity argument to ensure the existence of certain limits in Section \ref{section LDP for the finite-dimensional distributions}. However, some of the estimates provided there become too crude to guarantee the existence of such limits, essentially due to the absence of a natural ordering in higher dimensions. 
\end{Rem}

After introducing some notation and conventions below, the structure of the paper is as follows. 
In Section \ref{section Preliminaries}, we briefly present the formulation of \eqref{SPDE} and recall some preliminary results on the large deviation theory and the Malliavin calculus used in the paper. 
Section \ref{section Covariance estimates} presents a key estimate that is crucial for proving the main results. Then, we prove Theorem \ref{Thm main 1} and Theorem \ref{Thm main 2} in Section \ref{section LDP for the finite-dimensional distributions} and Section \ref{section Sample path LDP}, respectively.

\vspace{2mm}
\noindent
\textbf{Acknowledgment.}
The author would like to thank his advisor, Professor Seiichiro Kusuoka, for his encouragement and several valuable comments on an earlier version of this work.
He also thanks Dr. Takahiro Mori for his helpful suggestions.
This work was supported by the Japan Society for the Promotion of Science (JSPS), KAKENHI Grant Number JP22J21604.

\vspace{2mm}
\noindent
\textbf{Notation.} 
We will use the following standard notations throughout the paper. 
\begin{itemize}
    \item For $x,y \in \R^k$, let $|x|$ and $x \cdot y$ denote the usual Euclidean norm and inner product on $\R^k$, respectively. 
    \item For $a, b \in \R$, we use $a \land b \coloneqq \min\{a, b\}$ and $a \lor b \coloneqq \max\{a,b\}$.
    \item For a Lipschitz function $f$, we write $\mathrm{Lip}(f)$ for its Lipschitz constant. 
    \item For some $l$ quantities $Q_1, \ldots Q_l$ \ $(l \in \N)$,  we denote by $C_{Q_1, \ldots, Q_l}$ a constant depending on $Q_1, \ldots Q_l$ that may vary from line to line. 
    These constants are not necessarily the same in different places and are used to simplify expressions.
\end{itemize}

In this paper, we adopt the following notational convention (with slight abuse of notation) to treat both cases of $\Gamma$ in Assumption \ref{Assum 1} in a unified manner.

\vspace{2mm}
\noindent
\textbf{Convention.} 
Throughout the paper, we set
\begin{equation*}
    \lVert \delta_0 \rVert_{L^1(\R)} \coloneqq 1 \quad \text{and} \quad \int_{\R^2} f(x)g(y) \delta_{0}(x-y)dxdy \coloneqq \int_{\R}f(x)g(x)dx. 
\end{equation*}
When $\Gamma = \delta_0$, we interpret $\delta_0$ as $\delta_0(x) = O(\exp(-\rho|x|^{\eta}))$ with $\eta = \infty$, in analogy with case (II) of Assumption \ref{Assum 1}. 
For example, expressions like $2 \land \eta$ that appear in later sections should be understood as $2 \land \infty = 2$ when $\Gamma = \delta_0$. 
On the other hand, for constants that are stated to depend on $\eta$ but are not explicitly written, we assume that they are independent of $\eta$ when $\Gamma = \delta_0$. 
For example, a constant like $C_{T, \eta, \rho}$ that appears in later sections should be understood as not depending on $\eta$ when $\Gamma = \delta_0$.

\section{Preliminaries}\label{section Preliminaries}

\subsection{Stochastic heat and wave equations}\label{subsection Solutions to stochastic heat and wave equations}
In this section, we recall the formulation of \eqref{SPDE} using the random field approach. 
Let us define the separable real Hilbert space $\cH$ by completing the space $\cS(\R)$ of real-valued rapidly decreasing functions on $\R$ with the real inner product
\begin{equation}
    \abra{\varphi, \psi}_{\mathcal{H}} = \int_{\R^2}\varphi(x)\psi(y)\Gamma(x-y)dxdy, \qquad \varphi, \psi \in \cS(\R). 
\end{equation}
We set $\cH_T = L^2([0,T]; \cH)$. 
When $\Gamma = \delta_0$, then $\cH = L^2(\R)$ and $\cH_T = L^2([0,T];L^2(\R)) \simeq L^2([0,T] \times \R)$.
Note that $\ind_{[0,t]}\ind_{A} \in \cH_T$ for all $t \geq 0$ and $A \in \cB_{\mathrm{b}}(\R)$, where $\cB_{\mathrm{b}}(\R)$ is the collection of all bounded Borel sets of $\R$.

Let $W = \{W(h) \mid h \in \cH_T \}$ be a centered Gaussian process with the covariance function 
\begin{equation}
    \E[W(h_1)W(h_2)] = \abra{h_1, h_2}_{\cH_T},
\end{equation}
defined on a complete probability space $(\Omega, \sF, P)$, where $\sF$ is assumed to be generated by $W$. 
We write $\sF^0_t$ for the $\sigma$-algebra generated by $\{W(\ind_{[0,s]}\ind_{A}) \mid 0 \leq s \leq t, \ A \in \cB_{\mathrm{b}}(\R) \}$ and the $P$-null sets, and we set $\sF_t = \cap_{s>t}\sF_s^0$ for $t \in [0,T)$ and $\sF_T = \sF_T^0$.
Then, we can construct a worthy martingale measure $\{W(\ind_{[0,t]}\ind_{A}) \mid t \in [0,T], A \in \cB_{\mathrm{b}}(\R)\}$ with respect to $\{\sF_t\}_{t \in [0,T]}$. 
With this measure, we can apply the theory of stochastic integration developed in \cite{Walsh} to formulate the equation \eqref{SPDE}. 
For further details, we refer the reader to \cite{Walsh, DQ}.  

We consider \eqref{SPDE} with the constant initial conditions 
\begin{equation}
    u(0,x) = c_H \ \  \text{for $\cL = \partial_t - \frac{1}{2}\partial_x^2$} \qquad \text{and} \qquad u(0,x) = c_{W1}, \  \partial_t u(0,x) = c_{W2} \ \  \text{for $\cL = \partial_t^2 - \partial_x^2$}.
\end{equation}
A solution to \eqref{SPDE} is defined as a predictable process $\{u(t,x)\}_{(t,x) \in [0,T] \times \R}$ satisfying
\begin{equation}\label{sol eq}
    u(t,x) = \cI(t) + \int_0^t\int_{\R}G(t-s, x-y)\sigma(u(s,y))W(ds,dy) \quad \text{a.s.}, 
\end{equation}
for all $(t,x) \in [0,T] \times \R$. 
Here, $\cI(t)$ is the contribution from initial conditions and $G$ is the associated fundamental solution of the operator $\cL$ in \eqref{SPDE}.
In particular, when $\cL = \partial_t - \frac{1}{2}\partial_x^2$,
\begin{equation}\label{cond she}
    G(t,x) = (2\pi t)^{-\frac{1}{2}}e^{-\frac{x^2}{2t}}, \quad \cI(t) = G(t) \ast c_H = c_H,
\end{equation}
and when $\cL = \partial_t^2 - \partial_x^2$, 
\begin{equation}\label{cond swe}
     G(t,x) = \frac{1}{2}\ind_{\{y\in \R : |y| < t\}}(x),  \quad \cI(t) = (G(t)\ast c_{W2}) + \partial_t(G(t)\ast c_{W1}) = tc_{W2} + c_{W1}. 
\end{equation}
A solution $u$ to \eqref{SPDE} is said to be unique if, for any other solution $\{v(t,x)\}_{(t,x) \in [0,T] \times \R}$, we have $P(u(t,x) = v(t,x)) = 1$ for every $(t,x) \in [0,T] \times \R$.
The following is a standard result (see \textit{e.g.} \cite{Dal99, DQ}). 
\begin{Prp}\label{Prp sol. wp}
Let $\sigma \colon \R \to \R$ be a Lipschitz function.  
The equation \eqref{SPDE} has a unique solution $\{u(t,x)\}_{(t,x) \in [0,T] \times \R}$, 
and the following holds.
\begin{enumerate}[\normalfont(i)]
    \item For all $p \geq 1$, $\sup_{(t,x) \in [0,T] \times \R}\norm{u(t,x)}_{L^p(\Omega)} < \infty$ and $(t,x) \mapsto u(t,x)$ is $L^p(\Omega)$-continuous.
    \item For each $t\geq 0$, the process $\{u(t,x)\}_{x\in \R}$ is strictly stationary.
\end{enumerate}
\end{Prp}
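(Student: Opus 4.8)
The plan is to run the classical Picard iteration scheme for SPDEs driven by worthy martingale measures, as developed in \cite{Walsh, Dal99, DQ}. Set $u_0(t,x) = \cI(t)$ and define recursively
\begin{equation*}
    u_{n+1}(t,x) = \cI(t) + \int_0^t\int_\R G(t-s,x-y)\sigma(u_n(s,y))\,W(ds,dy).
\end{equation*}
To see that each stochastic integral is well defined, one checks that the integrand is predictable and square integrable in $\cH_T$, which reduces to
\begin{equation*}
    \phi(r) \coloneqq \int_{\R^2}G(r,y)G(r,z)\Gamma(y-z)\,dy\,dz \quad \text{satisfying} \quad \int_0^T \phi(r)\,dr < \infty.
\end{equation*}
This holds in both cases of Assumption \ref{Assum 1}: in case (I) one has $\phi(r) = \int_\R G(r,y)^2\,dy$, equal to $(4\pi r)^{-1/2}$ for the heat kernel and $r/2$ for the wave kernel; in case (II) one uses $\Gamma \in L^1(\R)$ together with $\|G(r,\cdot)\|_{L^\infty(\R)}\|G(r,\cdot)\|_{L^1(\R)}$ being integrable in $r$ on $[0,T]$, from the explicit forms \eqref{cond she}, \eqref{cond swe}. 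Applying the Burkholder--Davis--Gundy inequality for the stochastic integral against $W$ (for $p\ge2$; the case $p<2$ follows since $P$ is a probability measure), Minkowski's integral inequality, and the linear growth of the Lipschitz function $\sigma$, one gets $\sup_x\norm{u_{n+1}(t,x)}_{L^p(\Omega)}^2 \les 1 + \int_0^t \phi(t-s)\sup_x\norm{u_n(s,x)}_{L^p(\Omega)}^2\,ds$, and a Gronwall argument adapted to the (possibly singular) kernel $\phi$ yields $\sup_n\sup_{(t,x)\in[0,T]\times\R}\norm{u_n(t,x)}_{L^p(\Omega)} < \infty$.

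Next I would show that $\{u_n\}$ is Cauchy: with $D_n(t) \coloneqq \sup_{x\in\R}\norm{u_{n+1}(t,x) - u_n(t,x)}_{L^p(\Omega)}^2$, the same BDG estimate together with $\mathrm{Lip}(\sigma)$ gives $D_{n+1}(t) \les \int_0^t \phi(t-s)D_n(s)\,ds$, and iterating this inequality — using the fast decay of the convolution powers of the integrable kernel $\phi$ — shows $\sum_n \sup_{t\leq T} D_n(t)^{1/2} < \infty$. Hence $u_n(t,x)$ converges in $L^p(\Omega)$, uniformly in $(t,x)\in[0,T]\times\R$, to a limit $u(t,x)$; after passing to a predictable modification and taking limits in the recursion, $u$ solves \eqref{sol eq}, which gives existence together with the first assertion of (i). Uniqueness follows along the same lines: if $u,v$ are two solutions with uniformly bounded $p$-th moments, then $t\mapsto \sup_x\norm{u(t,x) - v(t,x)}_{L^p(\Omega)}^2$ satisfies a homogeneous integral inequality with kernel $\phi$, and the adapted Gronwall lemma forces it to vanish. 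The $L^p(\Omega)$-continuity of $(t,x)\mapsto u(t,x)$ in (i) is obtained directly from \eqref{sol eq}, bounding the increment of the continuous deterministic term $\cI$ and, via BDG, the increment of the stochastic integral, using the continuity of $r\mapsto G(r,\cdot)$ and of its spatial translates in the norms entering $\phi$, together with the uniform moment bound on $\sigma(u)$.

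For the strict stationarity in (ii), I would exploit spatial translation invariance of the noise. For $z\in\R$ define $W^z$ on $\cH_T$ by $W^z(h) = W(h(\cdot,\cdot-z))$; since $\Gamma(x-y)$ depends only on $x-y$, a change of variables shows that $W^z$ has the same covariance, hence the same law, as $W$. Performing the corresponding change of variables $y\mapsto y+z$ in the Picard recursion shows that $u_n(t,x+z)$, viewed as a functional of $W$, coincides with the $n$-th Picard iterate built from $W^z$; letting $n\to\infty$, the random field $\{u(t,x+z)\}_{(t,x)}$ is the same measurable functional of $W^z$ that $\{u(t,x)\}_{(t,x)}$ is of $W$. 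Combined with $W^z \stackrel{\mathrm{d}}{=} W$, this yields $\{u(t,x+z)\}_{x\in\R} \stackrel{\mathrm{d}}{=} \{u(t,x)\}_{x\in\R}$ for every fixed $t$, which is the asserted strict stationarity.

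The step I expect to be the main obstacle is the Gronwall-type argument in the presence of the singular kernel for the heat operator: since $r\mapsto\phi(r)$ blows up at $r=0$, the elementary form of Gronwall's inequality does not apply, and one must iterate the convolution inequality and control the series of convolution powers of $\phi$ (a Picard--Lindel\"of-type estimate), while keeping track of the dependence of constants on $p$ and $T$. Everything else is routine bookkeeping with the BDG inequality and the explicit kernels \eqref{cond she}--\eqref{cond swe}.
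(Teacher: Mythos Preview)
The paper does not actually prove this proposition: it is stated as a standard result with a reference to \cite{Dal99, DQ} and no argument is given. Your Picard iteration sketch is precisely the standard approach carried out in those references (existence and moment bounds via BDG and an iterated Gronwall-type inequality with kernel $\phi$, uniqueness by the same inequality, and stationarity via the spatial translation invariance of the noise), so your proposal is correct and aligned with what the paper invokes.
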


Moreover, it is known (\textit{cf.} \cite[Lemma 18]{Dal99}) that the solution $u$ to \eqref{SPDE} satisfies the property (S). 
For the definition of property (S), see \cite[Definition 5.1]{Dal99}. 
One of the consequences of property (S) is that for any $n \in \N$ and any $(t_1, x_1), \ldots, (t_n, x_n) \in [0,T] \times \R$, 
\begin{equation}
\label{stg stationarity}
    (u(t_1, x_1), \ldots, u(t_n, x_n)) \stackrel{d}{=} (u(t_1, x_1 + a), \ldots, u(t_n, x_n + a))
\end{equation}
holds for each $a \in \R$. 
Here, $\stackrel{d}{=}$ denotes equality in distribution between the left-hand side and the right-hand side.

\begin{Rem}
Strictly speaking, \cite[Lemma 18]{Dal99} shows that a sequence of approximations $u_n$ converging in $L^2(\Omega)$ to a solution $u$ of the SPDE considered there satisfies property (S) for each $n$. However, it is easy to see that $u$ itself also satisfies property (S), as a consequence of the $L^2(\Omega)$-convergence.
It is also worth noting that although a zero initial condition is considered in \cite{Dal99}, if the contribution from the initial data is deterministic and independent of the spatial variable, then property (S) can still be shown to hold by an argument similar to that in \cite[Lemma 4.2]{Ebi25}. 
\end{Rem}

For simplification, let us introduce the following notation: For $a,b \in \R$, $t\in [0,T]$, and $\mathbb{T}_k = (t_1, \ldots, t_k) \in [0,T]^k$,
\begin{align*}
    &F_{b}^a(t) \coloneqq \int_{a}^b (u(t,x) - \E[u(t,x)])dx, \quad \mathbb{F}^a_b(\mathbb{T}_k) \coloneqq (F_{b}^a(t_1), \ldots, F_{b}^a(t_k)), \quad \text{and} \quad \bF_b(\bT_k) \coloneqq \bF_b^0(\bT_k). 
\end{align*}

Using property (S) and the $L^2(\Omega)$-continuity of the solution $u$, we obtain the following invariance for its spatial average.
\begin{Lem}\label{Lem shift invariance}
Let $\{u(t,x)\}_{(t,x) \in [0,T] \times \R}$ be the solution to \eqref{SPDE}.
For every $k \in \N$, $\bT_k = (t_1, \ldots, t_k) \in [0,T]^k$, and $a, b \in \R$ with $a < b$, we have
\begin{equation}
    \bF_{b}^{a}(\bT_k) \stackrel{d}{=} \bF_{b+c}^{a+c}(\bT_k)
\end{equation}
for any $c \in \R$. In particular, the distribution of $\bF_{b+c}^{a+c}(\bT_k)$ does not depend on $c$. 
\end{Lem}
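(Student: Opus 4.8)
The plan is to approximate the spatial integrals by Riemann sums, which are finite affine combinations of $u$ evaluated at finitely many space--time points, apply the spatial shift-invariance \eqref{stg stationarity} at the level of those finite-dimensional vectors, and then pass to the limit using the $L^2(\Omega)$-continuity of $u$ to transfer the identity in law to $\bF_b^a(\bT_k)$.

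Concretely, fix $k\in\N$, $\bT_k=(t_1,\dots,t_k)$, $a<b$, and $c\in\R$. By Proposition \ref{Prp sol. wp}(ii) the process $\{u(t,x)\}_{x\in\R}$ is strictly stationary, so $m(t)\coloneqq\E[u(t,x)]$ does not depend on $x$, and it is finite by Proposition \ref{Prp sol. wp}(i). For $n\in\N$ set $x_j^{(n)}=a+j(b-a)/n$ for $j=0,\dots,n$ and define
\[
S_n(t)\coloneqq\frac{b-a}{n}\sum_{j=1}^{n}\bigl(u(t,x_j^{(n)})-m(t)\bigr),\qquad
\widetilde S_n(t)\coloneqq\frac{b-a}{n}\sum_{j=1}^{n}\bigl(u(t,x_j^{(n)}+c)-m(t)\bigr).
\]
Since by Proposition \ref{Prp sol. wp}(i) the map $x\mapsto u(t,x)$ is $L^2(\Omega)$-continuous, it is uniformly $L^2(\Omega)$-continuous on the compact intervals $[a,b]$ and $[a+c,b+c]$; hence the corresponding Riemann sums converge, i.e.\ $S_n(t)\to F_b^a(t)$ and $\widetilde S_n(t)\to F_{b+c}^{a+c}(t)$ in $L^2(\Omega)$ as $n\to\infty$. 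Consequently $(S_n(t_1),\dots,S_n(t_k))\to\bF_b^a(\bT_k)$ and $(\widetilde S_n(t_1),\dots,\widetilde S_n(t_k))\to\bF_{b+c}^{a+c}(\bT_k)$ in $L^2(\Omega;\R^k)$, and in particular in distribution.

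For each fixed $n$, the vectors $(S_n(t_1),\dots,S_n(t_k))$ and $(\widetilde S_n(t_1),\dots,\widetilde S_n(t_k))$ are obtained by applying one and the same deterministic affine map to the finite families $\{u(t_i,x_j^{(n)})\}_{1\le i\le k,\,1\le j\le n}$ and $\{u(t_i,x_j^{(n)}+c)\}_{1\le i\le k,\,1\le j\le n}$, respectively. By \eqref{stg stationarity}, applied simultaneously to all $kn$ space--time points, these two families have the same joint law, so $(S_n(t_1),\dots,S_n(t_k))\stackrel{d}{=}(\widetilde S_n(t_1),\dots,\widetilde S_n(t_k))$. Letting $n\to\infty$ and invoking uniqueness of weak limits yields $\bF_b^a(\bT_k)\stackrel{d}{=}\bF_{b+c}^{a+c}(\bT_k)$. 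The last assertion then follows by keeping $a,b$ fixed and observing that the law of $\bF_{b+c}^{a+c}(\bT_k)$ equals that of $\bF_b^a(\bT_k)$ for every $c\in\R$.

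I do not expect a genuine obstacle here. The only point that needs a little care is the convergence of the Riemann sums, which requires \emph{uniform} $L^2(\Omega)$-continuity on a compact interval rather than mere pointwise $L^2(\Omega)$-continuity; this is immediate from Proposition \ref{Prp sol. wp}(i) together with the Heine--Cantor theorem for the map $[a,b]\to L^2(\Omega)$. One should also note explicitly that \eqref{stg stationarity} is a statement about the full joint distribution of arbitrary finite families of evaluations of $u$, which is exactly what is needed to handle all $k$ times and all $n$ grid points at once.
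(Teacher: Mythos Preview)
Your proof is correct and follows essentially the same approach as the paper: approximate the spatial integrals by Riemann sums, invoke the joint stationarity \eqref{stg stationarity} at the level of the finite-dimensional vectors, and pass to the limit using uniform $L^2(\Omega)$-continuity on compacts. The paper's argument differs only cosmetically (it uses general partitions indexed by an $\varepsilon$-parameter rather than equispaced grids).
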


\begin{proof}
For each $i = 1, \ldots, k$, by Proposition \ref{Prp sol. wp}, $x \mapsto u(t_i, x)$ is uniformly $L^2(\Omega)$-continuous on compact sets of $\R$.  
Thus, for any $\varepsilon > 0$, there exists $\delta_{i, \varepsilon} > 0$ such that for any $x, x' \in \R$ with $|x - x'| < \delta_{i, \varepsilon}$, 
\begin{equation}
    \norm{u(t_i,x) - u(t_i, x')}_{L^2(\Omega)} < \varepsilon.
\end{equation}
For each $i$ and $\varepsilon$, let $a = r^{i, \varepsilon}_0 < r^{i, \varepsilon}_1 < \cdots < r^{i, \varepsilon}_{n_{i, \varepsilon}} = b$ be a partition of $[a,b]$ such that
\begin{equation}
    \max_{0 \leq j \leq n_{i, \varepsilon} - 1} |r^{i, \varepsilon}_j - r^{i, \varepsilon}_{j+1}| < \delta_{i, \varepsilon},
\end{equation}
and take $y_{j}^{i, \varepsilon} \in [r^{i, \varepsilon}_{j}, r^{i, \varepsilon}_{j+1}]$ arbitrarily for each $j$. 
Then, we can easily check that  
\begin{equation} 
    \left(\sum_{j=0}^{n_{1, \varepsilon} - 1} (u(t_1, y_j^{1, \varepsilon}) - \E[u(t_1, y_j^{1, \varepsilon})])\int_{r^{1, \varepsilon}_j}^{r^{1, \varepsilon}_{j+1}} dx, \ldots, \sum_{j=0}^{n_{k, \varepsilon} - 1} (u(t_k, y_j^{k, \varepsilon}) - \E[u(t_k, y_j^{k, \varepsilon})])\int_{r^{k, \varepsilon}_j}^{r^{k, \varepsilon}_{j+1}} dx \right) 
\end{equation}
converges in $L^2(\Omega; \R^k)$ to $\bF_b^a(\bT_k)$ as $\varepsilon \to 0$.
Now observe from \eqref{stg stationarity} that for any $c \in \R$, we have
\begin{align}
    &\left(\sum_{j=0}^{n_{1, \varepsilon} - 1} (u(t_1, y_j^{1, \varepsilon}) - \E[u(t_1, y_j^{1, \varepsilon})])\int_{r^{1, \varepsilon}_j}^{r^{1, \varepsilon}_{j+1}} dx, \ldots, \sum_{j=0}^{n_{k, \varepsilon} - 1} (u(t_k, y_j^{k, \varepsilon}) - \E[u(t_k, y_j^{k, \varepsilon})])\int_{r^{k, \varepsilon}_j}^{r^{k, \varepsilon}_{j+1}} dx \right) \\
    &\stackrel{d}{=} \left(\sum_{j=0}^{n_{1, \varepsilon} - 1} (u(t_1, y_j^{1, \varepsilon} - c) - \E[u(t_1, y_j^{1, \varepsilon} - c)] )\int_{r^{1, \varepsilon}_j}^{r^{1, \varepsilon}_{j+1}} dx, \ldots, \sum_{j=0}^{n_{k, \varepsilon} - 1} (u(t_k, y_j^{k, \varepsilon} - c) - \E[u(t_k, y_j^{k, \varepsilon} -c)])\int_{r^{k, \varepsilon}_j}^{r^{k, \varepsilon}_{j+1}} dx \right).
\end{align}
Since the right-hand side converges in $L^2(\Omega; \R^k)$ to $\bF_{b+c}^{a+c}(\bT_k)$ as $\varepsilon \to 0$, we can conclude that $F_{b}^{a}(\bT_k) \stackrel{d}{=} F_{b+c}^{a+c}(\bT_k)$, and the lemma follows.
\end{proof}

\subsection{Malliavin calculus}
Here we introduce some elements of the Malliavin calculus based on the Gaussian process $W = \{W(h) \mid h \in \mathcal{H}_T\}$ defined in Section \ref{subsection Solutions to stochastic heat and wave equations}.
For much more information, the reader is referred to \cite{Nualart_book}. 

We say that $F\colon \Omega \to \R$ is a smooth cylindrical random variable with respect to $W$ if $F$ is of the form $F=f(W(h_1),\ldots, W(h_k))$ for some $k \in \N$, $h_1, \ldots, h_k \in \mathcal{H}_T$, and a smooth function $f\colon \R^k \to \R$ such that $f$ and all orders of its partial derivatives are at most of the polynomial growth. 

We will write $D$ for the Malliavin derivative operator. 
If $F=f(W(h_1),\ldots, W(h_k))$ is a smooth cylindrical random variable, then the Malliavin derivative $DF$ is the $\mathcal{H}_T$-valued random variable given by
\begin{equation*}
    DF = \sum_{i=1}^k\frac{\partial f}{\partial x_i}(W(h_1),\ldots, W(h_k))h_i.
\end{equation*}
The operator $D$ is a closed operator from $L^p(\Omega)$ into $L^p(\Omega;\mathcal{H}_T)$ for any $p \in [1, \infty)$, and the domain of $D$ in $L^p(\Omega)$ is the Sobolev space $\mathbb{D}^{1,p}$ defined as the closure of the set of all smooth cylindrical random variables in $L^p(\Omega)$ under the norm 
\begin{equation*}
    \lVert F \rVert_{1,p} \coloneqq ( \mathbb{E}[|F|^p] + \mathbb{E}[\lVert DF \rVert_{\mathcal{H}_T}^p] )^{\frac{1}{p}}.
\end{equation*}

For any $F \in \mathbb{D}^{1,p}$ and a Lipschitz function $\phi \colon \R \to \R$, $\phi(F)$ belongs to $\mathbb{D}^{1,p}$ and we have the following chain rule: there exists a real-valued random variable $\Phi_F$ with $|\Phi_F| \leq \mathrm{Lip}(\phi)$ almost surely such that 
\begin{equation*}
    D \phi(F) = \Phi_{F} DF.
\end{equation*}
If $\phi$ is a continuously differentiable function with bounded derivative, we have $\Phi_{F} = \phi'(F)$.

Let $F,G \in \mathbb{D}^{1,2}$. 
If the derivatives $DF$ and $DG$ have good enough versions $D_{t,x}F$, $D_{t,x}G$ which are measurable functions on $[0,T] \times \R \times \Omega$, then we can derive from the Clark-Ocone formula that 
\begin{equation}\label{Covariance bound}
    |\mathrm{Cov}(F, G)|
    \leq \int_0^T \int_{\R^{2}}\lVert D_{s,y}F \rVert_{L^2(\Omega)}\lVert D_{s,z}G \rVert_{L^2(\Omega)}\Gamma(y-z)dydzds.
\end{equation}
We will use this to obtain the covariance estimates for the spatial averages of the solution in Section \ref{section Covariance estimates}.

\subsection{Tools from large deviation theory}
This section collects some basic notions and tools from the large deviation theory. 
See \cite{Dembo-Zeitouni} for more details.

Let $\cX$ be a topological space and $\cB(\cX)$ be its Borel $\sigma$-algebra. 
A function $I \colon \cX \to [0, \infty]$ is called a rate function if $I$ is lower semicontinuous. 
We say that a family of probability measures $\{\nu_R\}_{R>0}$ on $(\cX, \mathcal{B}(\cX))$ satisfies the LDP with speed $R$ and a rate function $I$ if, for all $A \in \mathcal{B}(\cX)$, 
\begin{equation}
    -\inf_{x \in A^{o}} I(x) \leq \liminf_{R \to \infty} \frac{1}{R}\log \nu_R(A) \leq \limsup_{R \to \infty}\frac{1}{R}\log \nu_R(A) \leq - \inf_{x \in \Bar{A}} I(x),
\end{equation}
where $A^{o}$ and $\Bar{A}$ denote the interior and the closure of $A$, respectively.
A rate function $I$ is said to be a good rate function if, for all $r \in [0,\infty)$, $I^{-1}([0,r])$ is compact.  
A family $\{\nu_R\}_{R>0}$ is called exponentially tight if for every $\alpha < \infty$, there exists a compact set $K_{\alpha} \subset \cX$ such that
\begin{equation}
    \limsup_{R \to \infty} \frac{1}{R}\log \nu_R(K_{\alpha}^{c}) < -\alpha.
\end{equation}
Let us recall the definition of a well-separating class given in \cite[Definition 4.4.7]{Dembo-Zeitouni}. 
A class $\cG$ of continuous real-valued functions on $\cX$ is called well-separating if it satisfies the following:
\begin{enumerate}
    \item[(i)] $\mathcal{G}$ contains the all  constant functions.
    \item[(ii)] If $g_1, g_2 \in \cG$, then $g_1 \land g_2 \in \cG$, where $g_1 \land g_2(x) \coloneqq \min{\{g_1(x), g_2(x)\}}$.
    \item[(iii)] For any $x,y \in \cX$ with $x \neq y$ and $a,b \in \R$, there exists $g \in \cG$ such that $g(x) = a$ and $g(y) = b$.
\end{enumerate}
For instance, when $\cX$ is a locally convex Hausdorff topological vector space, the class of all continuous, bounded above, concave functions on $\cX$ is well-separating, as shown in \cite[Lemma 4.4.8]{Dembo-Zeitouni}.

In Section \ref{section LDP for the finite-dimensional distributions}, we establish the LDP for $\{\mu_R^{\mathbb{T}_k}\}_{R>0}$ by using the following form of inverse Varadhan's lemma.
Let $C_{\mathrm{b}}(\R^k)$ be the space of continuous bounded functions on $\R^k$.

\begin{Prp}[{\cite[Theorems 4.4.2 and 4.4.10]{Dembo-Zeitouni}}]
\label{Prp Inverse Varadhan's lemma}
Let $\{\nu_R\}_{R>0}$ be a family of probability measures on $\R^k$. 
Let $\mathcal{G}(\R^k)$ be a well-separating class on $\R^k$.
If $\{\nu_R\}_{R>0}$ is exponentially tight and the limit
\begin{equation}
\label{log mom gen limit}
    \Lambda_g \coloneqq \lim_{R \to \infty} \frac{1}{R}\log \left(\int_{\R^k} e^{Rg(x)}\nu_R(dx)\right)
\end{equation}
exists for all $g \in \mathcal{G}(\R^k)$, then $\Lambda_f$ exists for each $f \in C_{\mathrm{b}}(\R^k)$, and $\{\nu_R\}_{R>0}$ satisfies the LDP with speed $R$ and the good rate function 
\begin{equation*}
    I(x) = \sup_{f\in C_{\mathrm{b}}(\R^k)}\{f(x) - \Lambda_f\}.
\end{equation*}

\end{Prp}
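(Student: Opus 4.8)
The plan is to deduce the statement by chaining together two classical results from large deviation theory: Bryc's inverse of Varadhan's lemma and the reduction of its hypothesis to a well-separating class, both contained in \cite[Section 4.4]{Dembo-Zeitouni}. The starting point is Bryc's lemma: if $\{\nu_R\}_{R>0}$ is exponentially tight and the limit $\Lambda_f$ in \eqref{log mom gen limit} exists for \emph{every} $f \in C_{\mathrm b}(\R^k)$, then $\{\nu_R\}_{R>0}$ satisfies the LDP with the good rate function $I(x) = \sup_{f \in C_{\mathrm b}(\R^k)}\{f(x) - \Lambda_f\}$, and moreover $\Lambda_f = \sup_{x}\{f(x) - I(x)\}$ for all such $f$, by Varadhan's lemma applied to the resulting LDP. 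Thus the whole task reduces to upgrading the hypothesis from ``$\Lambda_g$ exists for all $g$ in the well-separating class $\cG(\R^k)$'' to ``$\Lambda_f$ exists for all $f \in C_{\mathrm b}(\R^k)$''.

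To carry out this upgrade I would use a subsequence selection principle. Exponential tightness makes $\{\nu_R\}_{R>0}$ precompact for the LDP: every subsequence admits a further subsequence along which the LDP holds with some good rate function $J$ (a standard consequence of exponential tightness; see, e.g., \cite[Lemma 4.1.23]{Dembo-Zeitouni}). Along any such subsequence, Varadhan's lemma gives, for every bounded continuous $g$ and in particular for every $g \in \cG(\R^k)$, that the corresponding sublimit of $R^{-1}\log\int_{\R^k} e^{Rg}\,d\nu_R$ equals $\sup_{x}\{g(x) - J(x)\}$. Since by hypothesis the full limit $\Lambda_g$ exists for $g \in \cG(\R^k)$, this yields $\sup_{x}\{g(x) - J(x)\} = \Lambda_g$ for every $g \in \cG(\R^k)$ — a constraint on $J$ independent of the chosen subsequence. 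The three defining properties of a well-separating class (it contains the constants, is closed under binary minima, and realizes any prescribed pair of values at any two distinct points) are exactly what is needed to guarantee that a good rate function is \emph{uniquely} determined by the collection $\{\sup_x\{g - J\} : g \in \cG(\R^k)\}$. Hence all subsequential rate functions coincide with one and the same good rate function $J$, and the usual subsequence argument then promotes this to an LDP for $\{\nu_R\}_{R>0}$ itself with rate function $J$. With the LDP in hand, Varadhan's lemma yields the existence of $\Lambda_f$ for all $f \in C_{\mathrm b}(\R^k)$ together with the identity $J = \sup_f\{f - \Lambda_f\}$, which is the asserted form of the rate function.

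The step I expect to be the crux — were one to prove this from scratch rather than cite it — is the uniqueness claim: if $J_1, J_2$ are good rate functions with $\sup_x\{g(x) - J_1(x)\} = \sup_x\{g(x) - J_2(x)\}$ for all $g \in \cG(\R^k)$, then $J_1 = J_2$. Assuming, say, $J_1(x_0) < J_2(x_0)$ at some point, one must manufacture from the well-separating properties a function $g \in \cG(\R^k)$ that is large at $x_0$ yet small on a level set of $J_2$ — which is compact by goodness — thereby producing $\sup_x\{g - J_1\} > \sup_x\{g - J_2\}$, a contradiction; this is where goodness and all three structural properties of $\cG(\R^k)$ are genuinely used. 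That argument, together with the selection principle above, is precisely the content of \cite[Theorems 4.4.2 and 4.4.10]{Dembo-Zeitouni}, so in the paper it is enough to invoke those results.
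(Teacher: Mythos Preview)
Your proposal is correct and accurately outlines the content of \cite[Theorems 4.4.2 and 4.4.10]{Dembo-Zeitouni}; the paper itself does not supply a proof of this proposition but simply cites those theorems, so your sketch is in fact more detailed than what the paper provides.
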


In this paper, we will use the specific well-separating class $\mathcal{G}_{\mathrm{BLC}}(\R^k)$ on $\R^k$. 
\begin{Lem}[\textit{cf.} {\cite[Lemma 4.4.8]{Dembo-Zeitouni}}]
\label{Lem G blc}
The class $\mathcal{G}_{\mathrm{BLC}}(\R^k)$ of all Lipschitz continuous, bounded above, concave functions on $\R^k$ is well-separating.
\end{Lem}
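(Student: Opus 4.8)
The plan is to verify directly the three defining properties of a well-separating class from \cite[Definition 4.4.7]{Dembo-Zeitouni}, following the template of \cite[Lemma 4.4.8]{Dembo-Zeitouni} but keeping track of Lipschitz continuity throughout. Property (i) is immediate: every constant function is Lipschitz, concave, and bounded above, hence lies in $\mathcal{G}_{\mathrm{BLC}}(\R^k)$.

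For property (ii), suppose $g_1, g_2 \in \mathcal{G}_{\mathrm{BLC}}(\R^k)$. The pointwise minimum of two concave functions is concave, and the minimum of two functions bounded above is bounded above. For the Lipschitz bound one uses the elementary inequality $|\min(a,b) - \min(c,d)| \le \max(|a-c|, |b-d|)$ for reals, which gives $\mathrm{Lip}(g_1 \land g_2) \le \max\{\mathrm{Lip}(g_1), \mathrm{Lip}(g_2)\}$. Hence $g_1 \land g_2 \in \mathcal{G}_{\mathrm{BLC}}(\R^k)$.

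The only point requiring an idea is property (iii): given $x \neq y$ in $\R^k$ and $a, b \in \R$, one must produce $g \in \mathcal{G}_{\mathrm{BLC}}(\R^k)$ with $g(x) = a$ and $g(y) = b$. Since $x \neq y$, the vector $v \coloneqq y - x$ satisfies $v \cdot v = |y-x|^2 > 0$, so the affine function
\begin{equation*}
    h(z) \coloneqq a + \frac{b-a}{|y-x|^2}\, v \cdot (z - x), \qquad z \in \R^k,
\end{equation*}
is well defined, Lipschitz, and concave (being affine), and satisfies $h(x) = a$, $h(y) = b$. Since $h$ fails to be bounded above unless $a = b$, I would truncate: set $M \coloneqq \max\{a, b\}$ and $g \coloneqq h \land M$. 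Then $g$ is concave as a minimum of two concave functions, bounded above by $M$, and Lipschitz with $\mathrm{Lip}(g) \le \mathrm{Lip}(h)$ by the inequality used in step (ii); moreover $g(x) = a \land M = a$ and $g(y) = b \land M = b$ because $M \ge a$ and $M \ge b$. Thus $g \in \mathcal{G}_{\mathrm{BLC}}(\R^k)$ has the prescribed values.

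Having checked (i)--(iii), $\mathcal{G}_{\mathrm{BLC}}(\R^k)$ is well-separating. I do not expect any genuine obstacle: the statement is essentially the finite-dimensional specialization of \cite[Lemma 4.4.8]{Dembo-Zeitouni}, and the only added content is the bookkeeping that the separating function may be taken Lipschitz (needed so that the truncation $h \land M$ stays in the class), which is exactly what the elementary min-inequality delivers.
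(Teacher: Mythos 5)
Your proof is correct and follows essentially the same route as the paper, which simply appeals to \cite[Lemma 4.4.8]{Dembo-Zeitouni} after observing that the pointwise minimum of two Lipschitz functions is Lipschitz. You spell out the full argument (affine function truncated by the constant $\max\{a,b\}$ for property (iii), and the elementary min-inequality for the Lipschitz bound in (ii)), but there is no substantive difference from what the cited reference and the paper's one-line remark deliver.
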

\begin{proof}
The same argument as in \cite[Lemma 4.4.8]{Dembo-Zeitouni} works since the pointwise minimum of two Lipschitz continuous functions is again a Lipschitz continuous function.
\end{proof}

If the rate function $I$ in Proposition \ref{Prp Inverse Varadhan's lemma} is convex, we can show under a mild assumption that $I$ is the Fenchel-Legendre transform of 
\begin{equation}
\label{Lambda def}
    \Lambda(\lambda) \coloneqq \lim_{R \to \infty}\frac{1}{R}\log \left(\int_{\R^k} e^{R  \lambda \cdot x}\nu_R(dx)\right).
\end{equation}

\begin{Prp}[{\cite[Theorem 4.5.10]{Dembo-Zeitouni}}]
\label{Prp convexity and duality}
Let $\{\nu_R\}_{R>0}$ be a family of probability measures on $\R^k$ such that
\begin{equation*}
    \limsup_{R \to \infty} \frac{1}{R}\log \left(\int_{\R^k} e^{R \lambda \cdot x}\nu_R(dx)\right)< \infty, \quad \text{for any $\lambda \in \R^k$}.
\end{equation*}
If $\{\nu_R\}_{R>0}$ satisfies the LDP with speed $R$ and a good rate function $I$, then for each $\lambda \in \R^k$, the limit \eqref{Lambda def} exists, is finite, and satisfies $\Lambda(\lambda) = \sup_{x \in \R^k}\{ \lambda \cdot x - I(x)\}$.
Moreover, if the rate function $I$ is convex, then it is the Fenchel-Legendre transform of $\Lambda$. 
That is, for all $x \in \R^k$, 
\begin{equation*}
    I(x) = \sup_{\lambda \in \R^k}\{ \lambda \cdot x - \Lambda(\lambda)\}.
\end{equation*}
\end{Prp}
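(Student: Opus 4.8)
The plan is to deduce the proposition from two classical facts: Varadhan's integral lemma for the first assertion, and the Fenchel--Moreau biconjugation theorem for the second.

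\emph{Existence of the limit and the identity $\Lambda(\lambda)=\sup_{x}\{\lambda\cdot x - I(x)\}$.} Fix $\lambda\in\R^k$ and apply Varadhan's integral lemma \cite[Theorem 4.3.1]{Dembo-Zeitouni} to the continuous function $F(x)=\lambda\cdot x$. The only nontrivial hypothesis to verify is the moment condition $\limsup_{R\to\infty}R^{-1}\log\int_{\R^k}e^{\gamma R F}\,d\nu_R<\infty$ for some $\gamma>1$; but this is precisely the standing hypothesis of the proposition applied to the vector $\gamma\lambda\in\R^k$. Since $\{\nu_R\}_{R>0}$ satisfies the LDP with the \emph{good} rate function $I$, Varadhan's lemma gives that $R^{-1}\log\int_{\R^k}e^{RF}\,d\nu_R$ converges, with limit $\sup_{x\in\R^k}\{F(x)-I(x)\}$; that is, $\Lambda(\lambda)$ exists and equals $\sup_{x}\{\lambda\cdot x-I(x)\}$, the convex conjugate $I^{*}(\lambda)$. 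Finiteness from above is immediate from the hypothesis. For finiteness from below, one observes that applying the LDP to $A=\R^k$ forces $\inf_{x}I(x)=0$; since $I$ is a good rate function, this infimum is attained at some $x_0$ (the restriction of the lower semicontinuous $I$ to the compact level set $\{I\le 1\}$ attains its minimum), and hence $\Lambda(\lambda)\ge\lambda\cdot x_0>-\infty$.

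\emph{The convex case.} Assume in addition that $I$ is convex. As a rate function, $I$ is lower semicontinuous, and it is proper since $\inf_{x}I(x)=0<\infty$. By the Fenchel--Moreau theorem, a proper lower semicontinuous convex function on $\R^k$ equals its biconjugate, $I=(I^{*})^{*}$. Substituting $I^{*}=\Lambda$ from the previous step yields $I(x)=\sup_{\lambda\in\R^k}\{\lambda\cdot x-\Lambda(\lambda)\}$ for all $x\in\R^k$, which is the desired formula.

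I do not expect a genuine obstacle: the content is a repackaging of standard convex-analytic and large-deviation facts, which is why the statement is simply cited from \cite{Dembo-Zeitouni}. The only points requiring a little care are to invoke the moment hypothesis at $\gamma\lambda$ for some $\gamma>1$ (rather than at $\lambda$ itself), so that the hypotheses of Varadhan's lemma are literally met, and to record the elementary fact that the full-space LDP bound forces $\inf_{x}I(x)=0$, which is exactly what makes $\Lambda$ finite and $I$ proper.
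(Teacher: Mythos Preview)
Your argument is correct. The paper does not give its own proof of this proposition: it is stated as a citation of \cite[Theorem 4.5.10]{Dembo-Zeitouni} and used as a black box. Your sketch---Varadhan's lemma applied to the linear functional $x\mapsto \lambda\cdot x$, with the moment condition verified at $\gamma\lambda$ for some $\gamma>1$, followed by Fenchel--Moreau biconjugation once $I$ is known to be proper, lower semicontinuous, and convex---is precisely the standard route and is essentially how the result is obtained in \cite{Dembo-Zeitouni}. The small care points you flag (using $\gamma\lambda$ rather than $\lambda$, and extracting $\inf_x I(x)=0$ from the full-space LDP bound to get properness and finiteness of $\Lambda$) are handled correctly.
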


In Section \ref{subsection Existence of the limit}, we will use the following result shown in \cite{Hammersley} to ensure the existence of a limit \eqref{log mom gen limit}. 

\begin{Lem}[Approximate subadditivity, \  {\cite[Theorem 2]{Hammersley}}] 
\label{Lem approximate subadditivity}
Let $\zeta \colon (0,\infty) \to \R$ be a non-decreasing function on $[\xi, \infty)$ for some $\xi > 0$  with $\dis \int_{\xi}^{\infty}\frac{\zeta(x)}{x^2}dx < \infty$.
If $f \colon (0,\infty) \to \R$ satisfies 
\begin{equation*}
    f(x+y) \leq f(x) + f(y) + \zeta(x+y), \quad x,y \geq c_f,
\end{equation*}
for some $c_f >0$ which may depend on $f$, then the limit $\dis \Bar{f} = \lim_{x \to \infty} \frac{f(x)}{x} \in [-\infty, \infty)$ exists.
\end{Lem}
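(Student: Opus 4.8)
The plan is to mimic the classical proof of Fekete's subadditivity lemma, absorbing the correction term $\zeta$ into the convergent integral $\int_\xi^\infty \zeta(x)x^{-2}\,dx$ by iterating the hypothesis \emph{dyadically} rather than arithmetically. First, three reductions: (i) we may assume $\zeta \geq 0$, since replacing $\zeta$ by $\zeta \lor 0$ preserves monotonicity and, because $\zeta$ is non-decreasing, still has $\int_\xi^\infty (\zeta(x)\lor 0)x^{-2}\,dx < \infty$; (ii) enlarging $c_f$ we may assume $c_f \geq \xi$; (iii) we may assume $f$ is bounded above on every bounded subinterval of $(0,\infty)$, which holds automatically in our applications where $f$ is continuous. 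A consequence of the integrability, used below, is that $\zeta(x)/x \to 0$, since $\zeta(x)/(2x) = \zeta(x)\int_x^{2x}t^{-2}\,dt \leq \int_x^{2x}\zeta(t)t^{-2}\,dt \to 0$.

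The heart of the argument is a dyadic estimate. For $y \geq c_f$ and $m \geq 1$, a straightforward induction on $m$ using the hypothesis gives
\[ f(2^m y) \leq 2^m f(y) + \sum_{j=1}^m 2^{m-j}\zeta(2^j y). \]
Dividing by $2^m y$ and using $\zeta(2^j y) \leq 2^{j+1}y\int_{2^j y}^{2^{j+1}y}\zeta(t)t^{-2}\,dt$ (monotonicity of $\zeta$), we get $f(2^m y)/(2^m y) \leq f(y)/y + \varepsilon(y)$ for all $m \geq 0$, where $\varepsilon(y) \coloneqq 2\int_{2y}^\infty \zeta(t)t^{-2}\,dt \to 0$ as $y\to\infty$. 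More generally, writing $X = 2^N z$ with $z = X/2^N \geq c_f$, the same computation yields $f(X)/X \leq f(z)/z + \varepsilon(z)$. Choosing $N$ maximal with $z \in [c_f, 2c_f)$ and using reduction (iii) shows $f$ is \emph{linearly bounded above}: $f(x) \leq M_0 x$ for $x \geq c_f$, with $M_0 \coloneqq \sup_{w\in[c_f,2c_f)}f(w)/w + \varepsilon(c_f) < \infty$; and taking $\liminf_{m\to\infty}$ in the dyadic bound shows $\bar f \coloneqq \liminf_{x\to\infty}f(x)/x$ equals $\inf_{y\geq c_f}\big(f(y)/y + \varepsilon(y)\big) \in [-\infty,\infty)$.

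It then remains to prove $\limsup_{x\to\infty}f(x)/x \leq \bar f$, which I would obtain by bootstrapping the linear bound through a \emph{balanced} decomposition. Fix $\delta > 0$ and pick $y^* \geq 2c_f$ with $f(y^*)/y^* + \varepsilon(y^*) < \bar f + \delta$ (arbitrarily negative if $\bar f = -\infty$); then $f(2^m y^*) \leq (\bar f + \delta)2^m y^*$ for all $m \geq 0$. Given $X$ large, let $P$ be the unique point of $\{2^m y^* : m \geq 0\}$ lying in $[X/4, X/2)$ and $Q = X - P \in (X/2, 3X/4]$, so $P, Q \geq X/4 \geq c_f$ and $f(X) \leq f(P) + f(Q) + \zeta(X)$. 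Using $f(P) \leq (\bar f+\delta)P$, the current linear bound $f(Q) \leq (M_n+\delta)Q$ (valid once $X$ is large, since $Q \geq X/4$ exceeds the relevant threshold), and $\zeta(X)/X \to 0$, together with $P/X \in [1/4,1/2)$ and $M_n \geq \bar f$, one finds $f(X)/X \leq (M_n+\delta) - (P/X)(M_n-\bar f) + o(1) \leq \tfrac34 M_n + \tfrac14\bar f + \delta + o(1)$. Hence $\limsup_{x\to\infty} f(x)/x \leq M_{n+1} \coloneqq \tfrac34 M_n + \tfrac14\bar f + \delta$; iterating gives $M_n \to \bar f + 4\delta$, so $\limsup \leq \bar f + 4\delta$. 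Letting $\delta \downarrow 0$ yields $\limsup \leq \bar f \leq \liminf$, so the limit exists and equals $\bar f$. (If $\bar f = -\infty$, the same scheme with $\bar f + \delta$ replaced by an arbitrarily large negative constant gives $\limsup = -\infty$.)

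I expect the passage from the geometric subsequence $\{2^m y\}$ to all $x \to \infty$ to be the main obstacle. The dyadic iteration is essential because it is the only scheme in which the accumulated $\zeta$-errors receive geometric weights $2^{-j}$ and are thereby controlled by $\int \zeta(t)t^{-2}\,dt$; splitting $X$ instead into $\sim \log X$ nearly equal summands accumulates an error $\sim (\log X)\zeta(X)$, which — given only $\zeta(x)/x \to 0$ — need not be $o(X)$. Reconciling the geometric reach with the need to hit every $X$ forces the two-scale balancing bootstrap above, and it is also where reduction (iii) is genuinely needed: without some regularity the statement is false (a non-measurable additive $f$ satisfies the hypothesis with $\zeta \equiv 1$ yet $f(x)/x$ has no limit), so $f$ must implicitly be, say, Borel measurable — harmless here since the relevant $f$ are continuous.
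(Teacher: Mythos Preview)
The paper does not prove this lemma; it is quoted verbatim from Hammersley and used as a black box. So there is no in-paper argument to compare against, and your proposal supplies what the paper omits.

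Your proof is essentially correct. The dyadic iteration $f(2^m y)/(2^m y) \leq f(y)/y + 2\int_{2y}^{\infty}\zeta(t)t^{-2}\,dt$ is the right engine, and the balanced bootstrap $X = P + Q$ with $P \in \{2^m y^*\}\cap[X/4,X/2)$ is a clean way to pass from the geometric subsequence to the full limit; the recursion $M_{n+1} = \tfrac34 M_n + \tfrac14\bar f + \delta$ converges to $\bar f + 4\delta$ exactly as you claim. Reductions (i) and (ii) are unproblematic.

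You are also right that reduction (iii) is not a reduction but an \emph{added hypothesis}: the lemma as stated is false for a non-measurable additive $f$ with $\zeta\equiv 0$, so some regularity on $f$ is implicitly assumed (Hammersley's original formulation carries such a proviso). This does no harm here. In the paper's sole application the function is $R\mapsto -\log H_R^{\mathbb{T}_k}(g)$ with $g<0$, which is nonnegative (since $H_R^{\mathbb{T}_k}(g)\leq 1$) and satisfies $-\log H_R^{\mathbb{T}_k}(g)\leq m_g R + \log 2$ for large $R$ by the lower bound $H_R^{\mathbb{T}_k}(g)\geq\tfrac12 e^{-m_g R}$ established just before; hence it is certainly bounded above on bounded intervals. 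Your caveat is therefore satisfied where the lemma is invoked, and your identification of the missing regularity hypothesis is a genuine clarification rather than a gap in your argument.
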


We also record the large deviations result for projective limits by Dawson and G\"{a}rtner \cite{Dawson-Gartner} in a particular form that can be applied to prove Theorem \ref{Thm main 2}.
For the general result and more details, we refer to \cite[Section 4.6]{Dembo-Zeitouni}.

Let 
\begin{equation}
    \mathcal{J} \coloneqq \bigcup_{k \in \N} \{(t_1, \ldots, t_k) \in \R^{k} \mid 0 \leq t_1 < \cdots < t_k \leq T\}.
\end{equation}
That is, for each $j \in \mathcal{J}$, there exists $k\in \N$ such that $j=(t_i, \ldots, t_k)$ with $0 \leq t_1< \cdots < t_k \leq T$, and we write $|j|$ for this $k \in \N$ for simplicity. 
We define a partial order $\preceq$ on $\mathcal{J}$ as follows: For $i = (s_1, \ldots, s_{|i|}) \in \mathcal{J}$ and $j= (t_1, \ldots, t_{|j|}) \in \mathcal{J}$, $i \preceq j$ if and only if $\{s_1, \ldots, s_{|i|}\} \subset \{t_1, \ldots, t_{|j|}\}$.
When $i \preceq j$, there exists an injection $a \colon \{1, \ldots, |i|\} \to \{1, \ldots, |j|\}$ and we define the projection $p_{ij}$ by 
\begin{equation}
    p_{ij} \colon \R^{|j|} \to \R^{|i|}, \quad (x_1, \ldots, x_{|j|}) \mapsto (x_{a(1)}, \ldots, x_{a(|i|)}).
\end{equation}
Then, $(\mathcal{J}, \preceq)$ is a partially ordered right-filtering set and $((\R^{|j|})_{j \in \mathcal{J}}, (p_{ij})_{i \preceq j \in \mathcal{J}})$ is a projective system. 
The projective limit $\varprojlim \R^{|j|}$ of the system is given by
\begin{equation}
    \varprojlim \R^{|j|} \coloneqq \left\{(x_j)_{j \in \mathcal{J}} \in \prod_{j \in \mathcal{J}}\R^{|j|} \relmiddle{|} x_i = p_{ij}(x_j) \quad \text{for every $i,j \in \mathcal{J}$ with $i \preceq j$} \right\},
\end{equation}
and the topology on $\varprojlim \R^{|j|}$ is the topology induced by the product topology of $\prod_{j \in \mathcal{J}}\R^{|j|}$.
For each $j \in \mathcal{J}$, we write $p_j \colon \varprojlim \R^{|j|} \to \R^{|j|}$ for the canonical projection.

Let $\R^{[0,T]}$ be the product space and let $\widetilde{p}_j \colon \R^{[0,T]} \to \R^{|j|}$ be the map given by $\widetilde{p}_j(f) = (f(t_1), \ldots, f(t_{|j|}))$ for each $j = (t_1, \ldots, t_{|j|}) \in \mathcal{J}$.
Then, $\Phi \colon \R^{[0,T]} \to \varprojlim \R^{|j|}$ defined by $\Phi(f) = (\widetilde{p}_{j}(f))_{j \in \mathcal{J}}$ gives a homeomorphism, and we can identify $\varprojlim \R^{|j|}$ with $\R^{[0,T]}$ and $p_j$ with $\widetilde{p}_j$. 
Moreover, a Borel measure $\nu$ on $\R^{[0,T]}$ can naturally be regarded as a Borel measure on $\varprojlim \R^{|j|}$.
This identification allows us to use the results of large deviations for Borel probability measures on the projective limit $\varprojlim \R^{|j|}$ as results for Borel probability measures on $\R^{[0,T]}$.

\begin{Prp}\label{Prp proj gene}
Let $\{\nu_R\}_{R>0}$ be a family of probability measures on $(\R^{[0,T]}, \mathcal{B}(\R^{[0,T]}))$.
If, for every $j \in \cJ$, the family of Borel probability measures $\{\nu_R \circ \widetilde{p}_j^{\: -1}\}_{R>0}$ on $\R^{|j|}$ satisfies the LDP with the good rate function $I_j$, then $\{\nu_R\}_{R>0}$ satisfies the LDP with the good rate function 
\begin{equation}
    I(f) = \sup_{j \in \cJ}\left\{I_j(\widetilde{p}_j(f))\right\}, \qquad f\in \R^{[0,T]}.
\end{equation}
\end{Prp}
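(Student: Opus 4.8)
The plan is to deduce this directly from the Dawson--Gärtner theorem on large deviations for projective limits (\cite{Dawson-Gartner}; see also \cite[Theorem~4.6.1]{Dembo-Zeitouni}), using the identification of $\R^{[0,T]}$ with $\varprojlim \R^{|j|}$ set up in the discussion above. Recall the content of that theorem in the present notation: since $(\cJ, \preceq)$ is a partially ordered right-filtering set and $((\R^{|j|})_{j \in \cJ}, (p_{ij})_{i \preceq j})$ is a projective system, if a family $\{\mu_R\}_{R>0}$ of Borel probability measures on $\varprojlim \R^{|j|}$ has the property that, for every $j \in \cJ$, the pushforward family $\{\mu_R \circ p_j^{-1}\}_{R>0}$ satisfies the LDP on $\R^{|j|}$ with a good rate function $I_j$, then $\{\mu_R\}_{R>0}$ itself satisfies the LDP on $\varprojlim \R^{|j|}$ with the good rate function $x \mapsto \sup_{j \in \cJ} I_j(p_j(x))$.

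First I would transport the given measures $\nu_R$ from $\R^{[0,T]}$ to $\varprojlim \R^{|j|}$ via the homeomorphism $\Phi(f) = (\widetilde{p}_j(f))_{j \in \cJ}$; as recorded above, this transport sends Borel measures to Borel measures and identifies the canonical projection $p_j$ with the evaluation map $\widetilde{p}_j$, so that $\nu_R \circ p_j^{-1}$ and $\nu_R \circ \widetilde{p}_j^{-1}$ coincide as measures on $\R^{|j|}$. Under this identification the hypothesis of the proposition says precisely that, for every $j \in \cJ$, the family $\{\nu_R \circ p_j^{-1}\}_{R>0}$ satisfies the LDP on $\R^{|j|}$ with the good rate function $I_j$, so the Dawson--Gärtner theorem applies verbatim and yields the LDP for $\{\nu_R\}_{R>0}$ on $\varprojlim \R^{|j|}$ with good rate function $x \mapsto \sup_{j \in \cJ} I_j(p_j(x))$. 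It then remains only to read this back on $\R^{[0,T]}$: since $\Phi$ is a homeomorphism, the LDP transports, the goodness of the rate function is preserved, and under $\Phi$ the function $x \mapsto \sup_j I_j(p_j(x))$ becomes $f \mapsto \sup_{j \in \cJ} I_j(\widetilde{p}_j(f))$, which is the asserted $I$.

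I do not expect a genuine obstacle here: the mathematical substance is entirely contained in the cited projective-limit LDP, and the work is organizational --- confirming that $(\cJ,\preceq)$ is right-filtering (given $i,j$, list the union of their coordinates in increasing order to obtain a common upper bound), that each evaluation map $\widetilde{p}_j$ is continuous for the product topology on $\R^{[0,T]}$ (immediate, each coordinate functional being continuous), and that $\Phi$ respects the Borel structures, all of which were arranged in the paragraphs preceding the statement. The one point deserving slight attention is that $\cJ$ indexes \emph{all} finite subsets of $[0,T]$ (it does, since each $j \in \cJ$ has strictly increasing entries), so that the projective limit genuinely reconstructs $\R^{[0,T]}$ and not a proper subspace.
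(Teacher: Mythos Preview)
Your proposal is correct and matches the paper's approach: the proposition is stated as a direct consequence of the Dawson--G\"artner projective-limit LDP, read through the homeomorphism $\Phi \colon \R^{[0,T]} \to \varprojlim \R^{|j|}$ set up in the preceding paragraphs, and the paper gives no separate proof beyond that identification. Your organizational checks (right-filtering of $\cJ$, continuity of the $\widetilde{p}_j$, transport of the LDP and goodness through the homeomorphism) are exactly the points implicit in the paper's discussion.
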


\section{Covariance estimates}
\label{section Covariance estimates}
For the rest of the paper, we proceed under Assumption \ref{Assum 1}.
We also recall that the notational convention introduced at the end of Section \ref{section Introduction} applies throughout the paper. 
Unless otherwise specified, we treat the cases of the stochastic heat and wave equations simultaneously in what follows.

We use the Malliavin calculus tool to bound  covariance functions.
If $\sigma$ is a continuously differentiable Lipschitz function, it is known that the solution $u(t,x)$ to \eqref{SPDE} belongs to $\mathbb{D}^{1,p}$ for any $(t,x) \in [0,T]\times \R$ and $p \in [1,\infty)$.
Moreover, we can take a version $D_{s,y}u(t,x)$ of the derivative $Du(t,x)$ such that $D_{s,y}u(t,x)$ is 
jointly measurable in the variables $(t,x,s,y,\omega)$ and satisfies
\begin{equation}
\label{sol derivative}
    D_{s,y}u(t,x) = G(t-s, x-y)\sigma(u(s,y)) + \int_{s}^t\int_{\R}G(t-r, x-z)\sigma'(u(r,z))D_{s,y}u(r,z)W(dr,dz).
\end{equation}
We have the following moment estimate for $D_{s,y}u(t,x)$. 

\begin{Lem}\label{Lem derivative moment}
Let $u(t,x)$ be the solution to $\eqref{SPDE}$.
For any $(t,x) \in [0,T]\times \R$ and $p \in [1,\infty)$, we have
\begin{equation}
    \lVert D_{s,y}u(t,x) \rVert_{L^p(\Omega)} \leq C_{p,T,\sigma}G(t-s, x-y), \qquad \text{for almost every $(s,y) \in [0,T] \times \R$.}
\end{equation}
\end{Lem}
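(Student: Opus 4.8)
The plan is to apply the $L^p(\Omega)$-version of Minkowski's and Burkholder-Davis-Gundy's inequalities to the stochastic integral representation \eqref{sol derivative}, and then conclude by an iteration (Picard/Gronwall-type) argument. First I would fix $p \in [2, \infty)$ (the case $p \in [1,2)$ follows from Jensen/Hölder once the claim is known for $p = 2$) and write, from \eqref{sol derivative},
\begin{equation*}
    \lVert D_{s,y}u(t,x) \rVert_{L^p(\Omega)} \leq G(t-s,x-y)\,\lVert \sigma(u(s,y)) \rVert_{L^p(\Omega)} + \biggl\lVert \int_s^t\int_{\R} G(t-r,x-z)\sigma'(u(r,z)) D_{s,y}u(r,z)\,W(dr,dz) \biggr\rVert_{L^p(\Omega)}.
\end{equation*}
The first term is bounded by $C_{p,T,\sigma} G(t-s,x-y)$ using the uniform $L^p(\Omega)$-bound on $u$ from Proposition \ref{Prp sol. wp}(i) together with the Lipschitz/boundedness of $\sigma$. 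For the stochastic integral term, the BDG inequality for martingale measures (in the form used in \cite{Dal99, DQ}) together with Minkowski's integral inequality gives, for a constant $c_p$,
\begin{equation*}
    \biggl\lVert \int_s^t\int_{\R} G(t-r,x-z)\sigma'(u(r,z)) D_{s,y}u(r,z)\,W(dr,dz) \biggr\rVert_{L^p(\Omega)}^2 \leq c_p \int_s^t \int_{\R^2} G(t-r,x-z)G(t-r,x-z') \lVert D_{s,y}u(r,z) \rVert_{L^p(\Omega)} \lVert D_{s,y}u(r,z') \rVert_{L^p(\Omega)} \Gamma(z-z')\,dz\,dz'\,dr,
\end{equation*}
where I have pulled the $L^p(\Omega)$-norm inside the $dr\,dz\,dz'$ integral and used $\lvert \sigma'\rvert \leq \mathrm{Lip}(\sigma)$.

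Next I would set $H(r) \coloneqq \sup_{z \in \R} \bigl( \lVert D_{s,y}u(r,z) \rVert_{L^p(\Omega)} / G(t-r, x-z) \bigr)$ — more carefully, one works with the a.e.-defined quantity and uses the semigroup/convolution identities for $G$. The key analytic input is the standard bound on the "correlated" convolution of the fundamental solution: for both the heat and wave kernels, and under Assumption \ref{Assum 1}(2) (in particular $\Gamma \in L^1(\R)$ in case (II), or the white-noise normalization in case (I)),
\begin{equation*}
    \int_{\R^2} G(t-r, x-z)G(t-r,x-z')\,\Gamma(z-z')\,dz\,dz' \leq C_{T,\Gamma}\, (t-r)^{-\alpha}
\end{equation*}
for a suitable $\alpha < 1$ ($\alpha = 1/2$ for the heat equation with white noise, $\alpha = 0$ otherwise), which is precisely what makes the iteration converge. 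Feeding this into the recursive inequality for $H$ and applying a Gronwall-type lemma for kernels with integrable singularity (as in \cite{Dal99}) yields $\sup_{r \in [0,T]} H(r) \leq C_{p,T,\sigma} < \infty$, which is the claimed estimate.

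The main obstacle — really the only non-routine point — is handling the $D_{s,y}u(r,z)$ factor inside the stochastic integral, since the natural "take $L^p$ and sup over the space variable" bound is weighted by $G(t-r,x-z)$ and one must check that the convolution of two such weights against $\Gamma$ reproduces the same type of weight $G(t-r,x-z)$ (up to the integrable-in-$r$ factor). This is where the specific structure of the heat and wave kernels (the semigroup property $\int G(t-r,x-z)G(r-s,z-y)dz = G(t-s,x-y)$ for the heat kernel, and the analogous support-convolution bound for the wave kernel) is used, and where one invokes case (I)/(II) of Assumption \ref{Assum 1} to control the role of $\Gamma$. I would also note that the estimate is asserted only for a.e.\ $(s,y)$, which is consistent with $D u(t,x)$ being an element of $L^p(\Omega;\cH_T)$ and lets us ignore null sets throughout.
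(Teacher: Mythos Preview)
The paper does not actually prove this lemma; it refers the reader to \cite[Lemma~A.1]{1dSHECLT} for the heat case and to \cite[Lemma~2.2]{1dSWECLT} and \cite[Theorem~1.3]{SWE12d} for the wave case. Your outline --- BDG plus Minkowski applied to \eqref{sol derivative}, followed by an iteration that exploits the convolution structure of $G$ --- is precisely the scheme those references implement, so in spirit your approach agrees with the paper's (deferred) proof.

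Two points need tightening, however. First, the ratio defining $H(r)$ should carry $G(r-s, z-y)$ in the denominator, not $G(t-r,x-z)$: the ansatz one propagates through the recursion is $\lVert D_{s,y}u(r,z)\rVert_{L^p} \lesssim G(r-s,z-y)$, and it is the supremum of this ratio over $z$ (and then over $r \in [s,t]$) that one bounds. Second, the displayed ``key analytic input'' $\int_{\R^2} G(t-r,x-z)G(t-r,x-z')\Gamma(z-z')\,dz\,dz' \leq C(t-r)^{-\alpha}$ is the estimate used for moment bounds of the \emph{solution} $u$, and on its own it cannot close the recursion for $H$: inserting the ansatz into the stochastic-integral term produces the four-factor integral
\[
\int_{\R^2} G(t-r,x-z)G(t-r,x-z')G(r-s,z-y)G(r-s,z'-y)\Gamma(z-z')\,dz\,dz',
\]
and one must show this is bounded by $h(r)\,G(t-s,x-y)^2$ with $\int_s^t h(r)\,dr < \infty$. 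For the heat kernel this follows from $G(t,\cdot)^2 = (4\pi t)^{-1/2}G(t/2,\cdot)$ together with the semigroup identity; for the wave kernel the analogous support/convolution bound does the job. You do name this ``reproducing'' property as the real crux in your final paragraph, so the issue is more a misplacement of emphasis than a missing idea.
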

For the proof of Lemma \ref{Lem derivative moment}, see \cite[Lemma A.1]{1dSHECLT} for the stochastic heat equation case and see \cite[Lemma 2.2]{1dSWECLT} and \cite[Theorem 1.3]{SWE12d} for the stochastic wave equation case. 

\begin{Rem}
To be precise, in \cite{1dSHECLT, 1dSWECLT, SWE12d}, the estimate in Lemma \ref{Lem derivative moment} is derived only under the setting where $\cI(t) = 1$. 
In our case, $\cI(t)$ may depend on time, but as long as it is deterministic, its Malliavin derivative vanishes, and we still have \eqref{sol derivative}, just as in the case $\cI(t) = 1$.
Therefore, Lemma \ref{Lem derivative moment} can be proved by the same argument as in the cited works, where the case $\cI(t) = 1$ is considered.
\end{Rem}

Recall from Section \ref{subsection Solutions to stochastic heat and wave equations} the following notation: For $a,b \geq 0$, $t\in [0,T]$, and $\mathbb{T}_k = (t_1, \ldots, t_k) \in [0,T]^k$,
\begin{align*}
    &F_{b}^a(t) \coloneqq \int_{a}^b (u(t,x) - \E[u(t,x)])dx, \quad \mathbb{F}^a_b(\mathbb{T}_k) \coloneqq (F_{b}^a(t_1), \ldots, F_{b}^a(t_k)), \quad \text{and} \quad \bF_b(\bT_k) \coloneqq \bF_b^0(\bT_k). 
\end{align*}
\begin{Lem}
\label{Lem spatial average derivative moment bounds}
For any $a, b \in \R$, $t \in [0,T]$, and $p\in [1,\infty)$, we have
\begin{equation*}
    \left\lVert D_{s,y}F_b^a(t) \right\rVert_{L^p(\Omega)} \leq C_{p,T,\sigma} (\ind_{[a,b]} \ast G(t-s))(y), \qquad \text{for almost every $(s,y) \in [0,T] \times \R$.}
\end{equation*} 
\end{Lem}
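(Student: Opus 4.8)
The plan is to interchange the Malliavin derivative with the Lebesgue integral in the variable $x$ that defines $F_b^a(t)$, and then apply Lemma~\ref{Lem derivative moment} together with Minkowski's integral inequality. Since $\E[u(t,x)]$ is deterministic, its Malliavin derivative vanishes, so $D F_b^a(t) = D\bigl(\int_a^b u(t,x)\,dx\bigr)$, and it suffices to analyse the latter.

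First I would observe that, for $\sigma \in C_{\mathrm{b}}^1(\R)$, $u(t,x) \in \mathbb{D}^{1,p}$ for every $(t,x)$, with $\sup_{x \in [a,b]} \lVert u(t,x) \rVert_{1,p} < \infty$: the $L^p(\Omega)$-bound is Proposition~\ref{Prp sol. wp}(i), and $\lVert D u(t,x) \rVert_{L^p(\Omega;\cH_T)} < \infty$ uniformly in $x$ follows from the pointwise estimate $\lVert D_{s,y} u(t,x) \rVert_{L^p(\Omega)} \le C_{p,T,\sigma} G(t-s,x-y)$ of Lemma~\ref{Lem derivative moment} and the integrability of $G(t-s,\cdot)$ against $\Gamma$ (which is exactly what makes the stochastic integral in \eqref{sol eq} well defined). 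Using the $L^p(\Omega)$-continuity of $x \mapsto u(t,x)$, the Riemann sums $\sum_j u(t,x_j)(x_{j+1}-x_j)$ over partitions of $[a,b]$ converge in $L^p(\Omega)$ to $\int_a^b u(t,x)\,dx$, while the corresponding sums of derivatives converge in $L^p(\Omega;\cH_T)$ to $\int_a^b D u(t,x)\,dx$; since $D$ is closed on $L^p(\Omega)$, this yields $\int_a^b u(t,x)\,dx \in \mathbb{D}^{1,p}$ with $D\bigl(\int_a^b u(t,x)\,dx\bigr) = \int_a^b D u(t,x)\,dx$ in $\cH_T$. Passing to the jointly measurable version of $D_{s,y}u(t,x)$, this gives a jointly measurable version with $D_{s,y}F_b^a(t) = \int_a^b D_{s,y}u(t,x)\,dx$ for almost every $(s,y) \in [0,T]\times\R$. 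Applying Minkowski's integral inequality and then Lemma~\ref{Lem derivative moment},
\[
    \lVert D_{s,y}F_b^a(t) \rVert_{L^p(\Omega)}
    \le \int_a^b \lVert D_{s,y}u(t,x) \rVert_{L^p(\Omega)}\,dx
    \le C_{p,T,\sigma}\int_a^b G(t-s,x-y)\,dx,
\]
for almost every $(s,y)$. Since $G(t-s,\cdot)$ is an even function in both the heat and wave cases, $\int_a^b G(t-s,x-y)\,dx = \int_{\R} \ind_{[a,b]}(x) G(t-s, y-x)\,dx = (\ind_{[a,b]} \ast G(t-s))(y)$, which is the desired bound.

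The main obstacle is the commutation of $D$ with $\int_a^b(\cdot)\,dx$; this is a Fubini-type property of the Malliavin derivative, which I would justify as above via Riemann-sum approximation and the closedness of $D$, the key input being the uniform bound on $\lVert u(t,x) \rVert_{1,p}$ for $x \in [a,b]$ needed to converge in $L^p(\Omega;\cH_T)$. A secondary, purely measure-theoretic point is that the estimate of Lemma~\ref{Lem derivative moment} holds, for each fixed $(t,x)$, only for a.e. $(s,y)$, with the exceptional set possibly depending on $x$; applying Fubini to the jointly measurable map $(x,s,y) \mapsto D_{s,y}u(t,x)$ (comparing it to $C_{p,T,\sigma} G(t-s,x-y)$) upgrades this to the statement that, for a.e. $(s,y)$, the pointwise bound holds for a.e. $x \in [a,b]$, which is exactly what is needed to control the integral in the displayed inequality.
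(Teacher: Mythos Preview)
Your proposal is correct and follows essentially the same approach as the paper: commute $D$ with the spatial integral using the closedness of $D$, identify a pointwise version of the derivative, and then apply Lemma~\ref{Lem derivative moment}. The paper phrases the commutation step via the Bochner integral in $L^p(\Omega)$ rather than Riemann sums, but this is a cosmetic difference; your treatment of the measurability and a.e.\ issues is in fact more careful than the paper's, which simply asserts that $\int_a^b D_{s,y}u(t,x)\,dx$ is a version of $DF_b^a(t)$.
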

\begin{proof}
$F_b^a(t)$ can be viewed as a Bochner integral valued in $L^p(\Omega)$, and, from the closedness of the differential operator $D$, it follows that $DF_b^a(t) = \int_a^b Du(t,x)dx$. 
It is easily seen that $\int_a^b D_{s,y}u(t,x)dx$ is a version of $DF_b^a(t)$ (\textit{i.e.} $DF_b^a(t) = \int_a^b D_{\cdot,\star}u(t,x)dx$ in $L^p(\Omega; \cH_T)$), and Lemma \ref{Lem spatial average derivative moment bounds} follows from Lemma \ref{Lem derivative moment}.
\end{proof}

By combining \eqref{Covariance bound} and Lemma \ref{Lem spatial average derivative moment bounds}, we obtain the following covariance estimate.
\begin{Prp}
\label{Prp Cov estimate}
Let $L, R >0$, $k \in \N$, and $\varphi$, $\psi$ be Lipschitz functions on $\R^k$. 
For any $\mathbb{T}_k = (t_1, \ldots, t_k) \in [0,T]^k$ and for sufficiently large $\Theta > 0$, we have
\begin{equation*}
    \left|\mathrm{Cov}\left[\varphi(\bF_L(\mathbb{T}_k)),  \psi(\bF_{L+ \Theta +R}^{L+ \Theta}(\bT_k))\right]\right|
    \leq C_{T} \mathrm{Lip}(\varphi) \mathrm{Lip}(\psi) k^2LR \exp\{-C_{T, \eta, \rho}\Theta^{2 \land \eta}\}.
\end{equation*}

\end{Prp}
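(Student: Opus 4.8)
The plan is to start from the Clark–Ocone covariance bound \eqref{Covariance bound} applied to $F = \varphi(\bF_L(\bT_k))$ and $G = \psi(\bF_{L+\Theta+R}^{L+\Theta}(\bT_k))$. Using the chain rule for the Malliavin derivative (which applies since $\varphi,\psi$ are Lipschitz), we have $D_{s,y}\varphi(\bF_L(\bT_k)) = \sum_{i=1}^k \Phi_i D_{s,y}F_L(t_i)$ with $|\Phi_i| \le \mathrm{Lip}(\varphi)$ a.s., and similarly for $\psi$. Taking $L^2(\Omega)$ norms, applying the triangle inequality, and invoking Lemma \ref{Lem spatial average derivative moment bounds} gives
\begin{equation*}
    \norm{D_{s,y}\varphi(\bF_L(\bT_k))}_{L^2(\Omega)} \le C_{T,\sigma}\,\mathrm{Lip}(\varphi)\sum_{i=1}^k (\ind_{[0,L]} \ast G(t_i - s))(y),
\end{equation*}
and an analogous bound for $\psi$ with $\ind_{[L+\Theta, L+\Theta+R]}$. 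Substituting into \eqref{Covariance bound}, the covariance is bounded by $C_T\,\mathrm{Lip}(\varphi)\mathrm{Lip}(\psi)$ times a sum over $i,j \in \{1,\dots,k\}$ of integrals of the form
\begin{equation*}
    \int_0^T \int_{\R^2} (\ind_{[0,L]} \ast G(t_i-s))(y)\,(\ind_{[L+\Theta,L+\Theta+R]} \ast G(t_j-s))(z)\,\Gamma(y-z)\,dy\,dz\,ds.
\end{equation*}

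The heart of the argument is to show that each such integral is at most $C_T\,LR\,\exp(-C_{T,\eta,\rho}\Theta^{2\land\eta})$, which accounts for the $k^2LR$ factor. The key point is a support/decay estimate: $(\ind_{[0,L]} \ast G(t_i-s))(y)$ is supported (or concentrated, in the heat case) near $[0,L]$, while $(\ind_{[L+\Theta,L+\Theta+R]} \ast G(t_j-s))(z)$ is supported near $[L+\Theta, L+\Theta+R]$, so $|y - z| \gtrsim \Theta$ on the effective domain of integration, and then the decay of $\Gamma$ from Assumption \ref{Assum 1}(2) takes over. For the wave equation this is clean: $G(r,\cdot)$ is supported in $[-r, r] \subset [-T,T]$, so $\ind_{[0,L]} \ast G(t_i-s)$ is supported in $[-T, L+T]$ and $\ind_{[L+\Theta,L+\Theta+R]}\ast G(t_j-s)$ in $[L+\Theta-T, L+\Theta+R+T]$; choosing $\Theta > 2T+1$ (this is the "sufficiently large $\Theta$") guarantees $|y-z| \ge \Theta - 2T$ throughout, and then $\Gamma(y-z) \le C\exp(-\rho|y-z|^\eta) \le C\exp(-\rho(\Theta-2T)^\eta)$ in case (II), while in case (I) the $\delta_0(y-z)$ forces $y=z$, which is impossible for large $\Theta$, making the covariance literally zero. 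Bounding the remaining $\ind * G$ factors crudely by their $L^1$ norms (each is $\le C_T L$ resp. $C_T R$, uniformly in $s$, since $\norm{G(r,\cdot)}_{L^1} \le C_T$) and integrating $s$ over $[0,T]$ yields the $LR$ factor; one absorbs the exponential constant and the polynomial-in-$\Theta$ slack into $C_{T,\eta,\rho}\Theta^{2\land\eta}$ by noting $\eta > 1$ so $2\land\eta = \min(2,\eta) \le \eta$.

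For the heat equation the Gaussian kernel has full support, so instead of an exact cutoff one splits the $y,z$-integration into the region $\{|y-z| \ge \Theta/2\}$, where $\Gamma$-decay (case II) or the Gaussian tail of $G$ itself (the $\delta_0$ case collapses the integral to $\int (\ind*G)(y)(\ind*G)(y)dy$, which is small because the two convolutions live $\Theta$ apart and $G$ has Gaussian tails) gives $\exp(-C\Theta^{2\land\eta})$, and the complementary region which is empty once $\Theta$ exceeds $L+R$ plus a margin — or more carefully, one notes $(\ind_{[0,L]}*G(t_i-s))(y)$ is itself $O(\exp(-c\,\mathrm{dist}(y,[0,L])^2/T))$ so that, with $z \in [L+\Theta, L+\Theta+R]$ roughly, the product already carries a factor $\exp(-c\Theta^2/T)$. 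Either way the Gaussian tail contributes a $\Theta^2$-rate, which combined with the (possibly faster) $\Theta^\eta$-rate of $\Gamma$ gives exactly the exponent $2\land\eta$; the notational convention that $\delta_0$ is treated as decaying with $\eta=\infty$ makes $2\land\eta = 2$ consistent in case (I). The main obstacle is this heat-case estimate: one must carefully combine the Gaussian decay of $G$ with the decay of $\Gamma$ and verify that the surviving rate is $\Theta^{2\land\eta}$ rather than something weaker, keeping the prefactor linear in $LR$ and uniform in $s \in [0,T]$; Lemma \ref{Lem exp decay convolution} (convolutions preserve stretched-exponential decay) is the tool that makes this bookkeeping go through.
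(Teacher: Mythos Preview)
Your approach is correct and begins identically to the paper: chain rule, the Clark--Ocone bound \eqref{Covariance bound}, and Lemma \ref{Lem spatial average derivative moment bounds} reduce the covariance to the same double sum over $i,j$ of integrals. The difference lies in how those integrals are estimated. You treat the wave and heat cases separately---a compact-support argument for the wave kernel, a region-splitting plus Gaussian-tail argument for the heat kernel---whereas the paper first bounds the wave kernel pointwise by the heat kernel via $\tfrac{1}{2}\ind_{\{|y|<t\}}(x) \le C_T (2\pi t)^{-1/2}e^{-x^2/(2t)}$ and then, in the heat case, applies Fubini to rewrite each $(y,z)$-integral as
\[
\int_0^{t_i\land t_j} ds \int_0^L dw \int_{L+\Theta}^{L+\Theta+R} d\widetilde w \; (G(t_i-s)\ast G(t_j-s)\ast\Gamma)(w-\widetilde w).
\]
On this domain $|w-\widetilde w|\ge\Theta$ automatically; bounding $\int_0^{t_i\land t_j} G(t_i-s)\ast G(t_j-s)\,ds$ by $C_T\,\widetilde G$ with $\widetilde G(x)=e^{-x^2/(2T)}$ and applying Lemma \ref{Lem exp decay convolution} \emph{once} to $\widetilde G\ast\Gamma$ yields the $\exp(-C_{T,\eta,\rho}\Theta^{2\land\eta})$ decay directly, with the $LR$ factor coming from the area of $[0,L]\times[L+\Theta,L+\Theta+R]$. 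Your route reaches the same conclusion, but the heat-case splitting you sketch (region $|y-z|\ge\Theta/2$ versus its complement, plus separate tail bounds on the two $\ind\ast G$ factors) involves noticeably more bookkeeping, and the role you assign to Lemma \ref{Lem exp decay convolution} stays implicit. The paper's Fubini step buys a unified treatment of both equations and a single clean application of that lemma.
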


\begin{proof}
We first consider the stochastic heat equation case (\textit{i.e.} $u(t,x)$ satisfies \eqref{sol eq} with \eqref{cond she}).
By the chain rule, $\varphi(\bF_L(\mathbb{T}_k))$ and $\psi(\bF_{L+ \Theta +R}^{L+ \Theta}(\bT_k))$ belong to $\bD^{1,2}$ and 
\begin{equation}
    D\varphi(\bF_L(\mathbb{T}_k)) = \sum_{i=1}^{k}\Phi_i DF_L(t_i), \qquad D\psi(\bF_{L+ \Theta +R}^{L+ \Theta}(\bT_k)) = \sum_{i=1}^{k}\Psi_i DF_{L+ \Theta +R}^{L+ \Theta}(t_i),
\end{equation}
where $\Phi_i, \Psi_i$ are random variables bounded by $\mathrm{Lip}(\varphi), \mathrm{Lip}(\psi)$, respectively.
Applying \eqref{Covariance bound} and Lemma \ref{Lem spatial average derivative moment bounds}, we get
\begin{align}
    &\left|\mathrm{Cov}\left[\varphi(\bF_L(\mathbb{T}_k)),  \psi(\bF_{L+ \Theta +R}^{L+ \Theta}(\bT_k))\right]\right|\\
    &\leq C_{T, \sigma} \mathrm{Lip}(\varphi) \mathrm{Lip}(\psi) \sum_{i,j = 1}^{k}\int_0^{t_i \land t_j}\int_{\R^2}\left(\ind_{[0,L]}\ast G(t_i-s)\right)(y)\left(\ind_{[L+\Theta,L+\Theta+R]}\ast G(t_j-s)\right)(z)\Gamma(y-z)dydzds.
\end{align}
By a simple computation, 
\begin{align}
    &\int_0^{t_i \land t_j}\int_{\R^2}\left(\ind_{[0,L]}\ast G(t_i-s)\right)(y)\left(\ind_{[L+\Theta,L+\Theta+R]}\ast G(t_j-s)\right)(z)\Gamma(y-z)dydzds\\
    &= 
    \int_0^{t_i \land t_j}ds\int_0^L dw \int_{L+\Theta}^{L+\Theta+R}d\widetilde{w} (G(t_i-s)\ast G(t_j-s)\ast \Gamma)(w-\widetilde{w})\\
    &\leq C_T 
    \int_0^L dw \int_{L+\Theta}^{L+\Theta+R}d\widetilde{w}(\widetilde{G}\ast \Gamma)(w-\widetilde{w}),
\end{align}
where $\widetilde{G}(x) = e^{-\frac{|x|^2}{2T}}$.
Since $(\widetilde{G}\ast \Gamma)(x) = O\left(\exp\left\{-C_{T, \eta, \rho}|x|^{2 \land \eta} \right\}\right)$ as $|x| \to \infty$ by Lemma \ref{Lem exp decay convolution} below, the desired estimate follows. 
The same estimate also holds for the stochastic wave equation case since $\frac{1}{2}\ind_{\{|y|<t\}}(x) \leq C_T \frac{1}{\sqrt{2\pi t}}e^{-\frac{|x|^2}{2t}}$ for $t \in (0,T]$ and $x \in \R$.
This completes the proof.
\end{proof}

\begin{Lem}
\label{Lem exp decay convolution}
Let $f_i \colon \R \to \R \quad (i = 1,2)$ be locally integrable functions such that for some constants $K_i > 1$, $C_i>0$, $\alpha_i>0$, and $\beta_i > 0$,
\begin{equation}
    |f_i(x)| \leq C_i \exp\left(-\alpha_i|x|^{\beta_i}\right), \qquad |x| > K_i, \quad i =1,2. 
\end{equation}
Then $f_1, f_2 \in L^1(\R)$ and for any $x$ with $|x| > 2(K_1 \lor K_2)$, we have
\begin{equation}
    |(f_1 \ast f_2)(x)| \leq 2(C_1\norm{f_2}_{L^1(\R)} \lor C_2\norm{f_1}_{L^1(\R)})\exp\left(- \left(\frac{\alpha_1}{2^{\beta_1}} \land \frac{\alpha_2}{2^{\beta_2}} \right)|x|^{\beta_1 \land \beta_2}\right).
\end{equation}
\end{Lem}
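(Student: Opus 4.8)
The plan is to establish integrability first and then bound the convolution by splitting the integration variable according to its distance from the origin. For integrability: on the compact set $\{|x|\le K_i\}$ the function $f_i$ is integrable since it is locally integrable, and on $\{|x|>K_i\}$ it is dominated by $C_i\exp(-\alpha_i|x|^{\beta_i})$, which lies in $L^1(\R)$ because $\beta_i>0$; hence $f_i\in L^1(\R)$ and $f_1\ast f_2$ is well defined everywhere.

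For the decay estimate, fix $x$ with $|x|>2K$, where $K\coloneqq K_1\lor K_2$, and write $(f_1\ast f_2)(x)=\int_\R f_1(x-y)f_2(y)\,dy$. The key step is to split the domain of integration into $A\coloneqq\{|y|\le|x|/2\}$ and $A^{c}=\{|y|>|x|/2\}$. On $A$ one has $|x-y|\ge|x|-|y|\ge|x|/2>K\ge K_1$, so $|f_1(x-y)|\le C_1\exp(-\alpha_1(|x|/2)^{\beta_1})=C_1\exp(-\alpha_1 2^{-\beta_1}|x|^{\beta_1})$, and therefore the integral over $A$ is at most $C_1\norm{f_2}_{L^1(\R)}\exp(-\alpha_1 2^{-\beta_1}|x|^{\beta_1})$. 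On $A^{c}$ one has $|y|>|x|/2>K\ge K_2$, so $|f_2(y)|\le C_2\exp(-\alpha_2 2^{-\beta_2}|x|^{\beta_2})$, and the integral over $A^{c}$ is at most $C_2\norm{f_1}_{L^1(\R)}\exp(-\alpha_2 2^{-\beta_2}|x|^{\beta_2})$ after bounding $\int_{A^c}|f_1(x-y)|\,dy$ by $\norm{f_1}_{L^1(\R)}$.

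It remains to combine the two bounds into the stated form. Since $K_i>1$ forces $|x|>2K>1$, the function $t\mapsto|x|^{t}$ is non-decreasing, so $|x|^{\beta_i}\ge|x|^{\beta_1\land\beta_2}$ for $i=1,2$; hence each exponential factor is bounded above by $\exp\bigl(-(\alpha_1 2^{-\beta_1}\land\alpha_2 2^{-\beta_2})|x|^{\beta_1\land\beta_2}\bigr)$. Replacing the two prefactors $C_1\norm{f_2}_{L^1(\R)}$ and $C_2\norm{f_1}_{L^1(\R)}$ by their maximum and adding the two contributions produces the factor $2$ and yields exactly the claimed inequality. I do not expect any substantial obstacle here: the splitting $A\cup A^{c}=\R$ is the whole idea, and the only points needing care are the triangle-inequality estimate $|x-y|\ge|x|/2$ valid precisely on $A$, and the monotonicity step which uses $|x|\ge1$; the rest is bookkeeping to match the precise constants in the statement.
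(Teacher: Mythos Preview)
Your proof is correct and follows essentially the same approach as the paper: the identical splitting of the integration domain into $\{|y|\le |x|/2\}$ and its complement, the same triangle-inequality bound $|x-y|\ge |x|/2$ on the first piece, and the same combination of the two resulting terms. You are in fact slightly more explicit than the paper about the final step (using $|x|>1$ to compare $|x|^{\beta_i}$ with $|x|^{\beta_1\land\beta_2}$), which the paper dismisses as ``a straightforward estimate.''
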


\begin{proof}
Obviously, $f_1, f_2 \in L^1(\R)$. 
If $|x| > 2(K_1 \lor K_2)$ and $|y| \leq \frac{|x|}{2}$, then $|x-y| \geq |x| - |y| \geq \frac{|x|}{2} > (K_1 \lor K_2)$, and we have 
\begin{align*}
    |(f_1 \ast f_2)(x)| 
    &\leq 
    \int_{\{|y| \leq \frac{|x|}{2}\}}|f_1(x-y)||f_2(y)|dy + \int_{\{|y| > \frac{|x|}{2}\}}|f_1(x-y)||f_2(y)|dy\\
    &\leq 
    C_1\int_{\{|y| \leq \frac{|x|}{2}\}}\exp (-\alpha_1|x-y|^{\beta_1})|f_2(y)|dy + C_2\int_{\{|y| > \frac{|x|}{2}\}}|f_1(x-y)|\exp (-\alpha_2|y|^{\beta_2})dy\\
    &\leq 
    C_1\int_{\{|y| \leq \frac{|x|}{2}\}}|f_2(y)|dy \exp \left(-\frac{\alpha_1}{2^{\beta_1}}|x|^{\beta_1}\right) + C_2\int_{\{|y| > \frac{|x|}{2}\}}|f_1(x-y)|dy\exp \left(-\frac{\alpha_2}{2^{\beta_2}}|x|^{\beta_2}\right)\\
    &\leq 
    C_1 \lVert f_2 \rVert_{L^1(\R)}\exp \left(-\frac{\alpha_1}{2^{\beta_1}}|x|^{\beta_1}\right) + C_2\lVert f_1 \rVert_{L^1(\R)} \exp \left(-\frac{\alpha_2}{2^{\beta_2}}|x|^{\beta_2}\right).
\end{align*}
The desired inequality now follows from a straightforward estimate. 
\end{proof}

\section{LDP for the finite-dimensional distributions}
\label{section LDP for the finite-dimensional distributions}
We prove Theorem \ref{Thm main 1} in this section. 
The flow of the proof is essentially the same as in \cite[Section 6.4]{Dembo-Zeitouni}, but in our setting, the boundedness of random variables assumed in \cite{Dembo-Zeitouni} is not satisfied.
Instead, we proceed by using tail estimates of spatial integrals.

Section \ref{subsection Exponential tightness} and Section \ref{subsection Existence of the limit} are devoted to proving the exponential tightness of $\{\mu_R^{\mathbb{T}_k}\}_{R > 0}$ and the existence of the limit $\lim_{R \to \infty}R^{-1}\log \E[e^{Rg(\frac{1}{R}\mathbb{F}_R(\mathbb{T}_k))}]$ for each $g \in \mathcal{G}_{\mathrm{BLC}}(\R^k)$, respectively.  
It follows that $\{\mu_R^{\mathbb{T}_k}\}_{R > 0}$ satisfies the LDP with speed $R$ and the good rate function $I^{\mathbb{T}_k}$, thanks to Proposition \ref{Prp Inverse Varadhan's lemma}. 
We then apply Proposition \ref{Prp convexity and duality} to show that $I^{\mathbb{T}_k}$ is convex and can be expressed as the Fenchel-Legendre transform in Section \ref{subsection Convexity of the rate function}.

\subsection{Exponential tightness}
\label{subsection Exponential tightness}
\begin{Prp}
\label{Prp exponential tightness}
For every $k \in \N$ and $\mathbb{T}_k = (t_1, \ldots, t_k) \in [0,T]^k$, the family of probability measures $\{\mu_R^{\mathbb{T}_k}\}_{R > 0}$ is exponentially tight.
\end{Prp}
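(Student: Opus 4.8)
The plan is to establish exponential tightness of $\{\mu_R^{\mathbb{T}_k}\}_{R>0}$ on $\R^k$ by reducing to the one-dimensional statement: it suffices to show that for each fixed $t\in[0,T]$, the family of laws of $R^{-1}F_R(t)$ is exponentially tight, since a finite product of exponentially tight families is exponentially tight (take $K_\alpha$ to be a product of intervals $[-M_i,M_i]$ and union-bound the complement). So the core task is: for every $\alpha<\infty$ there is $M<\infty$ with $\limsup_{R\to\infty}\frac1R\log P(|R^{-1}F_R(t)|>M)<-\alpha$, i.e. a Gaussian-type (or at least exponential-in-$R$) tail bound for $R^{-1}F_R(t)$ that is uniform in $R$.

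The key step is a moment/exponential-moment estimate for $F_R(t)=\int_0^R(u(t,x)-\E[u(t,x)])\,dx$. First I would control $\E[|F_R(t)|^p]$ for even integers $p$: writing $F_R(t)^p$ as a $p$-fold integral and using the $L^p(\Omega)$ boundedness of $u(t,x)-\E[u(t,x)]$ together with the covariance decay from Proposition 3.3 — or more directly, using the hypercontractivity-free route via the covariance bound \eqref{Covariance bound} and Lemma 3.2 — one gets that $\|F_R(t)\|_{L^2(\Omega)}^2\le C_T R$ (the integrand's correlations are summable in the white-noise and case-(II) settings because $\Gamma\in L^1$ and the heat/wave kernels contribute a bounded factor, exactly as in the computation inside the proof of Proposition 3.3 with $L=R$, $\Theta=0$). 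Then, because $\sigma\in C^1_{\mathrm b}$ is bounded, the Malliavin derivative $D_{s,y}F_R(t)$ is bounded in $L^2(\Omega)$ by $C_{T,\sigma}(\ind_{[0,R]}*G(t-s))(y)$ pointwise (Lemma 3.5), so $\|DF_R(t)\|_{\mathcal H_T}\le C_{T,\sigma}\,\|\ind_{[0,R]}*G\|$-type quantity, whose $\mathcal H_T$-norm squared is again $O(R)$ uniformly. A random variable $F\in\mathbb D^{1,2}$ with $\|DF\|_{\mathcal H_T}\le c$ a.s. is $c$-sub-Gaussian after centering (the classical Gaussian concentration / logarithmic Sobolev consequence for functionals of Gaussian noise), giving $P(|F_R(t)-\E F_R(t)|>\lambda)\le 2\exp(-\lambda^2/(2c^2 R))$ with $c=C_{T,\sigma}$ independent of $R$; note $\E F_R(t)=0$ by definition of the centering. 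Dividing by $R$: $P(|R^{-1}F_R(t)|>M)\le 2\exp(-M^2 R/(2c^2))$, so $\limsup_R\frac1R\log P(\cdot)\le -M^2/(2c^2)$, which is $<-\alpha$ once $M>c\sqrt{2\alpha}$.

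Carrying this out in order: (1) recall from Lemma 3.5 and the convention that $\|D_{s,y}F_R(t)\|_{L^2(\Omega)}\le C_{T,\sigma}(\ind_{[0,R]}*G(t-s))(y)$ and hence compute $\int_0^T\!\!\int_{\R^2}\|D_{s,y}F_R(t)\|_{L^2(\Omega)}\|D_{s,z}F_R(t)\|_{L^2(\Omega)}\Gamma(y-z)\,dy\,dz\,ds\le C_{T,\sigma}\,R$ — bounding $\|DF_R(t)\|_{\mathcal H_T}^2$ deterministically actually requires the stronger pointwise bound $|D_{s,y}F_R(t)|\le C_{T,\sigma}(\ind_{[0,R]}*G(t-s))(y)$ a.s., which holds here because $|\sigma|$ and $|\sigma'|$ are bounded (one can re-run the Picard iteration for \eqref{sol derivative} keeping everything a.s. rather than in $L^p$, exploiting $|\sigma(u(s,y))|\le\|\sigma\|_\infty$); (2) invoke the Gaussian concentration inequality for Malliavin-differentiable functionals with a.s.-bounded derivative norm; (3) deduce the one-dimensional exponential tail; (4) take products and union bounds over the $k$ coordinates. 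The main obstacle I anticipate is step (1)'s \emph{almost-sure} (not merely $L^p(\Omega)$) bound on $\|DF_R(t)\|_{\mathcal H_T}$: the cited Lemma 3.5 gives only an $L^p(\Omega)$ bound on $D_{s,y}F_R(t)$, whereas the concentration inequality needs $\|DF_R(t)\|_{\mathcal H_T}$ bounded by a constant (times $\sqrt R$) a.s. This is exactly where the boundedness of $\sigma$ in Assumption 1(1) is essential — the Remark after Assumption 1 flags that bounded $\sigma$ lets one "control the quadratic variations almost surely," and the same boundedness yields the a.s. bound on the derivative via the stochastic-integral equation \eqref{sol derivative} — so I would either cite or prove that a.s. version before applying concentration.
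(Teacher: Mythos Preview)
Your overall architecture (reduce to one coordinate by a union bound, then prove a sub-Gaussian tail $P(|R^{-1}F_R(t)|>M)\le 2e^{-cM^2R}$) is exactly what the paper does. The gap is in how you obtain that tail.

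You propose to use the Gaussian concentration inequality for $F\in\mathbb D^{1,2}$ with $\|DF\|_{\cH_T}\le c$ almost surely, and you correctly flag that this requires an \emph{almost-sure} bound on $\|DF_R(t)\|_{\cH_T}$, not the $L^p$ bound of Lemma~3.5. Your proposed fix --- ``re-run the Picard iteration for \eqref{sol derivative} keeping everything a.s.\ rather than in $L^p$'' --- does not work. The equation
\[
D_{s,y}u(t,x)=G(t-s,x-y)\sigma(u(s,y))+\int_s^t\!\!\int_{\R}G(t-r,x-z)\sigma'(u(r,z))D_{s,y}u(r,z)\,W(dr,dz)
\]
has a genuine stochastic integral on the right-hand side; even with $|\sigma|,|\sigma'|$ bounded, a stochastic integral of a bounded integrand is not bounded almost surely (it is Gaussian-like), so no pointwise Gr\"onwall/Picard argument can produce $|D_{s,y}u(t,x)|\le C\,G(t-s,x-y)$ a.s. Consequently the hypothesis of the concentration inequality is not available, and the argument stalls precisely at the point you anticipated.

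The paper sidesteps this entirely. Since $u(t,x)-\cI(t)=\int_0^t\!\int_\R G(t-s,x-y)\sigma(u(s,y))\,W(ds,dy)$, integrating in $x$ gives directly
\[
F_R(t)=\int_0^t\!\!\int_\R\big(\ind_{[0,R]}*G(t-s)\big)(y)\,\sigma(u(s,y))\,W(ds,dy),
\]
i.e.\ $F_R(t)$ is the terminal value of a continuous martingale $r\mapsto F_{R,t}(r)$ whose quadratic variation is bounded \emph{almost surely} by $C_T\|\sigma\|_\infty^2\|\Gamma\|_{L^1}\,R$ --- here only the boundedness of $\sigma$ (not of $\sigma'$, and no Malliavin calculus) is used. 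The exponential martingale inequality then gives $P(|F_R(t)|>MR)\le 2e^{-M^2R/(2C)}$, which is the desired tail. This is exactly what the Remark after Assumption~\ref{Assum 1} means by ``quadratic variations \ldots\ controlled almost surely'': it refers to the It\^o quadratic variation of this martingale, not to $\|DF_R(t)\|_{\cH_T}$.
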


\begin{proof}
If we define
\begin{equation}
    F_{R,t}(r) = \int_{0}^r\int_{\R}(G(t-s) \ast \ind_{[0,R]})(y)\sigma(u(s,y))W(ds,dy),
\end{equation}
then $\{F_{R,t}(r)\}_{r \in [0,T]}$ is a square-integrable continuous martingale such that $F_{R,t}(t) = F_R(t)$ and 
\begin{align}
    \langle F_{R,t} \rangle_t 
    &= \int_0^{t}\int_{\R}(G(t-s)\ast \ind_{[0,R]})(y)(G(t-s)\ast \ind_{[0,R]})(z)\sigma(u(s,y))\sigma(u(s,z))\Gamma(y-z) dydzds \\
    &\leq C_T\norm{\sigma}_{\infty}^2\norm{\Gamma}_{L^1(\R)}R \eqqcolon C_{T, \Gamma, \sigma}R 
\end{align}
Let $\alpha >0$. 
The exponential inequality for martingales (\textit{cf.} \cite[Section A.2]{Nualart_book}) yields 
\begin{align*}
    \mu_R^{\mathbb{T}_k}\left(\left(\left[-\sqrt{2\alpha C_{T, \Gamma, \sigma}},  \sqrt{2\alpha C_{T, \Gamma, \sigma}} \right]^k\right)^c\right) 
    &\leq \sum_{i=1}^{k}P\left(\langle F_{R,t_i} \rangle_{t_i} \leq C_{T, \Gamma, \sigma}R, \  |F_{R,t_i}(t_i)| > \sqrt{2\alpha C_{T, \Gamma, \sigma}} R \right) \\
    &\leq 2k e^{-\alpha R},
\end{align*}
which implies the exponential tightness.
\end{proof}

\subsection{Existence of the limit}
\label{subsection Existence of the limit} 
In this section, we show that the limit 
\begin{equation}
    \Lambda_g^{\mathbb{T}_k} \coloneqq \lim_{R \to \infty}\frac{1}{R}\log 
    \E\left[\exp{\left\{R g\left(\frac{1}{R}\mathbb{F}_R(\mathbb{T}_k)\right)\right\}}\right]
\end{equation}
exists for any $k \in \N$, $\bT_k = (t_1, \ldots, t_k) \in [0, T]^k$, and $g \in \cG_{\mathrm{BLC}}(\R^k)$, by using approximate subadditivity lemma (Lemma \ref{Lem approximate subadditivity}). 
To this end, we first observe that it suffices to prove the existence of the limit $\Lambda_g^{\bT_k}$ for any $k \in \N$, $\bT_k \in [0,T]^k$, and $g \in \cG_{\mathrm{BLC}}(\R^k)$ satisfying $g(x) < 0$ for all $x \in \R^k$. 
Indeed, if this is established, then for any $h \in \cG_{\mathrm{BLC}}(\R^k)$ satisfying $h(x) < M_h$ for all $x \in \R^k$ for some constant $M_h > 0$, the function $h - M_h$ belongs to $\cG_{\mathrm{BLC}}(\R^k)$ and the limit 
\begin{equation}
    \Lambda_{h}^{\bT_k} = M_h + \Lambda_{h - M_h}^{\bT_k}
\end{equation}
also exists.

In what follows, we fix $\bT_k \in [0,T]^k$ and a function $g \in \cG_{\mathrm{BLC}}(\R^k)$ with $g(x) < 0$ for all $x \in \R^k$, and prove the existence of the limit $\Lambda_{g}^{\bT_k}$.
Let us write
\begin{equation}
    H^{\mathbb{T}_k}_R(g) \coloneqq  \E\left[\exp{\left\{R g\left(\frac{1}{R}\mathbb{F}_R(\mathbb{T}_k)\right)\right\}}\right]. 
\end{equation}

\begin{Lem}
\label{Lem les 5}
For sufficiently large $R>0$, 
\begin{equation*}
    H^{\mathbb{T}_k}_{R}(g) \geq \frac{1}{2}e^{-m_gR}, \qquad \text{where $m_g \coloneqq - \min_{x\in [-1,1]^k}g(x) \in (0,\infty)$}.
\end{equation*}
\end{Lem}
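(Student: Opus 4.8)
The plan is to bound $H^{\mathbb{T}_k}_R(g)$ below by restricting the expectation to an event on which $\frac1R\mathbb{F}_R(\mathbb{T}_k)$ stays in the box $[-1,1]^k$, and then showing that this event has probability bounded away from zero (in fact tending to $1$) as $R\to\infty$. First I would write
\begin{equation*}
    H^{\mathbb{T}_k}_R(g) = \E\left[\exp\left\{Rg\left(\tfrac1R\mathbb{F}_R(\mathbb{T}_k)\right)\right\}\right]
    \geq \E\left[\exp\left\{Rg\left(\tfrac1R\mathbb{F}_R(\mathbb{T}_k)\right)\right\} \ind_{\left\{\frac1R\mathbb{F}_R(\mathbb{T}_k) \in [-1,1]^k\right\}}\right].
\end{equation*}
On the event $\{\frac1R\mathbb{F}_R(\mathbb{T}_k) \in [-1,1]^k\}$ we have $g(\frac1R\mathbb{F}_R(\mathbb{T}_k)) \geq \min_{x\in[-1,1]^k} g(x) = -m_g$, hence $\exp\{Rg(\cdots)\} \geq e^{-m_g R}$, which gives
\begin{equation*}
    H^{\mathbb{T}_k}_R(g) \geq e^{-m_g R}\, P\left(\tfrac1R\mathbb{F}_R(\mathbb{T}_k) \in [-1,1]^k\right).
\end{equation*}
Note $m_g \in (0,\infty)$ because $g$ is continuous (so the min over the compact box is attained and finite) and $g < 0$ everywhere.

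It then remains to show $P(\frac1R\mathbb{F}_R(\mathbb{T}_k) \in [-1,1]^k) \geq \tfrac12$ for all sufficiently large $R$. By a union bound it suffices that $P(|\frac1R F_R(t_i)| > 1) \to 0$ for each $i$. This follows from the martingale exponential inequality already invoked in the proof of Proposition \ref{Prp exponential tightness}: with $F_{R,t}(r)$ as defined there, $\langle F_{R,t_i}\rangle_{t_i} \leq C_{T,\Gamma,\sigma}R$ almost surely, so
\begin{equation*}
    P\left(\left|\tfrac1R F_R(t_i)\right| > 1\right) = P\left(|F_{R,t_i}(t_i)| > R\right) \leq 2\exp\left\{-\frac{R^2}{2C_{T,\Gamma,\sigma}R}\right\} = 2 e^{-R/(2C_{T,\Gamma,\sigma})},
\end{equation*}
which tends to $0$. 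Hence $P(\frac1R\mathbb{F}_R(\mathbb{T}_k)\in[-1,1]^k) \geq 1 - 2k e^{-R/(2C_{T,\Gamma,\sigma})} \geq \tfrac12$ once $R$ is large enough, and combining with the previous display gives $H^{\mathbb{T}_k}_R(g) \geq \tfrac12 e^{-m_g R}$ for all sufficiently large $R$.

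There is no serious obstacle here; the lemma is essentially a repackaging of the exponential tightness estimate together with the trivial pointwise lower bound $\exp\{Rg\} \geq e^{-m_g R}\ind_{[-1,1]^k}$. The only point requiring a word of care is the finiteness and positivity of $m_g$, which uses continuity and strict negativity of $g$ on $\R^k$; the rest is the same martingale Chernoff bound used for Proposition \ref{Prp exponential tightness}, now applied with threshold $R$ rather than a fixed multiple of $\sqrt{R}$.
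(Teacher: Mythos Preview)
Your proposal is correct and follows essentially the same approach as the paper: restrict to the event $\{\tfrac1R\mathbb{F}_R(\mathbb{T}_k)\in[-1,1]^k\}$, use the pointwise bound $g\geq -m_g$ there, and then show this event has probability at least $\tfrac12$ for large $R$ via a union bound and the martingale exponential inequality from the proof of Proposition~\ref{Prp exponential tightness}. The paper merely cites that argument to conclude $P(|F_R(t_i)|>R)\to 0$, whereas you write out the explicit Chernoff bound, but the content is identical.
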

\begin{proof}
By a simple estimate, 
\begin{align*}
    H^{\mathbb{T}_k}_R(g) 
    \geq e^{-m_gR}P\left( \frac{1}{R}\mathbb{F}_{R}(\mathbb{T}_k) \in [-1,1]^k \right)
    \geq e^{-m_gR}\left(1- \sum_{i=1}^k P\left( |F_{R}(t_i)| > R \right) \right).
\end{align*}
Since $\lim_{R \to \infty}P\left( |F_{R}(t_i)| > R \right) = 0$ by the same argument as in the proof of Proposition \ref{Prp exponential tightness}, we have 
\begin{equation*}
    1- \sum_{i=1}^k P\left( |F_{R}(t_i)| > R \right)  > \frac{1}{2}, \qquad \text{for sufficiently large $R$,}
\end{equation*}
and the lemma follows.
\end{proof}

To show that $R \mapsto -\log H^{\mathbb{T}_k}_R(g)$ is approximately subadditive, we begin by evaluating $H^{\mathbb{T}_k}_{L+R}(g)$.

\begin{Lem}
\label{Lem application of reverse HI}
Let $\Theta, L, R >0$ and $p \in (0,1)$.
It holds
\begin{equation}
    H^{\mathbb{T}_k}_{L+R}(g) 
    \geq 
    \frac{\E\left[\exp{\left\{Lg\left(\frac{1}{L}\mathbb{F}_{L}(\mathbb{T}_k)\right) + Rg\left( \frac{1}{R}\mathbb{F}_{L+\Theta + R}^{L+\Theta} (\mathbb{T}_k) \right)  \right\}} \right]^{\frac{1}{p}}}{
    \E\left[\exp{\left\{ \frac{p}{1-p}\cdot \mathrm{Lip}(g) \left|\mathbb{F}_{L+\Theta}^{L}(\mathbb{T}_k) - \mathbb{F}_{L+R+\Theta}^{L+R}(\mathbb{T}_k) \right| \right\}} \right]^{\frac{1-p}{p}}}. \label{l bound 1}
\end{equation}
\end{Lem}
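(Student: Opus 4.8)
The plan is to decompose $\mathbb{F}_{L+R}(\mathbb{T}_k)$ into a "near" block, a "far" block, and a small "bridge" error, and then apply a reverse Hölder inequality. First I would note the exact additivity of spatial integrals: for each $i$,
\[
F_{L+R}(t_i) = F_L(t_i) + F_{L+\Theta+R}^{L+\Theta}(t_i) - \big(F_{L+\Theta}^L(t_i) - F_{L+R+\Theta}^{L+R}(t_i)\big),
\]
where the last parenthesis accounts for the fact that $[0,L]\cup[L+\Theta,L+\Theta+R]$ overshoots $[0,L+R]$ by exactly the interval $[L+R,L+R+\Theta]$ and undershoots by $[L,L+\Theta]$ — so that bracketed difference is the "bridge" term $E_i \coloneqq F_{L+\Theta}^L(t_i) - F_{L+R+\Theta}^{L+R}(t_i)$. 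Writing $E=(E_1,\dots,E_k)=\mathbb{F}_{L+\Theta}^L(\mathbb{T}_k)-\mathbb{F}_{L+R+\Theta}^{L+R}(\mathbb{T}_k)$, this gives $\tfrac1{L+R}\mathbb{F}_{L+R}(\mathbb{T}_k)$ as $\tfrac1{L+R}$ times the sum of the three pieces.

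Next I would use the Lipschitz bound on $g$ to peel off the bridge term:
\[
(L+R)\,g\!\left(\tfrac{1}{L+R}\mathbb{F}_{L+R}(\mathbb{T}_k)\right) \ge (L+R)\,g\!\left(\tfrac{\mathbb{F}_L(\mathbb{T}_k) + \mathbb{F}_{L+\Theta+R}^{L+\Theta}(\mathbb{T}_k)}{L+R}\right) - \mathrm{Lip}(g)\,|E|,
\]
and then, since $g$ is concave and $\tfrac{\mathbb{F}_L+\mathbb{F}_{L+\Theta+R}^{L+\Theta}}{L+R}$ is the convex combination $\tfrac{L}{L+R}\cdot\tfrac{\mathbb{F}_L}{L} + \tfrac{R}{L+R}\cdot\tfrac{\mathbb{F}_{L+\Theta+R}^{L+\Theta}}{R}$ of the two normalized block averages, concavity gives
\[
(L+R)\,g\!\left(\tfrac{\mathbb{F}_L + \mathbb{F}_{L+\Theta+R}^{L+\Theta}}{L+R}\right) \ge L\,g\!\left(\tfrac{1}{L}\mathbb{F}_L(\mathbb{T}_k)\right) + R\,g\!\left(\tfrac1R\mathbb{F}_{L+\Theta+R}^{L+\Theta}(\mathbb{T}_k)\right).
\]
Exponentiating, taking expectations, and combining the last two displays yields
\[
H^{\mathbb{T}_k}_{L+R}(g) \ge \E\!\left[\exp\!\Big\{ L g(\tfrac1L\mathbb{F}_L) + R g(\tfrac1R\mathbb{F}_{L+\Theta+R}^{L+\Theta}) \Big\}\,e^{-\mathrm{Lip}(g)|E|}\right].
\]

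Finally I would apply the reverse Hölder inequality: for $p\in(0,1)$ and nonnegative $X,Y$ with $X=XY^{-1}\cdot Y$, one has $\E[X]\ge \E[(XY^{-1})^{1/p}]^{p}\,\E[Y^{p/(p-1)}]^{(p-1)/p}$; with $p/(p-1) = -p/(1-p)<0$ this rearranges (taking $X$ the integrand above, $Y = e^{-\mathrm{Lip}(g)|E|}$, so $XY^{-1} = \exp\{Lg(\cdot)+Rg(\cdot)\}$) to exactly the claimed bound \eqref{l bound 1}. The only points needing care are: checking the sign conventions in the reverse Hölder exponents so that the ratio comes out with exponents $1/p$ in the numerator and $(1-p)/p$ in the denominator as stated; verifying the interval arithmetic for the bridge term (the main bookkeeping obstacle, though elementary); and confirming all the relevant random variables lie in $\mathbb{D}^{1,2}\subset L^1(\Omega)$ with finite exponential moments so the manipulations are legitimate — the latter follows from the exponential tightness estimates already established via the martingale exponential inequality in Proposition \ref{Prp exponential tightness}. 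The genuinely substantive ingredient is the interplay of concavity of $g$ with the additivity of the spatial average; the reverse Hölder step is then routine.
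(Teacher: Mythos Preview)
Your overall strategy is exactly the paper's: decompose $\mathbb{F}_{L+R}$ into the two blocks plus a bridge term, use Lipschitz continuity to peel off the bridge, use concavity to split the remaining argument, and then apply a reverse H\"older inequality. The interval bookkeeping is fine up to a harmless sign slip (the bridge term enters with a $+$, not a $-$; since only $|E|$ is used this is irrelevant).

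The gap is in the reverse H\"older step. The inequality you write down,
\[
\E[X]\ \ge\ \bigl(\E[(XY^{-1})^{1/p}]\bigr)^{p}\,\bigl(\E[Y^{p/(p-1)}]\bigr)^{(p-1)/p},
\]
is false in general: the exponents $1/p$ and $p/(p-1)$ are not H\"older conjugate (their reciprocals sum to $p+1-1/p\neq 1$), and e.g.\ with $p=\tfrac12$ and $X=Y^{-1}$ it would force $(\E[Y^{-1}])^{4}\ge \E[Y^{-4}]$, contradicting Jensen. Even if it held, its numerator $(\E[(\exp\{Lg+Rg\})^{1/p}])^{p}$ is not the same as the lemma's $(\E[\exp\{Lg+Rg\}])^{1/p}$. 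The correct application is to write
\[
AB \;=\; A^{1/p}\cdot\bigl(A^{(p-1)/p}B\bigr),\qquad A=e^{Lg(\cdot)+Rg(\cdot)},\quad B=e^{-\mathrm{Lip}(g)|E|},
\]
and apply reverse H\"older with exponent $p$ to get
\[
\E[AB]\ \ge\ (\E[A])^{1/p}\,\bigl(\E[\,A\cdot B^{p/(p-1)}\,]\bigr)^{(p-1)/p}.
\]
This still carries an extra factor of $A$ inside the second expectation. One then needs the standing assumption $g<0$ (fixed just before the lemma) so that $A\le 1$, which allows dropping that factor since the outer exponent $(p-1)/p$ is negative. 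The paper invokes $g<0$ explicitly at precisely this point; your write-up does not, and without it the step does not close.
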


\begin{proof}
By the Lipschitz continuity of $g$, 
\small
\begin{align*}
    &\left| g\left(\frac{1}{L+R}\mathbb{F}_{L+R}(\mathbb{T}_k)\right) - g\left(\frac{L}{L+R}\cdot \frac{1}{L}\mathbb{F}_L(\mathbb{T}_k)  
    + \frac{R}{L+R}\cdot \frac{1}{R}\mathbb{F}^{L+\Theta}_{L+\Theta + R}(\mathbb{T}_k)\right) \right|
    \leq \frac{\mathrm{Lip}(g)}{L+R}\left| \mathbb{F}_{L+\Theta}^L(\mathbb{T}_k) - \mathbb{F}_{L+R+\Theta}^{L+ R}(\mathbb{T}_k) \right|.
\end{align*}
\normalsize
Hence, the concavity of $g$ yields
\small
\begin{align}
    H^{\mathbb{T}_k}_{L+R}(g)
    \geq 
    \E\left[\exp{\left\{Lg\left(\frac{1}{L}\mathbb{F}_{L}(\mathbb{T}_k)\right) + Rg\left( \frac{1}{R}\mathbb{F}_{L+\Theta + R}^{L+\Theta} (\mathbb{T}_k) \right)  \right\}} \exp{\left\{ - \mathrm{Lip}(g) \left|\mathbb{F}_{L+\Theta}^{L}(\mathbb{T}_k) - \mathbb{F}_{L+R+\Theta}^{L+R}(\mathbb{T}_k) \right| \right\}} \right] \label{r1}.
\end{align}
\normalsize
Applying a reverse H\"{o}lder inequality (\textit{cf.} \cite[THEOREM 2.12]{AdamsFournier}) to the right-hand side of \eqref{r1}, we obtain 
\begin{align*}
    H^{\mathbb{T}_k}_{L+R}(g)
    \geq 
    \frac{\E\left[\exp{\left\{L g\left(\frac{1}{L}\mathbb{F}_{L}(\mathbb{T}_k)\right) + R g\left( \frac{1}{R}\mathbb{F}_{L+\Theta + R}^{L+\Theta} (\mathbb{T}_k) \right)  \right\}} \right]^{\frac{1}{p}}}{
    \E\left[\exp{\left\{ \frac{p}{1-p}\cdot \mathrm{Lip}(g)  \left|\mathbb{F}_{L+\Theta}^{L}(\mathbb{T}_k) - \mathbb{F}_{L+R+\Theta}^{L+R}(\mathbb{T}_k) \right| \right\}} \right]^{\frac{1-p}{p}}},
\end{align*}
where we use the fact that $g(x) < 0$ for all $x \in \R^k$. 
\end{proof}

From now on, we fix $\alpha \in (\frac{1}{2 \land \eta}, 1)$ and $\beta > 0$ satisfying $\alpha + \beta <1$. 
Choosing 
\begin{equation*}
    \Theta = (L+R)^{\alpha} \quad 
    \text{and} \quad p = \frac{(L+R)^\beta}{(L+R)^\beta + 1} \in (0,1)
\end{equation*}
in Lemma \ref{Lem application of reverse HI}, we further estimate from below the right-hand side of \eqref{l bound 1}, \textit{i.e.}
\begin{equation}
\label{q1}
    \frac{\E\left[\exp{\left\{Lg\left(\frac{1}{L}\mathbb{F}_{L}(\mathbb{T}_k)\right) + Rg\left( \frac{1}{R}\mathbb{F}_{L+(L+R)^{\alpha} + R}^{L+(L+R)^{\alpha}} (\mathbb{T}_k) \right)  \right\}} \right]^{1 + \frac{1}{(L+R)^{\beta}}}}{
    \E\left[\exp{\left\{ \mathrm{Lip}(g) (L+R)^{\beta} \left|\mathbb{F}_{L+(L+R)^{\alpha}}^{L}(\mathbb{T}_k) - \mathbb{F}_{L+R+(L+R)^{\alpha}}^{L+R}(\mathbb{T}_k) \right| \right\}} \right]^{\frac{1}{(L+R)^{\beta}}}}.
\end{equation}
We first bound from above the denominator of \eqref{q1}.
\begin{Lem}
\label{Lem denominator estimate}
Under the same setting as in Lemma \ref{Lem application of reverse HI}, 
\begin{align}
    &\E\left[\exp{\left\{ \mathrm{Lip}(g) (L+R)^{\beta} \left|\mathbb{F}_{L+(L+R)^{\alpha}}^{L}(\mathbb{T}_k) - \mathbb{F}_{L+R+(L+R)^{\alpha}}^{L+R}(\mathbb{T}_k) \right| \right\}} \right]^{\frac{1}{(L+R)^{\beta}}}\\
    &\leq \left(C_{\mathrm{Lip}(g), T, \Gamma, \sigma}(L+R)^{\frac{\alpha}{2}+\beta}\right)^{\frac{1}{(L+R)^{\beta}}}\exp{\left\{ C_{\mathrm{Lip}(g), T, \Gamma, \sigma}(L+R)^{\alpha + \beta} \right\}}.
\end{align}
\end{Lem}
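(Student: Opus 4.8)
The plan is to reduce the bound to a Bernstein-type exponential inequality for continuous martingales, exploiting the boundedness of $\sigma$ to control quadratic variations pathwise. Writing $\Theta \coloneqq (L+R)^{\alpha}$ and $I_1 \coloneqq [L,L+\Theta]$, $I_2 \coloneqq [L+R,L+R+\Theta]$ (two intervals of length $\Theta$), the key observation is that for each fixed $i$ the $i$-th coordinate $F^{L}_{L+\Theta}(t_i) - F^{L+R}_{L+R+\Theta}(t_i)$ of $\mathbb{F}^{L}_{L+\Theta}(\mathbb{T}_k) - \mathbb{F}^{L+R}_{L+R+\Theta}(\mathbb{T}_k)$ is the terminal value of a continuous square-integrable martingale whose bracket is bounded \emph{almost surely} by a constant multiple of $\Theta$. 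From this a.s.\ bracket bound one obtains Gaussian-type tail bounds, hence exponential moment bounds, for each coordinate; combining the $k$ coordinates by a union bound and then substituting $\Theta = (L+R)^{\alpha}$ and taking the $(L+R)^{-\beta}$-th power yields the stated estimate.

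First I would record the martingale representation of the spatial averages. Since $\E[u(t,x)] = \cI(t)$, the mild formulation \eqref{sol eq} together with a stochastic Fubini argument (exactly as in the proof of Proposition~\ref{Prp exponential tightness}) gives $F^{a}_{b}(t) = \int_0^t\int_{\R}(G(t-s)\ast\ind_{[a,b]})(y)\,\sigma(u(s,y))\,W(ds,dy)$ for $a<b$. Hence, for each fixed $i \in \{1,\dots,k\}$, the process
\[
N_{i,r} \coloneqq \int_0^r\int_{\R}\big[(G(t_i-s)\ast\ind_{I_1})(y) - (G(t_i-s)\ast\ind_{I_2})(y)\big]\,\sigma(u(s,y))\,W(ds,dy), \qquad r \in [0,t_i],
\]
is a continuous square-integrable martingale with $N_{i,t_i} = F^{L}_{L+\Theta}(t_i) - F^{L+R}_{L+R+\Theta}(t_i)$ and bracket $\langle N_i\rangle_{t_i} = \int_0^{t_i}\int_{\R^2} h^{(i)}_s(y)h^{(i)}_s(z)\sigma(u(s,y))\sigma(u(s,z))\Gamma(y-z)\,dy\,dz\,ds$, where $h^{(i)}_s \coloneqq G(t_i-s)\ast\ind_{I_1} - G(t_i-s)\ast\ind_{I_2}$. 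The crucial step is to bound $\langle N_i\rangle_{t_i}$ almost surely by $C_{T,\Gamma,\sigma}\Theta$: replacing $|\sigma|$ by $\norm{\sigma}_{\infty}$ and $|h^{(i)}_s|$ by $\phi^s_1 + \phi^s_2$ with $\phi^s_j \coloneqq G(t_i-s)\ast\ind_{I_j} \geq 0$, and using Young's inequality in the form $\int_{\R^2}\phi^s_j(y)\phi^s_l(z)\Gamma(y-z)\,dy\,dz \leq \norm{\phi^s_j}_{\infty}\norm{\Gamma}_{L^1(\R)}\norm{\phi^s_l}_{L^1(\R)}$ together with $\norm{\phi^s_j}_{\infty} \leq \norm{G(t_i-s)}_{L^1(\R)} \leq C_T$ and $\norm{\phi^s_j}_{L^1(\R)} = |I_j|\,\norm{G(t_i-s)}_{L^1(\R)} \leq C_T\Theta$, one sums the four terms and integrates over $s \in [0,t_i]$ to obtain $\langle N_i\rangle_{t_i} \leq C_{T,\Gamma,\sigma}\Theta$ a.s. (In case~(I) the double integral against $\delta_0$ is read through the convention of Section~\ref{section Introduction}, and the same bound holds.)

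With the a.s.\ bracket bound $\langle N_i\rangle_{t_i} \leq b_0 \coloneqq C_{T,\Gamma,\sigma}(L+R)^{\alpha}$, the exponential inequality for martingales (\cite[Section~A.2]{Nualart_book}, already used in Proposition~\ref{Prp exponential tightness}) gives $P(|N_{i,t_i}| > a) \leq 2e^{-a^2/(2b_0)}$ for all $a>0$, whence, via $\E[e^{\theta|N_{i,t_i}|}] = 1 + \theta\int_0^\infty e^{\theta a}P(|N_{i,t_i}|>a)\,da$ and a Gaussian integral estimate, $\E[e^{\theta|N_{i,t_i}|}] \leq 1 + 2\theta\sqrt{2\pi b_0}\,e^{\theta^2 b_0/2}$ for $\theta>0$. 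To assemble the bound I would use $|v| \leq \sqrt{k}\,\max_{1\le i\le k}|v_i|$ for $v\in\R^k$, so that
\[
\exp\!\big\{\mathrm{Lip}(g)(L+R)^{\beta}\,|\mathbb{F}^{L}_{L+\Theta}(\mathbb{T}_k) - \mathbb{F}^{L+R}_{L+R+\Theta}(\mathbb{T}_k)|\big\} \leq \sum_{i=1}^{k}\exp\!\big\{\sqrt{k}\,\mathrm{Lip}(g)(L+R)^{\beta}\,|N_{i,t_i}|\big\};
\]
taking expectations and applying the previous estimate with $\theta = \sqrt{k}\,\mathrm{Lip}(g)(L+R)^{\beta}$ and $b_0 = C_{T,\Gamma,\sigma}(L+R)^{\alpha}$, the relations $\theta\sqrt{b_0} = C_{\mathrm{Lip}(g),T,\Gamma,\sigma}(L+R)^{\beta+\alpha/2}$ and $\theta^2 b_0/2 = C_{\mathrm{Lip}(g),T,\Gamma,\sigma}(L+R)^{2\beta+\alpha}$ readily give $\E[\exp\{\mathrm{Lip}(g)(L+R)^{\beta}\,|\mathbb{F}^{L}_{L+\Theta}(\mathbb{T}_k) - \mathbb{F}^{L+R}_{L+R+\Theta}(\mathbb{T}_k)|\}] \leq C_{\mathrm{Lip}(g),T,\Gamma,\sigma}(L+R)^{\alpha/2+\beta}\exp\{C_{\mathrm{Lip}(g),T,\Gamma,\sigma}(L+R)^{2\beta+\alpha}\}$. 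Raising both sides to the power $(L+R)^{-\beta}$ gives precisely the asserted inequality.

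I expect the only genuine obstacle to be the almost-sure bracket bound described above, namely obtaining $\langle N_i\rangle_{t_i} \leq C_{T,\Gamma,\sigma}\Theta$ \emph{pathwise} and of the correct order in $\Theta$; this is what legitimizes the exponential martingale inequality and is precisely where the boundedness of $\sigma$ from Assumption~\ref{Assum 1}(1) enters. The remaining ingredients --- the martingale representation (a routine stochastic Fubini), the Gaussian-tail integration, and the union bound over the $k$ coordinates --- are standard, the only mild care being the bookkeeping of constants and the treatment of the white-noise case through the stated conventions.
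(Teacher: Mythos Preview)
Your proposal is correct and follows essentially the same approach as the paper: represent each coordinate as the terminal value of a continuous martingale whose bracket is almost surely bounded by $C_{T,\Gamma,\sigma}(L+R)^{\alpha}$ thanks to the boundedness of $\sigma$, apply the exponential martingale inequality to get Gaussian tails, convert these into an exponential moment bound via the layer-cake identity, and combine the $k$ coordinates through $|v|\leq\sqrt{k}\max_i|v_i|$. The only cosmetic difference is that the paper first writes $\E[e^{cX}]=c\int_{\R}e^{cr}P(X\geq r)\,dr$ and then bounds the tail, whereas you reverse the order; your more explicit justification of the bracket bound (via Young's convolution inequality and $\|G(t)\|_{L^1}\leq C_T$) is a welcome elaboration of what the paper summarizes by ``as in the proof of Proposition~\ref{Prp exponential tightness}.''
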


\begin{proof}
Since $\E[e^{cX}] = c\int_{\R}e^{cr}P(X \geq r)dr$ holds for any random variable $X$ and for any constant $c>0$, we get 
\begin{align}
    &\E\left[\exp{\left\{ \mathrm{Lip}(g) (L+R)^{\beta} \left|\mathbb{F}_{L+(L+R)^{\alpha}}^{L}(\mathbb{T}_k) - \mathbb{F}_{L+R+(L+R)^{\alpha}}^{L+R}(\mathbb{T}_k) \right| \right\}} \right] \\
    &\leq \mathrm{Lip}(g) (L+R)^{\beta}\sum_{i=1}^{k}\int_{\R}e^{\mathrm{Lip}(g) (L+R)^{\beta} r}P\left(\left|F_{L+(L+R)^{\alpha}}^{L}(t_i) - F_{L+R+(L+R)^{\alpha}}^{L+R}(t_i) \right| \geq \frac{r}{\sqrt{k}} \right)dr, \label{after fubini}
\end{align}
and we need to evaluate the probability inside the integral. 
If we let
\begin{equation*}
    M_t(r) \coloneqq \int_0^{r \land t} \int_{\R}\left( \ind_{[L,L+(L+R)^{\alpha}]}\ast G(t-s)(y) - \ind_{[L+R, L+R+(L+R)^{\alpha}]}\ast G(t-s)(y) \right)\sigma(u(s,y))W(ds,dy),
\end{equation*}
then $\{M_{t}(r)\}_{r \in [0,T]}$ is a square-integrable continuous martingale such that $M_{t}(t) = F_{L+(L+R)^{\alpha}}^L(t) - F_{L+R+(L+R)^{\alpha}}^{L+R}(t)$ and
\begin{align}
    \langle M_{t}(\cdot) \rangle_t \leq C_T \norm{\sigma}_{\infty}^2\norm{\Gamma}_{L^1(\R)}(L+R)^{\alpha} \eqqcolon C_{T, \Gamma, \sigma}(L+R)^{\alpha}
\end{align}
as in the proof of Proposition \ref{Prp exponential tightness}.
Applying the exponential inequality for martingales, we obtain
\begin{equation}
    P\left(\left|F_{L+(L+R)^{\alpha}}^{L}(t) - F_{L+R+(L+R)^{\alpha}}^{L+R}(t) \right| \geq r \right) \leq 2 e^{-\frac{r^2}{2C_{T,\Gamma, \sigma} (L+R)^{\alpha}}}. \label{tail estimate 1}
\end{equation}
Hence, combining \eqref{after fubini} and \eqref{tail estimate 1}, we have
\begin{align*}
    &\E\left[\exp{\left\{ \mathrm{Lip}(g) (L+R)^{\beta}\left|\mathbb{F}_{L+(L+R)^{\alpha}}^{L}(\mathbb{T}_k) - \mathbb{F}_{L+R+(L+R)^{\alpha}}^{L+R}(\mathbb{T}_k) \right| \right\}} \right] \\
    &\leq \sqrt{8\pi C_{T, \Gamma, \sigma}} k^{\frac{3}{2}} \mathrm{Lip}(g)(L+R)^{\frac{\alpha}{2} + \beta}\exp{\left\{\frac{ k \mathrm{Lip}(g)^2C_{T, \Gamma, \sigma}}{2} (L+R)^{\alpha + 2\beta}\right\}},
\end{align*}
and the lemma follows.
\end{proof}

Next, we bound from below the numerator of \eqref{q1} using the covariance estimate in Proposition \ref{Prp Cov estimate}.

\begin{Lem}
\label{Lem numerator estimate}
For sufficiently large $L$ and $R$, 
\begin{align*}
    &\E\left[\exp{\left\{Lg\left(\frac{1}{L}\mathbb{F}_{L}(\mathbb{T}_k)\right) + Rg\left( \frac{1}{R}\mathbb{F}_{L+(L+R)^{\alpha} + R}^{L+(L+R)^{\alpha}} (\mathbb{T}_k) \right)  \right\}} \right]^{1 + \frac{1}{(L+R)^{\beta}}} 
    \geq
    \frac{1}{16}H^{\mathbb{T}_k}_L(g) H^{\mathbb{T}_k}_R(g) e^{-m_g(L+R)^{1-\beta}}
\end{align*}
\end{Lem}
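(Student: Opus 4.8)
The plan is to lower-bound the numerator of \eqref{q1} by decorrelating the two factors
$Lg(\frac{1}{L}\bF_L(\bT_k))$ and $Rg(\frac{1}{R}\bF_{L+\Theta+R}^{L+\Theta}(\bT_k))$ at the level of their exponential moments, using the covariance estimate of Proposition \ref{Prp Cov estimate} with $\Theta = (L+R)^{\alpha}$. First I would set $X \coloneqq \exp\{Lg(\frac{1}{L}\bF_L(\bT_k))\}$ and $Y \coloneqq \exp\{Rg(\frac{1}{R}\bF_{L+\Theta+R}^{L+\Theta}(\bT_k))\}$, so that $\E[XY] = \E[X]\E[Y] + \mathrm{Cov}(X,Y) = H_L^{\bT_k}(g)H_R^{\bT_k}(g) + \mathrm{Cov}(X,Y)$. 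Since $g<0$, both $X$ and $Y$ are bounded by $1$; moreover, because $g$ is Lipschitz and bounded above, the maps $x \mapsto \exp\{L g(x/L)\}$ and $x \mapsto \exp\{R g(x/R)\}$ are Lipschitz on $\R^k$ with constants controlled by $\mathrm{Lip}(g)$ (the factor $L$ or $R$ in the exponent cancels the $1/L$ or $1/R$ inside, and the exponential has derivative bounded by $1$ since its argument is negative). Hence $X = \widetilde\varphi(\bF_L(\bT_k))$ and $Y = \widetilde\psi(\bF_{L+\Theta+R}^{L+\Theta}(\bT_k))$ with $\mathrm{Lip}(\widetilde\varphi), \mathrm{Lip}(\widetilde\psi) \leq C_{\mathrm{Lip}(g)}$, so Proposition \ref{Prp Cov estimate} gives
\[
    |\mathrm{Cov}(X,Y)| \leq C_{T,\mathrm{Lip}(g)} k^2 L R \exp\{-C_{T,\eta,\rho}\Theta^{2\land\eta}\} = C_{T,\mathrm{Lip}(g)} k^2 L R \exp\{-C_{T,\eta,\rho}(L+R)^{\alpha(2\land\eta)}\}.
\]

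The next step is to absorb this error into a fixed fraction of the main term. By Lemma \ref{Lem les 5}, for $L,R$ large we have $H_L^{\bT_k}(g) \geq \tfrac12 e^{-m_g L}$ and $H_R^{\bT_k}(g) \geq \tfrac12 e^{-m_g R}$, hence $H_L^{\bT_k}(g) H_R^{\bT_k}(g) \geq \tfrac14 e^{-m_g(L+R)}$. The point is that $\alpha(2\land\eta) > 1$ by our choice $\alpha \in (\frac{1}{2\land\eta},1)$, so $\exp\{-C_{T,\eta,\rho}(L+R)^{\alpha(2\land\eta)}\}$ decays faster than any exponential in $L+R$, while the prefactor $k^2 LR$ is only polynomial; therefore $|\mathrm{Cov}(X,Y)| \leq \tfrac12 H_L^{\bT_k}(g) H_R^{\bT_k}(g)$ for all sufficiently large $L,R$. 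Consequently $\E[XY] \geq \tfrac12 H_L^{\bT_k}(g) H_R^{\bT_k}(g)$. Finally, since $\E[XY] \leq 1$ (as $0 < X, Y \leq 1$), raising to the power $1 + (L+R)^{-\beta} > 1$ only decreases the value, and
\[
    (\E[XY])^{1+\frac{1}{(L+R)^{\beta}}} \geq \E[XY] \cdot (\E[XY])^{\frac{1}{(L+R)^{\beta}}} \geq \tfrac12 H_L^{\bT_k}(g) H_R^{\bT_k}(g) \cdot \left(\tfrac14 e^{-m_g(L+R)}\right)^{\frac{1}{(L+R)^{\beta}}}.
\]
Now $(\tfrac14)^{(L+R)^{-\beta}} \to 1$, so it is $\geq \tfrac18$ for large $L+R$, and $e^{-m_g(L+R)^{1-\beta}}$ is exactly the remaining factor; combining the constants $\tfrac12\cdot\tfrac18 = \tfrac{1}{16}$ gives the claimed bound.

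I expect the only genuinely delicate point to be the Lipschitz estimate for $\widetilde\varphi$ and $\widetilde\psi$: one must check carefully that $\mathrm{Lip}(x \mapsto e^{Lg(x/L)}) \leq \mathrm{Lip}(g)$ using that $g \leq 0$ (so the exponential factor is $\leq 1$) and the chain rule, so that the constant in Proposition \ref{Prp Cov estimate} is genuinely independent of $L$ and $R$. Once that is in place, the rest is just bookkeeping: separating the decorrelation error from the main term via Lemma \ref{Lem les 5}, observing that $\alpha(2\land\eta)>1$ makes the Gaussian-type decay beat the polynomial prefactor $k^2 LR$ and the exponential $e^{-m_g(L+R)}$, and checking that the exponent $1+(L+R)^{-\beta}$ and the stray constant $(\tfrac14)^{(L+R)^{-\beta}}$ cause no trouble as $L+R\to\infty$. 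No other earlier result is needed beyond Proposition \ref{Prp Cov estimate} and Lemma \ref{Lem les 5}.
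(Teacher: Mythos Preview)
Your approach is correct and mirrors the paper's proof almost exactly: apply Proposition~\ref{Prp Cov estimate} to the Lipschitz functions $x\mapsto e^{Lg(x/L)}$ and $x\mapsto e^{Rg(x/R)}$ (whose Lipschitz constants are indeed bounded by $\mathrm{Lip}(g)$, independently of $L,R$), absorb the covariance error using Lemma~\ref{Lem les 5} and the fact that $\alpha(2\land\eta)>1$, then handle the exponent $1+(L+R)^{-\beta}$ via Lemma~\ref{Lem les 5} again.

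The one genuine omission is in the identification $\E[X]\E[Y] = H_L^{\bT_k}(g)H_R^{\bT_k}(g)$. Your $Y$ involves $\bF_{L+\Theta+R}^{L+\Theta}(\bT_k)$, not $\bF_R(\bT_k)=\bF_R^0(\bT_k)$, so to conclude $\E[Y]=H_R^{\bT_k}(g)$ you must invoke the shift invariance of Lemma~\ref{Lem shift invariance}. Your closing sentence that ``no other earlier result is needed beyond Proposition~\ref{Prp Cov estimate} and Lemma~\ref{Lem les 5}'' is therefore not quite accurate; the paper explicitly cites Lemma~\ref{Lem shift invariance} at this step. (A minor numerical slip: from $\E[XY]\geq\tfrac12 H_L^{\bT_k}(g)H_R^{\bT_k}(g)\geq\tfrac18 e^{-m_g(L+R)}$ you get $\tfrac18$ rather than $\tfrac14$ inside the last display, but this is harmless since $(\tfrac18)^{(L+R)^{-\beta}}\to 1$ as well.)
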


\begin{proof}
By Proposition \ref{Prp Cov estimate}, Lemmas \ref{Lem les 5} and \ref{Lem shift invariance}, we have for sufficiently large $L$ and $R$, 
\begin{align*}
    &\E\left[\exp{\left\{Lg\left(\frac{1}{L}\mathbb{F}_{L}(\mathbb{T}_k)\right) + Rg\left( \frac{1}{R}\mathbb{F}_{L+(L+R)^{\alpha} + R}^{L+(L+R)^{\alpha}} (\mathbb{T}_k) \right)  \right\}} \right] \\
    &\geq 
    \E\left[\exp{\left\{L g\left(\frac{1}{L}\mathbb{F}_L(\mathbb{T}_k)\right)\right\}}\right]\E\left[\exp{\left\{Rg\left( \frac{1}{R}\mathbb{F}_{L+(L+R)^{\alpha} + R}^{L+(L+R)^{\alpha}} (\mathbb{T}_k) \right)\right\}}\right] - C_{\mathrm{Lip}(g), T}k^2LRe^{-C_{T, \eta, \rho}(L+R)^{(2 \land \eta)\alpha}}\\
    &\geq H^{\mathbb{T}_k}_L(g) H^{\mathbb{T}_k}_R(g) \left(1- 4C_{\mathrm{Lip}(g), T}k^2(L+R)^2 e^{-C_{T, \eta, \rho}(L+R)^{(2 \land \eta)\alpha} + m_g(L+R)}\right).
\end{align*}
Because $(2\land \eta)\alpha > 1$ and $\displaystyle \lim_{x \to \infty}x^2 e^{-C_{T, \eta, \rho}x^{(2\land \eta)\alpha} + m_g x} = 0$,
we can assume 
\begin{equation*}
    \frac{1}{2} < 1- 4C_{\mathrm{Lip}(g), T}k^2(L+R)^2 e^{-C_{T, \eta, \rho}(L+R)^{(2\land \eta)\alpha} + m(L+R)} < 1
\end{equation*}
by taking $L$ and $R$ large enough.
Thus, we can apply Lemma \ref{Lem les 5} again to obtain
\begin{align*}
    &\E\left[\exp{\left\{Lg\left(\frac{1}{L}\mathbb{F}_{L}(\mathbb{T}_k)\right) + Rg\left( \frac{1}{R}\mathbb{F}_{L+(L+R)^{\alpha} + R}^{L+(L+R)^{\alpha}} (\mathbb{T}_k) \right)  \right\}} \right]^{1 + \frac{1}{(L+R)^{\beta}}} \\
    &\geq 
    \left(\frac{1}{2}\right)^{1 + \frac{1}{(L+R)^{\beta}}}H_L^{\mathbb{T}_k}(g) H_R^{\mathbb{T}_k}(g) \left(\frac{1}{4}e^{-m_g(L+R)}\right)^{\frac{1}{(L+R)^{\beta}}}\\
    &\geq \frac{1}{16}H_L^{\mathbb{T}_k}(g) H_R^{\mathbb{T}_k}(g) e^{-m_g(L+R)^{1-\beta}},
\end{align*}
which is the desired estimate.
\end{proof}

Now, we combine all the computations to show the approximate subadditivity of $R \mapsto -\log H_R^{\mathbb{T}_k}(g)$. 

\begin{Prp}
\label{Prp existence of the limit}
There exists $M > 0$ such that for any $L, R >M$, 
\begin{equation*}
    -\log H_{L+R}^{\mathbb{T}_k}(g) \leq - \log H_L^{\mathbb{T}_k}(g) - \log H_R^{\mathbb{T}_k}(g) + (m_g+1)(L+R)^{1-\beta} + C_{\mathrm{Lip}(g), T, \Gamma, \sigma}(L+R)^{\alpha + \beta}. 
\end{equation*}
Consequently, the limit $\dis \Lambda_g^{\mathbb{T}_k} = \lim_{R \to \infty} \frac{1}{R}\log H_R^{\mathbb{T}_k}(g)$ exists.
\end{Prp}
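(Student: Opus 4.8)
The plan is to feed the three preceding lemmas into Lemma~\ref{Lem application of reverse HI} and then invoke Hammersley's approximate subadditivity lemma (Lemma~\ref{Lem approximate subadditivity}). First I would take $\Theta = (L+R)^{\alpha}$ and $p = (L+R)^{\beta}/((L+R)^{\beta}+1)$ in Lemma~\ref{Lem application of reverse HI}, so that $H_{L+R}^{\mathbb{T}_k}(g)$ is bounded below by the ratio in \eqref{q1}. Bounding the numerator of \eqref{q1} from below by Lemma~\ref{Lem numerator estimate} and the denominator from above by Lemma~\ref{Lem denominator estimate}, one obtains, for all sufficiently large $L$ and $R$,
\begin{equation*}
    H_{L+R}^{\mathbb{T}_k}(g) \geq \frac{\tfrac{1}{16}\,H_L^{\mathbb{T}_k}(g)\,H_R^{\mathbb{T}_k}(g)\,e^{-m_g(L+R)^{1-\beta}}}{\bigl(C_{\mathrm{Lip}(g),T,\Gamma,\sigma}(L+R)^{\frac{\alpha}{2}+\beta}\bigr)^{\frac{1}{(L+R)^{\beta}}}\exp\bigl\{C_{\mathrm{Lip}(g),T,\Gamma,\sigma}(L+R)^{\alpha+\beta}\bigr\}}.
\end{equation*}

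Taking $-\log$ of both sides turns this into
\begin{align*}
    -\log H_{L+R}^{\mathbb{T}_k}(g) &\leq -\log H_L^{\mathbb{T}_k}(g) - \log H_R^{\mathbb{T}_k}(g) + m_g(L+R)^{1-\beta} + C_{\mathrm{Lip}(g),T,\Gamma,\sigma}(L+R)^{\alpha+\beta} \\
    &\qquad + \log 16 + \frac{1}{(L+R)^{\beta}}\log\bigl(C_{\mathrm{Lip}(g),T,\Gamma,\sigma}(L+R)^{\frac{\alpha}{2}+\beta}\bigr).
\end{align*}
Since $1-\beta>0$, the term $(L+R)^{1-\beta}$ diverges, while $\log 16$ is a fixed constant and $(L+R)^{-\beta}\log\bigl(C_{\mathrm{Lip}(g),T,\Gamma,\sigma}(L+R)^{\alpha/2+\beta}\bigr) \to 0$ as $L+R \to \infty$. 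Hence there is $M>0$ — which I also take large enough that Lemma~\ref{Lem numerator estimate} applies — such that for all $L,R>M$ the last two terms are together at most $(L+R)^{1-\beta}$, giving the displayed inequality of the proposition.

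For the conclusion I would apply Lemma~\ref{Lem approximate subadditivity} to $f(R) := -\log H_R^{\mathbb{T}_k}(g)$, which is real-valued and nonnegative because $0 < H_R^{\mathbb{T}_k}(g) \leq 1$ (recall $g<0$), with $c_f = M$ and
\begin{equation*}
    \zeta(x) := (m_g+1)\,x^{1-\beta} + C_{\mathrm{Lip}(g),T,\Gamma,\sigma}\,x^{\alpha+\beta}.
\end{equation*}
This $\zeta$ is non-decreasing on $(0,\infty)$ (both exponents are positive), and $\int_{1}^{\infty}\zeta(x)x^{-2}\,dx<\infty$: the first term contributes $\int_1^\infty x^{-1-\beta}dx<\infty$ since $\beta>0$, and the second contributes $\int_1^\infty x^{\alpha+\beta-2}dx<\infty$ since $\alpha+\beta<1<2$. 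Lemma~\ref{Lem approximate subadditivity} then yields the existence of $\lim_{R\to\infty} f(R)/R \in [-\infty,\infty)$, i.e. the existence of $\Lambda_g^{\mathbb{T}_k} = \lim_{R\to\infty} R^{-1}\log H_R^{\mathbb{T}_k}(g)$; moreover this limit is finite, since Lemma~\ref{Lem les 5} gives $f(R)/R \leq m_g + R^{-1}\log 2$.

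I do not expect a genuine obstacle here: the heavy lifting has already been done in Lemmas~\ref{Lem les 5}--\ref{Lem numerator estimate}, and what remains is bookkeeping. The only points needing a little care are collapsing the various "sufficiently large $L,R$" thresholds into a single $M$, and checking that the error exponents line up — which they do precisely because the choices $\alpha \in (\tfrac{1}{2\wedge\eta},1)$ and $\alpha+\beta<1$ were calibrated so that the error term $(L+R)^{\alpha+\beta}$ is sublinear (ensuring $\zeta(x)/x^2$ is integrable) while the covariance penalty $e^{-C_{T,\eta,\rho}(L+R)^{(2\wedge\eta)\alpha}}$ in Lemma~\ref{Lem numerator estimate} still beats the factor $e^{m_g(L+R)}$.
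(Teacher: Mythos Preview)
Your proof is correct and follows essentially the same route as the paper: combine Lemmas~\ref{Lem application of reverse HI}, \ref{Lem denominator estimate}, and \ref{Lem numerator estimate} with the specified choices of $\Theta$ and $p$, take $-\log$, absorb the $o((L+R)^{1-\beta})$ remainder terms, and apply Hammersley's lemma (Lemma~\ref{Lem approximate subadditivity}). You give a bit more detail in verifying the hypotheses of Lemma~\ref{Lem approximate subadditivity} and in noting finiteness of the limit via Lemma~\ref{Lem les 5}, but the argument is the same.
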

\begin{proof}
Take $L$ and $R$ large enough to apply Lemma \ref{Lem numerator estimate}. 
By Lemmas \ref{Lem application of reverse HI}, \ref{Lem denominator estimate}, and \ref{Lem numerator estimate}, we have
\begin{align*}
    \frac{H_{L+R}^{\mathbb{T}_k}(g)}{H_L^{\mathbb{T}_k}(g) H_R^{\mathbb{T}_k}(g)} 
    \geq \frac{1}{16\left(C_{\mathrm{Lip}(g), T, \Gamma, \sigma}(L+R)^{\frac{\alpha}{2}+\beta}\right)^{\frac{1}{(L+R)^{\beta}}}}\exp\left\{-m_g(L+R)^{1-\beta}-C_{\mathrm{Lip}(g), T, \Gamma, \sigma}(L+R)^{\alpha + \beta}\right\}
\end{align*}
It follows by letting $L+R$ large enough that
\begin{align*}
    -\log H_{L+R}^{\mathbb{T}_k}(g) 
    &\leq - \log H_L^{\mathbb{T}_k}(g) - \log H_R^{\mathbb{T}_k}(g) + (m_g+1)(L+R)^{1-\beta} + C_{\mathrm{Lip}(g), T, \Gamma, \sigma}(L+R)^{\alpha + \beta}.
\end{align*}
Since $1-\beta < 1$ and $\alpha + \beta <1$, the existence of the limit follows from Lemma \ref{Lem approximate subadditivity}.
\end{proof}

\subsection{Convexity of the rate function}
\label{subsection Convexity of the rate function}
By Propositions \ref{Prp Inverse Varadhan's lemma}, \ref{Prp exponential tightness}, and \ref{Prp existence of the limit}, we have already shown that $\{\mu_R^{\mathbb{T}_k}\}_{R>0}$ satisfies the LDP with speed $R$ and the good rate function
\begin{equation}
\label{rate function of mu_R^(t)}
    I^{\mathbb{T}_k}(x) = \sup_{f\in C_{\mathrm{b}}(\R^k)}\{f(x) - \Lambda_f^{\mathbb{T}_k}\},
\end{equation}
where 
\begin{equation*}
    \Lambda_f^{\mathbb{T}_k} = \lim_{R \to \infty}\frac{1}{R}\log 
    \E\left[\exp{\left\{R f\left(\frac{1}{R}\mathbb{F}_R(\mathbb{T}_k)\right)\right\}}\right], \quad f \in C_{\mathrm{b}}(\R^k).
\end{equation*}
In this section, we show the convexity of the rate function $I^{\mathbb{T}_k}$ in Proposition \ref{Prp convexity rate function} below.
It then follows from Proposition \ref{Prp convexity and duality} that $I^{\mathbb{T}_k}$ is the Fenchel-Legendre transform of 
\begin{equation}
\label{lambda t limit}
    \Lambda^{\mathbb{T}_k}(\lambda) = \lim_{R \to \infty}\frac{1}{R}\log \E[e^{\lambda \cdot \mathbb{F}_R(\mathbb{T}_k)}], \quad \lambda \in \R^k,
\end{equation}
thereby completing the proof of Theorem \ref{Thm main 1}.
Hereafter, we use the notations
\begin{equation*}
    \mathbb{B}_k(x,r) \coloneqq \{y \in \R^k : |y-x| < r\} \quad \text{and} \quad \mathbb{B}_k(r) \coloneqq \mathbb{B}_k(0,r), \quad x \in \R^k, r>0.
\end{equation*}

\begin{Lem}
\label{Lem convexity inequality}
If $x_1, x_2 \in \R^k$ and $\delta > 0$ satisfy 
\begin{equation*}
    \liminf_{R \to \infty}\frac{1}{R}\log \mu_R^{\mathbb{T}_k}\left(\mathbb{B}_k\left(x_i, \frac{\delta}{4}\right)\right) > -\infty, \quad i =1,2, 
\end{equation*}
then there exists $R_{x_1, x_2, \delta} > 0$ such that for any $R \geq R_{x_1, x_2, \delta}$, 
\begin{equation*}
    \mu_{2R}^{\mathbb{T}_k}\left(\mathbb{B}_k\left(\frac{x_1 + x_2}{2}, \delta \right)\right) \geq \frac{1}{2}\mu_{R}^{\mathbb{T}_k}\left(\mathbb{B}_k\left(x_1, \frac{\delta}{4} \right)\right)\mu_{R}^{\mathbb{T}_k}\left(\mathbb{B}_k\left(x_2, \frac{\delta}{4} \right)\right).
\end{equation*}
\end{Lem}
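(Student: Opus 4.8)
The plan is to realize the midpoint $\frac{x_1+x_2}{2}$ for the $2R$-average as a combination of the $[0,R]$-average and a shifted $[R+\Theta, 2R+\Theta]$-average, where the shift $\Theta$ is chosen large enough that the covariance estimate in Proposition \ref{Prp Cov estimate} decouples the two blocks. First I would write, for a gap parameter $\Theta>0$ to be chosen,
\begin{equation*}
    \frac{1}{2R}\mathbb{F}_{2R}(\mathbb{T}_k) = \frac{1}{2}\cdot\frac{1}{R}\mathbb{F}_R(\mathbb{T}_k) + \frac{1}{2}\cdot\frac{1}{R}\mathbb{F}_{R+\Theta+R}^{R+\Theta}(\mathbb{T}_k) + \frac{1}{2R}\Big(\mathbb{F}_{R+\Theta}^{R}(\mathbb{T}_k) - \mathbb{F}_{2R+\Theta}^{2R}(\mathbb{T}_k)\Big),
\end{equation*}
so that the discrepancy between $\frac{1}{2R}\mathbb{F}_{2R}(\mathbb{T}_k)$ and the convex combination of the two block averages is controlled by $\frac{1}{2R}|\mathbb{F}_{R+\Theta}^{R}(\mathbb{T}_k) - \mathbb{F}_{2R+\Theta}^{2R}(\mathbb{T}_k)|$. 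By the martingale/exponential-inequality bound already used in the proofs of Proposition \ref{Prp exponential tightness} and Lemma \ref{Lem denominator estimate}, each coordinate of that difference has variance $O(\Theta)$, so choosing $\Theta = R^{\alpha}$ for some $\alpha \in (\frac{1}{2\land\eta},1)$ and using a union bound gives $P\big(|\mathbb{F}_{R+\Theta}^{R}(\mathbb{T}_k) - \mathbb{F}_{2R+\Theta}^{2R}(\mathbb{T}_k)| > \frac{\delta R}{2}\big) \to 0$; on the complement of this event, if $\frac{1}{R}\mathbb{F}_R(\mathbb{T}_k)\in\mathbb{B}_k(x_1,\frac{\delta}{4})$ and $\frac{1}{R}\mathbb{F}_{2R+\Theta}^{R+\Theta}(\mathbb{T}_k)\in\mathbb{B}_k(x_2,\frac{\delta}{4})$, then $\frac{1}{2R}\mathbb{F}_{2R}(\mathbb{T}_k)\in\mathbb{B}_k(\frac{x_1+x_2}{2},\delta)$ by the triangle inequality.

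Next I would pass to the decoupling step. Write $A_R^1 \coloneqq \{\frac{1}{R}\mathbb{F}_R(\mathbb{T}_k)\in\mathbb{B}_k(x_1,\frac{\delta}{4})\}$ and $A_R^2 \coloneqq \{\frac{1}{R}\mathbb{F}_{2R+\Theta}^{R+\Theta}(\mathbb{T}_k)\in\mathbb{B}_k(x_2,\frac{\delta}{4})\}$. Then
\begin{equation*}
    \mu_{2R}^{\mathbb{T}_k}\Big(\mathbb{B}_k\big(\tfrac{x_1+x_2}{2},\delta\big)\Big) \geq P(A_R^1 \cap A_R^2) - P\Big(\big|\mathbb{F}_{R+\Theta}^{R}(\mathbb{T}_k) - \mathbb{F}_{2R+\Theta}^{2R}(\mathbb{T}_k)\big| > \tfrac{\delta R}{2}\Big).
\end{equation*}
To estimate $P(A_R^1\cap A_R^2)$ from below by (essentially) the product $P(A_R^1)P(A_R^2)$, I would approximate the indicators $\mathbf{1}_{A_R^1}$ and $\mathbf{1}_{A_R^2}$ by Lipschitz functions $\varphi,\psi$ of $\mathbb{F}_R(\mathbb{T}_k)$ and $\mathbb{F}_{2R+\Theta}^{R+\Theta}(\mathbb{T}_k)$ respectively (bounded by $1$, with Lipschitz constant $O(1/(\delta R))$, squeezed between indicators of the balls of radii $\frac{\delta}{8}$ and $\frac{\delta}{4}$), and apply Proposition \ref{Prp Cov estimate} with $L=R$: the covariance error is $\le C_T\,\mathrm{Lip}(\varphi)\mathrm{Lip}(\psi)\,k^2 R^2 \exp\{-C_{T,\eta,\rho}\Theta^{2\land\eta}\} = C\,k^2\delta^{-2}\exp\{-C_{T,\eta,\rho}R^{(2\land\eta)\alpha}\}$, which decays superpolynomially since $(2\land\eta)\alpha>1$. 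Using Lemma \ref{Lem shift invariance}, $P(A_R^2) = \mu_R^{\mathbb{T}_k}(\mathbb{B}_k(x_2,\frac{\delta}{4}))$ (up to the inner/outer ball slack), and the hypothesis that $\liminf R^{-1}\log\mu_R^{\mathbb{T}_k}(\mathbb{B}_k(x_i,\frac{\delta}{4}))>-\infty$ ensures $\mu_R^{\mathbb{T}_k}(\mathbb{B}_k(x_i,\frac{\delta}{4}))$ decays at most exponentially, so the covariance error is eventually negligible compared to the product $\mu_R^{\mathbb{T}_k}(\mathbb{B}_k(x_1,\frac{\delta}{4}))\mu_R^{\mathbb{T}_k}(\mathbb{B}_k(x_2,\frac{\delta}{4}))$; likewise the martingale tail term, having only polynomial pre-factor times $\exp\{-cR^{2}/R^\alpha\}$, is negligible. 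Collecting these estimates and absorbing all errors into the factor $\frac12$ for $R$ large (depending on $x_1,x_2,\delta$), we obtain $\mu_{2R}^{\mathbb{T}_k}(\mathbb{B}_k(\frac{x_1+x_2}{2},\delta)) \ge \frac12\,\mu_R^{\mathbb{T}_k}(\mathbb{B}_k(x_1,\frac{\delta}{4}))\,\mu_R^{\mathbb{T}_k}(\mathbb{B}_k(x_2,\frac{\delta}{4}))$.

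The main obstacle I anticipate is bookkeeping the mismatch between the balls of radius $\frac{\delta}{4}$ in the hypothesis and the balls of radius $\frac{\delta}{8}$ (or similar) forced by the Lipschitz approximation of indicators, and checking that the Lipschitz constants one is forced to take (scaling like $1/(\delta R)$) keep the covariance error in Proposition \ref{Prp Cov estimate} — which carries a factor $LR = R^2$ — genuinely small; this is where the superpolynomial decay $\exp\{-C_{T,\eta,\rho}R^{(2\land\eta)\alpha}\}$ with $(2\land\eta)\alpha>1$ is essential and must be invoked carefully, exactly as in the proof of Lemma \ref{Lem numerator estimate}. Once the choice $\Theta=R^\alpha$ is fixed and one tracks the constants through the union bounds, everything else is routine.
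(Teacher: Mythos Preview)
Your approach is essentially the same as the paper's: introduce a gap $\Theta=R^{\gamma}$ with $\gamma\in(\tfrac{1}{2\wedge\eta},1)$, control the residual $\mathbb{F}_{R+\Theta}^{R}(\mathbb{T}_k)-\mathbb{F}_{2R+\Theta}^{2R}(\mathbb{T}_k)$ by the exponential martingale inequality, decouple the two blocks via Lipschitz approximations and Proposition~\ref{Prp Cov estimate}, use Lemma~\ref{Lem shift invariance}, and absorb both error terms into the factor $\tfrac12$ using the hypothesis that the product decays at most like $e^{-MR}$.

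The one concrete issue is exactly the radius bookkeeping you flagged, and as written it does not close. With your choice $A_R^i=\{\tfrac{1}{R}\mathbb{F}_{\cdots}(\mathbb{T}_k)\in\mathbb{B}_k(x_i,\tfrac{\delta}{4})\}$ and Lipschitz functions squeezed between $\ind_{\mathbb{B}_k(x_i,\delta/8)}$ and $\ind_{\mathbb{B}_k(x_i,\delta/4)}$, the lower bound you obtain on $P(A_R^1\cap A_R^2)$ after decoupling is in terms of $\mu_R^{\mathbb{T}_k}(\mathbb{B}_k(x_i,\tfrac{\delta}{8}))$, not $\mu_R^{\mathbb{T}_k}(\mathbb{B}_k(x_i,\tfrac{\delta}{4}))$ as the lemma asserts. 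The fix is free: your triangle-inequality step actually lands you inside $\mathbb{B}_k(\tfrac{x_1+x_2}{2},\tfrac{\delta}{2})$, so there is slack. Enlarge the intermediate events to radius $\tfrac{\delta}{2}$, \textit{i.e.}\ take $A_R^i=\{\tfrac{1}{R}\mathbb{F}_{\cdots}(\mathbb{T}_k)\in\mathbb{B}_k(x_i,\tfrac{\delta}{2})\}$, and choose Lipschitz $f_i^\delta$ with $\ind_{\mathbb{B}_k(x_i,\delta/4)}\le f_i^\delta\le\ind_{\mathbb{B}_k(x_i,\delta/2)}$. Then $\E[f_i^\delta]\ge\mu_R^{\mathbb{T}_k}(\mathbb{B}_k(x_i,\tfrac{\delta}{4}))$, and the conclusion follows with the stated radii. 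This is precisely how the paper arranges the radii; everything else in your outline matches.
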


\begin{proof}
Let $\Theta > 0$.
By a simple estimate, 
\begin{align*}
    &\mu_{2R}^{\mathbb{T}_k}\left(\mathbb{B}_k\left(\frac{x_1 + x_2}{2}, \delta \right)\right) 
    =
    P\left(\left| \frac{1}{R}\mathbb{F}_R(\mathbb{T}_k) - x_1 + \frac{1}{R}\mathbb{F}_{2R}^{R}(\mathbb{T}_k) - x_2 \right| <2\delta \right)\\
    &\geq P\left( \left| \frac{1}{R}\mathbb{F}_R(\mathbb{T}_k) - x_1 \right| < \frac{\delta}{2},  \left| \frac{1}{R}\mathbb{F}_{2R + \Theta}^{R+ \Theta}(\mathbb{T}_k) - x_2 \right| < \frac{\delta}{2}, \left|\frac{1}{R}\mathbb{F}_{2R}^{R}(\mathbb{T}_k) -  \frac{1}{R}\mathbb{F}_{2R + \Theta}^{R + \Theta}(\mathbb{T}_k) \right| < \delta \right)\\
    &=
    \E\left[ \ind_{\mathbb{B}_k\left(x_1,\frac{\delta}{2}\right)}\left(\frac{1}{R}\mathbb{F}_R(\mathbb{T}_k)\right) \ind_{\mathbb{B}_k\left(x_2,\frac{\delta}{2}\right)}\left(\frac{1}{R}\mathbb{F}_{2R+\Theta}^{R+\Theta}(\mathbb{T}_k)\right) \ind_{\mathbb{B}_k\left(\delta \right)}\left(\frac{1}{R}\mathbb{F}_{R + \Theta}^{R}(\mathbb{T}_k) - \frac{1}{R}\mathbb{F}_{2R + \Theta}^{2R}(\mathbb{T}_k)\right)\right].
\end{align*}
Since 
\begin{align*}
    &\left|\E\left[ \ind_{\mathbb{B}_k\left(x_1,\frac{\delta}{2}\right)}\left(\frac{1}{R}\mathbb{F}_R(\mathbb{T}_k)\right) \ind_{\mathbb{B}_k\left(x_2,\frac{\delta}{2}\right)}\left(\frac{1}{R}\mathbb{F}_{2R+\Theta}^{R+\Theta}(\mathbb{T}_k)\right) \ind_{\mathbb{B}_k\left(\delta \right)}\left(\frac{1}{R}\mathbb{F}_{R + \Theta}^{R}(\mathbb{T}_k) - \frac{1}{R}\mathbb{F}_{2R + \Theta}^{2R}(\mathbb{T}_k)\right)\right] \right.\\[1mm]
    &\quad \left.
    - \E\left[ \ind_{\mathbb{B}_k\left(x_1,\frac{\delta}{2}\right)}\left(\frac{1}{R}\mathbb{F}_R(\mathbb{T}_k)\right) \ind_{\mathbb{B}_k\left(x_2,\frac{\delta}{2}\right)}\left(\frac{1}{R}\mathbb{F}_{2R+\Theta}^{R+\Theta}(\mathbb{T}_k)\right) \right]\right|\\[1mm]
    &\leq 
    P\left(\left|\mathbb{F}_{R + \Theta}^{R}(\mathbb{T}_k) - \mathbb{F}_{2R+\Theta}^{2R}(\mathbb{T}_k)\right| \geq R\delta \right)\\
    &\leq 2ke^{-\frac{\delta^2 R^2}{2 k \Theta C_{T, \Gamma, \sigma} }}
\end{align*}
by a similar argument as in the proof of Lemma \ref{Lem denominator estimate} (see \eqref{tail estimate 1}), we have
\begin{equation}
\label{le1}
    \mu_{2R}^{\mathbb{T}_k}\left(\mathbb{B}_k\left(\frac{x_1 + x_2}{2}, \delta \right)\right)
    \geq \E\left[ \ind_{\mathbb{B}_k\left(x_1,\frac{\delta}{2}\right)}\left(\frac{1}{R}\mathbb{F}_R(\mathbb{T}_k)\right) \ind_{\mathbb{B}_k\left(x_2,\frac{\delta}{2}\right)}\left(\frac{1}{R}\mathbb{F}_{2R+\Theta}^{R+\Theta}(\mathbb{T}_k)\right) \right] - 2ke^{-\frac{\delta^2 R^2}{2 k \Theta C_{T, \Gamma, \sigma}}}.
\end{equation}
Let $f_i^{\delta} \colon \R^k \to [0,1], \ (i=1,2)$ be Lipschitz functions such that $\ind_{\mathbb{B}_k\left(x_i,\frac{\delta}{4}\right)} \leq f_i^{\delta} \leq \ind_{\mathbb{B}_k\left(x_i,\frac{\delta}{2}\right)}$.
As before, we deduce from Proposition \ref{Prp Cov estimate} that
\begin{equation*}
    \left| \mathrm{Cov}\left( f_1^{\delta}\left(\frac{1}{R}\mathbb{F}_{R}(\mathbb{T}_k)\right), f_2^{\delta}\left(\frac{1}{R}\mathbb{F}_{2R+\Theta}^{R+\Theta}(\mathbb{T}_k)\right) \right) \right| \leq C_T \mathrm{Lip}(f_1^{\delta}) \mathrm{Lip}(f_2^{\delta})k^2 \exp\{-C_{T, \eta, \rho}\Theta^{2 \land \eta}\}.
\end{equation*}
This together with \eqref{le1} and Lemma \ref{Lem shift invariance} yields 
\begin{equation}
    \mu_{2R}^{\mathbb{T}_k}\left(\mathbb{B}_k\left(\frac{x_1 + x_2}{2}, \delta \right)\right) 
    \geq 
    \mu_{R}^{\mathbb{T}_k}\left(\mathbb{B}_k\left(x_1, \frac{\delta}{4} \right)\right)\mu_{R}^{\mathbb{T}_k}\left(\mathbb{B}_k\left(x_2, \frac{\delta}{4} \right)\right) - \mathbf{A}(\Theta, R), \label{c1}
\end{equation}
where $\dis \mathbf{A}(\Theta, R) \coloneqq C_T \mathrm{Lip}(f_1^{\delta}) \mathrm{Lip}(f_2^{\delta}) k^2\exp\{-C_{T, \eta, \rho}\Theta^{2 \land \eta}\} + 2ke^{-\frac{\delta^2 R^2}{2 k \Theta C_{T, \Gamma, \sigma}}}$. 
Now observe that by the assumption of the lemma, there exists $M >0$ and $R^{(1)}_{x_1,x_2,\delta} > 0$ such that for any $R \geq R^{(1)}_{x_1,x_2,\delta}$, 
\begin{equation}
\label{c2}
    \mu_{R}^{\mathbb{T}_k}\left(\mathbb{B}_k\left(x_1, \frac{\delta}{4} \right)\right)\mu_{R}^{\mathbb{T}_k}\left(\mathbb{B}_k\left(x_2, \frac{\delta}{4} \right)\right) \geq e^{-MR}.
\end{equation}
Hence, combining \eqref{c1} and \eqref{c2}, we obtain for any $R \geq R^{(1)}_{x_1,x_2,\delta}$,
\begin{equation}
    \mu_{2R}^{\mathbb{T}_k}\left(\mathbb{B}_k\left(\frac{x_1 + x_2}{2}, \delta \right)\right) 
    \geq 
    \mu_{R}^{\mathbb{T}_k}\left(\mathbb{B}_k\left(x_1, \frac{\delta}{4} \right)\right)\mu_{R}^{\mathbb{T}_k}\left(\mathbb{B}_k\left(x_2, \frac{\delta}{4} \right)\right)
    \left\{ 1- \mathbf{A}(\Theta, R)e^{MR} \right\}.
\end{equation}
Set $\Theta = R^{\gamma}$ for some $\gamma \in \left(\frac{1}{2 \land \eta}, 1\right)$. Then we can choose $R^{(2)}_{x_1,x_2,\delta}>0$ such that for any $R \geq R^{(2)}_{x_1,x_2,\delta}$, 
\begin{equation}
    1- \mathbf{A}(\Theta, R)e^{MR} \geq \frac{1}{2}.
\end{equation}
Therefore, letting $R_{x_1,x_2,\delta} \coloneqq R^{(1)}_{x_1,x_2,\delta} \lor R^{(2)}_{x_1,x_2,\delta}$ completes the proof.
\end{proof}

\begin{Prp}
\label{Prp convexity rate function}
For each $\lambda \in \R^k$, the limit $\Lambda^{\mathbb{T}_k}(\lambda)$ in \eqref{lambda t limit} exists and is finite.
Moreover, the rate function $I^{\mathbb{T}_k}$ in \eqref{rate function of mu_R^(t)} is convex. 
Consequently, $I^{\mathbb{T}_k}$ is the Fenchel-Legendre transform of $\Lambda^{\mathbb{T}_k}$, \textit{i.e.} 
\begin{equation*}
    I^{\mathbb{T}_k}(x) = \sup_{\lambda \in \R^k} \{ \lambda \cdot x - \Lambda^{\mathbb{T}_k}(\lambda) \}.
\end{equation*}
\end{Prp}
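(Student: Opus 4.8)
The plan is to establish the three assertions in turn, using as inputs the LDP for $\{\mu_R^{\mathbb{T}_k}\}_{R>0}$ with the good rate function $I^{\mathbb{T}_k}$ already proved in Sections \ref{subsection Exponential tightness}--\ref{subsection Existence of the limit}, together with Lemma \ref{Lem convexity inequality} and Proposition \ref{Prp convexity and duality}. First I would check the integrability hypothesis of Proposition \ref{Prp convexity and duality}: since $\mu_R^{\mathbb{T}_k}$ is the law of $\frac{1}{R}\mathbb{F}_R(\mathbb{T}_k)$, one has $\int_{\R^k}e^{R\lambda\cdot x}\mu_R^{\mathbb{T}_k}(dx) = \E[e^{\lambda\cdot\mathbb{F}_R(\mathbb{T}_k)}]$, and I must show $\limsup_{R\to\infty}R^{-1}\log\E[e^{\lambda\cdot\mathbb{F}_R(\mathbb{T}_k)}] < \infty$ for every $\lambda\in\R^k$. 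Bounding $e^{\lambda\cdot\mathbb{F}_R(\mathbb{T}_k)}\leq\sum_{i=1}^k e^{\sqrt{k}|\lambda|\,|F_R(t_i)|}$ and invoking the sub-Gaussian tail bound $P(|F_R(t_i)|\geq r)\leq 2e^{-r^2/(2C_{T,\Gamma,\sigma}R)}$, obtained from the exponential martingale inequality exactly as in the proof of Proposition \ref{Prp exponential tightness}, a routine Gaussian integration gives $\E[e^{\lambda\cdot\mathbb{F}_R(\mathbb{T}_k)}]\leq C_{k,\lambda}(1+\sqrt{R})\exp\{\tfrac{k}{2}|\lambda|^2 C_{T,\Gamma,\sigma}R\}$, so the $\limsup$ is at most $\tfrac{k}{2}|\lambda|^2 C_{T,\Gamma,\sigma}<\infty$. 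Proposition \ref{Prp convexity and duality} then yields that $\Lambda^{\mathbb{T}_k}(\lambda)$ exists and is finite for each $\lambda$, with $\Lambda^{\mathbb{T}_k}(\lambda)=\sup_{x\in\R^k}\{\lambda\cdot x-I^{\mathbb{T}_k}(x)\}$.

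The substance of the proof is the convexity of $I^{\mathbb{T}_k}$. Since $I^{\mathbb{T}_k}$ is lower semicontinuous, it suffices to prove the midpoint inequality $I^{\mathbb{T}_k}\big(\tfrac{x_1+x_2}{2}\big)\leq\tfrac12 I^{\mathbb{T}_k}(x_1)+\tfrac12 I^{\mathbb{T}_k}(x_2)$ for all $x_1,x_2\in\R^k$; full convexity then follows by the standard dyadic-approximation argument, using lower semicontinuity once more. If $I^{\mathbb{T}_k}(x_1)=\infty$ or $I^{\mathbb{T}_k}(x_2)=\infty$ this is trivial, so assume both are finite. Fix $\delta>0$. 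By the LDP lower bound, $\liminf_{R\to\infty}R^{-1}\log\mu_R^{\mathbb{T}_k}(\mathbb{B}_k(x_i,\tfrac{\delta}{4}))\geq -I^{\mathbb{T}_k}(x_i)>-\infty$, so the hypotheses of Lemma \ref{Lem convexity inequality} hold. Taking $(2R)^{-1}\log$ in the conclusion of that lemma, letting $R\to\infty$ (and noting that $2R$ exhausts $(0,\infty)$), and using $\liminf(a_R+b_R)\geq\liminf a_R+\liminf b_R$ on the right-hand side, I arrive at
\begin{equation*}
    \liminf_{S\to\infty}\frac{1}{S}\log\mu_S^{\mathbb{T}_k}\left(\mathbb{B}_k\left(\frac{x_1+x_2}{2},\delta\right)\right)\ \geq\ \frac{1}{2}\sum_{i=1}^2\liminf_{R\to\infty}\frac{1}{R}\log\mu_R^{\mathbb{T}_k}\left(\mathbb{B}_k\left(x_i,\frac{\delta}{4}\right)\right).
\end{equation*}
Applying the LDP upper bound to the open ball on the left (bounded by its closure) gives a left-hand side $\leq -\inf_{y\in\overline{\mathbb{B}_k((x_1+x_2)/2,\delta)}}I^{\mathbb{T}_k}(y)$, while the LDP lower bound gives a right-hand side $\geq -\tfrac12(I^{\mathbb{T}_k}(x_1)+I^{\mathbb{T}_k}(x_2))$; hence $\inf_{y\in\overline{\mathbb{B}_k((x_1+x_2)/2,\delta)}}I^{\mathbb{T}_k}(y)\leq\tfrac12 I^{\mathbb{T}_k}(x_1)+\tfrac12 I^{\mathbb{T}_k}(x_2)$. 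Letting $\delta\downarrow 0$ and using that lower semicontinuity forces $\inf_{\overline{\mathbb{B}_k(z,\delta)}}I^{\mathbb{T}_k}\uparrow I^{\mathbb{T}_k}(z)$, I obtain the midpoint inequality, and thus convexity.

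Finally, with the $\limsup$ bound from the first step, the already-established LDP with good rate function $I^{\mathbb{T}_k}$, and the convexity just proved, the ``moreover'' part of Proposition \ref{Prp convexity and duality} gives $I^{\mathbb{T}_k}(x)=\sup_{\lambda\in\R^k}\{\lambda\cdot x-\Lambda^{\mathbb{T}_k}(\lambda)\}$, which completes the proof. The main obstacle is the bookkeeping in the convexity step: keeping track of the $2R$ versus $R$ scalings and handling the mismatch between the open-ball ($\liminf$) lower bound and the closed-ball ($\limsup$) upper bound in the definition of the LDP. By contrast, the genuinely probabilistic content — the covariance/mixing estimate of Proposition \ref{Prp Cov estimate} that lets two far-apart spatial blocks be treated as essentially independent — has already been packaged into Lemma \ref{Lem convexity inequality}, so once its hypotheses are verified the remainder is soft large-deviations manipulation.
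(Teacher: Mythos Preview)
Your proof is correct and follows essentially the same route as the paper's: verify the moment hypothesis of Proposition \ref{Prp convexity and duality}, use Lemma \ref{Lem convexity inequality} to derive the midpoint inequality for $I^{\mathbb{T}_k}$, and conclude via convexity and Proposition \ref{Prp convexity and duality}. The only differences are cosmetic: the paper bounds $\E[e^{\lambda\cdot\mathbb{F}_R(\mathbb{T}_k)}]$ by H\"{o}lder's inequality plus the supermartingale property of $\exp\{k\lambda_i F_{R,t_i}(r) - \tfrac{k^2\lambda_i^2}{2}\langle F_{R,t_i}\rangle_r\}$ rather than by integrating the sub-Gaussian tail, and it extracts $I^{\mathbb{T}_k}(z)$ from local ball probabilities by quoting \cite[Theorem 4.1.18]{Dembo-Zeitouni} rather than by directly combining the LDP upper and lower bounds and letting $\delta\downarrow 0$ through lower semicontinuity; both pairs of arguments are equivalent in content.
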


\begin{proof}
Recall that $\{F_{R,t}(r)\}_{r \in [0,T]}$ defined in the proof of Proposition \ref{Prp exponential tightness} is a square-integrable continuous martingale such that $F_{R,t}(t) = F_R(t)$ and $\langle F_{R,t} \rangle_t  \leq C_{T, \Gamma, \sigma} R$.
A simple computation shows that for any $\lambda \in \R^k$, 
\begin{align}
    \limsup_{R \to \infty} \frac{1}{R}\log \E[e^{\lambda \cdot \bF_R(\bT_k)}] 
    &\leq \limsup_{R \to \infty}  \frac{1}{R}\log \left( \prod_{i=1}^{k} \E\left[e^{k \lambda_i F_{R, t_i}(t_i)}\right] \right)\\
    &\leq \sum_{i=1}^{k} \limsup_{R \to \infty} \frac{1}{R}\log \left( \E\left[e^{k \lambda_i F_{R, t_i}(t_i) - \frac{k^2 \lambda_i^2}{2}\langle F_{R,t_i} \rangle_{t_i}}\right] e^{\frac{k^2 \lambda_i^2 C_{T, \Gamma, \sigma}}{2} R} \right)\\
    &\leq \frac{k^2 |\lambda|^2 C_{T, \Gamma, \sigma}}{2} < \infty, \label{li1}
\end{align}
where the first inequality follows from H\"{o}lder's inequality, and the third inequality holds because the process $\{e^{k \lambda_i F_{R, t_i}(r) - \frac{k^2 \lambda_i^2}{2}\langle F_{R,t_i} \rangle_{r}}\}_{r \in [0,T]}$ is a supermartingale and we have 
\begin{equation}
    \E\left[e^{k \lambda_i F_{R, t_i}(t_i) - \frac{k^2 \lambda_i^2}{2}\langle F_{R,t_i} \rangle_{t_i}}\right] \leq \E\left[e^{k \lambda_i F_{R, t_i}(0) - \frac{k^2 \lambda_i^2}{2}\langle F_{R,t_i} \rangle_{0}}\right] = 1.
\end{equation}
Now we show the convexity of $I^{\mathbb{T}_k}$ in \eqref{rate function of mu_R^(t)}. By \cite[Theorem 4.1.18]{Dembo-Zeitouni}, we have
\begin{equation}
    -I^{\mathbb{T}_k}(x) 
    = \inf_{\delta > 0,\  y \in \mathbb{B}_k(x, \delta)} \liminf_{R \to \infty}\frac{1}{R}\log \mu_R^{\mathbb{T}_k}(\mathbb{B}_k(y, \delta)) 
    = \inf_{\delta > 0,\  y \in \mathbb{B}_k(x, \delta)} \limsup_{R \to \infty}\frac{1}{R}\log \mu_R^{\mathbb{T}_k}(\mathbb{B}_k(y, \delta)).
\end{equation}
Let us take $x_1, x_2 \in \R$ such that $I^{\mathbb{T}_k}(x_i) < \infty, \ (i=1,2)$. 
Then, for this $x_1, x_2$, it holds 
\begin{equation}
\label{lc1}
    \liminf_{R\to \infty}\frac{1}{R}\log \mu_R^{\mathbb{T}_k}(\mathbb{B}_k(x_i, \delta)) > - \infty \quad \text{for any $\delta > 0$} \quad (i=1,2).
\end{equation}
Because $y \in \mathbb{B}_k(\frac{x_1 + x_2}{2}, \delta)$ implies $\mathbb{B}_k(\frac{x_1 + x_2}{2}, \delta') \subset \mathbb{B}_k(y,\delta)$ for all $\delta' > 0$ small enough, we see that
\begin{align}
    -I^{\mathbb{T}_k}\left(\frac{x_1 + x_2}{2}\right) 
    &= 
    \inf_{\delta > 0, y \in \bB_k\left(\frac{x_1 + x_2}{2}, \delta \right)}\limsup_{R \to \infty}\frac{1}{R}\log \mu_R^{\mathbb{T}_k}\left(\mathbb{B}_k\left(y, \delta \right)\right)\\
    &=
    \inf_{\delta' > 0}\limsup_{R \to \infty}\frac{1}{R}\log \mu_R^{\mathbb{T}_k}\left(\mathbb{B}_k\left(\frac{x_1 + x_2}{2}, \delta'\right)\right)\\
    &\geq 
    \inf_{\delta > 0}\liminf_{R \to \infty}\frac{1}{2R}\log \mu_{2R}^{\mathbb{T}_k}\left(\mathbb{B}_k\left(\frac{x_1 + x_2}{2}, \delta \right)\right). \label{con1}
\end{align}
By \eqref{lc1}, we can apply Lemma \ref{Lem convexity inequality} to obtain
\begin{align}
    &\liminf_{R \to \infty}\frac{1}{2R}\log \mu_{2R}^{\mathbb{T}_k}\left(\mathbb{B}_k\left(\frac{x_1 + x_2}{2}, \delta \right)\right) \nonumber\\ 
    &\geq 
    \frac{1}{2}\liminf_{R \to \infty}\frac{1}{R}\log \mu_{R}^{\mathbb{T}_k}\left(\mathbb{B}_k\left(x_1, \frac{\delta}{4} \right)\right) + \frac{1}{2}\liminf_{R \to \infty}\frac{1}{R}\log \mu_{R}^{\mathbb{T}_k}\left(\mathbb{B}_k\left(x_2, \frac{\delta}{4} \right)\right).  \label{con2}
\end{align}
Combining \eqref{con1} and \eqref{con2}, we obtain
\begin{equation}
\label{convex ineq}
    -I^{\mathbb{T}_k}\left(\frac{x_1 + x_2}{2}\right)
    \geq -\frac{1}{2}I^{\mathbb{T}_k}(x_1) - \frac{1}{2}I^{\mathbb{T}_k}(x_2).
\end{equation}
Since \eqref{convex ineq} trivially holds for $x_i$ such that $I^{\mathbb{T}_k}(x_i) = \infty$, we have \eqref{convex ineq} for all $x_1, x_2 \in \R^k$. 
This is sufficient to show the convexity of $I^{\mathbb{T}_k}$, thanks to its lower semicontinuity.
Finally, by Proposition \ref{Prp convexity and duality}, \eqref{li1} and the convexity of $I^{\mathbb{T}_k}$ imply that $I^{\mathbb{T}_k}$ is the Fenchel-Legendre transform of $\Lambda^{\mathbb{T}_k}$, and this completes the proof.
\end{proof}

\section{Sample path LDP}\label{section Sample path LDP}
In this section, we prove Theorem \ref{Thm main 2}.
We make use of the following estimate.

\begin{Lem}\label{Lem diff est}
For any $0 \leq s < t \leq T$, $R>0$, and $r\geq 0$,
\begin{equation}
\label{diff tail est}
    P\left(\left| \frac{1}{R}F_R(t) - \frac{1}{R}F_R(s) \right| \geq r \right) \leq 2\exp{\left(-\frac{Rr^2}{2C_{T,\Gamma, \sigma}|t-s|}\right)}. 
\end{equation}
In particular, for any $p \in \N$, we have 
\begin{equation*}
    \E\left[\left| \frac{1}{R}F_R(t) - \frac{1}{R}F_R(s) \right|^{2p}\right] \leq 2^{p+1}(p!)C_{T,\Gamma,\sigma}^p R^{-p} |t-s|^{p}.
\end{equation*}

\end{Lem}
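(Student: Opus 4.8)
The plan is to represent the increment $\tfrac1R F_R(t)-\tfrac1R F_R(s)$ as the terminal value of a continuous square-integrable martingale whose quadratic variation is bounded almost surely by a constant multiple of $|t-s|/R$, apply the exponential inequality for martingales to obtain \eqref{diff tail est}, and then integrate the tail estimate to get the moment bound. First, fix $0\le s<t\le T$; arguing as in the proof of Proposition \ref{Prp exponential tightness} (using \eqref{sol eq} and the stochastic Fubini theorem) one has $F_R(t)-F_R(s)=N_t$, where for $\tau\in[0,T]$
\begin{equation*}
    N_\tau \coloneqq \int_0^\tau \int_\R g_{s,t}(r,y)\,\sigma(u(r,y))\,W(dr,dy), \qquad g_{s,t}(r,\cdot) \coloneqq G(t-r)\ast\ind_{[0,R]} - \ind_{[0,s]}(r)\,G(s-r)\ast\ind_{[0,R]}.
\end{equation*}
Then $\{N_\tau\}_{\tau\in[0,T]}$ is a continuous square-integrable martingale with $N_0=0$, and since $|\sigma|\le\norm{\sigma}_\infty$, Young's convolution inequality (together with the Convention of Section \ref{section Introduction} when $\Gamma=\delta_0$) yields
\begin{equation*}
    \langle N\rangle_t \le \norm{\sigma}_\infty^2\,\norm{\Gamma}_{L^1(\R)}\int_0^t \norm{g_{s,t}(r,\cdot)}_{L^1(\R)}\,\norm{g_{s,t}(r,\cdot)}_{L^\infty(\R)}\,dr \qquad\text{a.s.}
\end{equation*}

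Next, I would bound this integral by splitting at $r=s$. On $(s,t]$ one has $g_{s,t}(r,\cdot)=G(t-r)\ast\ind_{[0,R]}$, and since $\norm{G(\tau)}_{L^1(\R)}$ is bounded by a $T$-dependent constant for $\tau\in(0,T]$ (it equals $1$ for the heat kernel and $\tau$ for the wave kernel), one gets $\norm{g_{s,t}(r,\cdot)}_{L^1(\R)}\le C_T R$ and $\norm{g_{s,t}(r,\cdot)}_{L^\infty(\R)}\le C_T$, so this part contributes at most $C_T R\,|t-s|$. On $[0,s]$ one has $g_{s,t}(r,\cdot)=(G(t-r)-G(s-r))\ast\ind_{[0,R]}$, whence $\norm{g_{s,t}(r,\cdot)}_{L^1(\R)}\le R\,\norm{G(t-r)-G(s-r)}_{L^1(\R)}$ and $\norm{g_{s,t}(r,\cdot)}_{L^\infty(\R)}\le\norm{G(t-r)-G(s-r)}_{L^1(\R)}$, so this part is at most $R\int_0^s\norm{G(t-r)-G(s-r)}_{L^1(\R)}^2\,dr$. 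For the wave kernel, $G(t-r)-G(s-r)=\tfrac12\ind_{\{s-r\le|x|<t-r\}}$ has $L^1(\R)$-norm $t-s$, so this integral is $\le s(t-s)^2\le T^2|t-s|$. For the heat kernel, one uses the elementary estimate $\norm{G(a+h)-G(a)}_{L^1(\R)}\le C\min(h/a,1)$ (which follows, e.g., from $\partial_a G=\tfrac12\partial_x^2 G$ and $\norm{\partial_a G(a,\cdot)}_{L^1(\R)}\lesssim a^{-1}$), and after splitting the $a$-integral at $a=t-s$ this gives $\int_0^s\norm{G(a+(t-s))-G(a)}_{L^1(\R)}^2\,da\le C|t-s|$. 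Combining, $\langle N\rangle_t\le C_{T,\Gamma,\sigma}\,R\,|t-s|$ a.s.

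Finally, since $\tfrac1R(F_R(t)-F_R(s))=\tfrac1R N_t$ and $\langle \tfrac1R N \rangle_t\le C_{T,\Gamma,\sigma}|t-s|/R$ a.s., the exponential inequality for martingales (\cite[Section A.2]{Nualart_book}), applied to $\pm\tfrac1R N$, yields \eqref{diff tail est}. For the moment bound I would set $X\coloneqq\tfrac1R(F_R(t)-F_R(s))$ and use $\E[|X|^{2p}]=2p\int_0^\infty r^{2p-1}P(|X|\ge r)\,dr$; inserting \eqref{diff tail est} and substituting $u=Rr^2/(2C_{T,\Gamma,\sigma}|t-s|)$ reduces the integral to $\Gamma(p)=(p-1)!$, and collecting constants produces $2^{p+1}(p!)C_{T,\Gamma,\sigma}^p R^{-p}|t-s|^p$. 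The main obstacle is the heat-kernel increment estimate in the second step: a direct Young-type bound on $g_{s,t}$ — the kind used elsewhere in the paper — would only yield a factor of order $|t-s|^{1/2}$ here, which is too weak for the $2p$-th moment to decay like $|t-s|^p$, so one genuinely has to exploit the near-cancellation $G(t-r)\approx G(s-r)$ for $r$ far from $s$.
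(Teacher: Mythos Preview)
Your argument is correct. Both you and the paper write the increment as the terminal value of a continuous martingale with a.s.\ bounded quadratic variation and then invoke the exponential inequality; the moment bound follows identically. The genuine difference lies in how the quadratic-variation bound $\langle N\rangle_t\le C_{T,\Gamma,\sigma}R|t-s|$ is obtained. The paper, after bounding $|\sigma|\le\norm{\sigma}_\infty$ and applying Young's inequality, passes to the Fourier side via Plancherel and then uses the pointwise-in-$\xi$ estimate
\[
\int_0^T\bigl|\ind_{[0,t)}(r)\cF G(t-r)(\xi)-\ind_{[0,s)}(r)\cF G(s-r)(\xi)\bigr|^2\,dr\le C_T|t-s|,
\]
which it imports from the CLT papers \cite{1dSHECLT,1dSWECLT}; combining this with $\int_\R|\cF\ind_{[0,R]}(\xi)|^2\,d\xi=R$ gives the result in one line. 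You instead stay on the physical side, bound the bilinear $\Gamma$-form by $\norm{\Gamma}_{L^1}\norm{g_{s,t}(r,\cdot)}_{L^1}\norm{g_{s,t}(r,\cdot)}_{L^\infty}$, split the $r$-integral at $r=s$, and handle the heat case on $[0,s]$ via the elementary real-variable estimate $\norm{G(a+h)-G(a)}_{L^1(\R)}\le C\min(h/a,1)$, whose square integrates over $a\in[0,s]$ to $O(h)$. Your route is more self-contained (no external citation needed) and makes the cancellation mechanism explicit, while the paper's Fourier argument is shorter and unifies the heat and wave cases without a case distinction on the kernel.
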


\begin{proof}
Let 
\begin{equation}
    F_{R,s,t}(q) = \int_0^q\int_{\R}\left\{ \ind_{[0,t)}(r)(G(t-r)\ast \ind_{[0,R]})(z) - \ind_{[0,s)}(r)(G(s-r)\ast \ind_{[0,R]})(z)\right\}\sigma(u(r,z))W(dr,dz).
\end{equation}
We have $F_{R,s,t}(T) = F_R(t)-F_R(s)$ and 
\begin{align}
    \abra{F_{R,s,t}}_T 
    &= \int_0^T\int_{\R^2}\left\{ \ind_{[0,t)}(r)(G(t-r)\ast \ind_{[0,R]})(z) - \ind_{[0,s)}(r)(G(s-r)\ast \ind_{[0,R]})(z)\right\}\sigma(u(r,z))\Gamma(z-\widetilde{z}) \\
    &\qquad \quad \times\left\{ \ind_{[0,t)}(r)(G(t-r)\ast \ind_{[0,R]})(\widetilde{z}) - \ind_{[0,s)}(r)(G(s-r)\ast \ind_{[0,R]})(\widetilde{z})\right\}\sigma(u(r,\widetilde{z}))dzd\widetilde{z}dr\\
    &\leq \norm{\sigma}_{\infty}^2\norm{\Gamma}_{L^1(\R)}\int_0^T\int_{\R}\left| \ind_{[0,t)}(r)\cF G(t-r)(\xi) - \ind_{[0,s)}(r)\cF G(s-r)(\xi) \right|^2|\cF\ind_{[0,R]}(\xi)|^2d\xi dr,
\end{align}
where $\cF$ denotes the Fourier transform. 
Here, in the last inequality, we used the boundedness of $\sigma$, Young’s convolution inequality, and Plancherel’s theorem, in that order.
It is easy to check that 
\begin{equation}
    \int_0^T \left| \ind_{[0,t)}(r)\cF G(t-r)(\xi) - \ind_{[0,s)}(r)\cF G(s-r)(\xi) \right|^2 dr \leq C_T |t-s|
\end{equation}
holds in both the heat and wave cases. 
See, \textit{e.g.}, the proofs of Proposition 4.1 in \cite{1dSHECLT} and \cite{1dSWECLT}. 
Therefore, we obtain $\abra{F_{R,s,t}}_T \leq C_{T,\Gamma, \sigma}|t-s|R$, and the exponential inequality for martingales now gives \eqref{diff tail est}.
\end{proof}

Lemma \ref{Lem diff est} implies that the process $\{R^{-1}F_R(t)\}_{t\in[0,T]}$ has a continuous modification. 
We write $\mu_R$ for the law on $(C([0,T]), \cB(C([0,T])))$ of a continuous modification of this process.

Let $\cO_{\mathrm{p}}$ (resp. $\cO_{\mathrm{u}}$) denote the topology of pointwise (resp. uniform) convergence on $\R^{[0,T]}$. 
Let $\widetilde{\cO_{\mathrm{p}}}$ (resp. $\widetilde{\cO_{\mathrm{u}}}$) be the subspace topology on $C([0,T])$ induced by $\cO_{\mathrm{p}}$ (resp. $\cO_{\mathrm{u}}$).
We write $\cB(\cO)$ for the Borel $\sigma$-algebra of a topology $\cO$. 
Note that $\mu_R$ on $(C([0,T]), \cB(\widetilde{\cO_{\mathrm{u}}}))$ can naturally be regarded as a measure on $(\R^{[0,T]}, \cB(\cO_{\mathrm{p}}))$ since $\cO_{\mathrm{p}} \subset \cO_{\mathrm{u}}$ and $\cB(\widetilde{\cO_{\mathrm{u}}}) = \cB(\cO_{\mathrm{u}}) \cap C([0,T])$.

The following proposition is a direct consequence of Theorem \ref{Thm main 1} and Proposition \ref{Prp proj gene}. 

\begin{Prp}
\label{Prp projective LDP}
The family of probability measures $\{\mu_R\}_{R>0}$ on $(\R^{[0,T]}, \cB(\cO_{p}))$ satisfies the LDP with speed $R$ and the convex good rate function 
\begin{equation}
    I(f) = \sup_{k \in \N} \sup_{0 \leq t_1 < \cdots < t_k \leq T}\bigg\{ I^{\mathbb{T}_k} \Big( (f(t_1), \ldots, f(t_k)) \Big) \bigg\}, \qquad f \in \R^{[0,T]},
\end{equation}
where $I^{\bT_k}$ is the rate function specified in Theorem \ref{Thm main 1}.
\end{Prp}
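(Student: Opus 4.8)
The plan is to deduce this directly from Theorem \ref{Thm main 1} and Proposition \ref{Prp proj gene}, with the only real work being to match up the objects and to verify convexity of $I$. First I would recall the identification already set up above: $\mu_R$, a priori a Borel probability measure on $(C([0,T]), \cB(\widetilde{\cO_{\mathrm u}}))$, is regarded as a Borel probability measure on $(\R^{[0,T]}, \cB(\cO_{\mathrm p}))$, which is legitimate since $\cO_{\mathrm p} \subset \cO_{\mathrm u}$ and $\cB(\widetilde{\cO_{\mathrm u}}) = \cB(\cO_{\mathrm u}) \cap C([0,T])$. The evaluation maps $\widetilde{p}_j \colon \R^{[0,T]} \to \R^{|j|}$ are $\cO_{\mathrm p}$-continuous, hence $\cB(\cO_{\mathrm p})$-measurable, so each pushforward $\mu_R \circ \widetilde{p}_j^{\,-1}$ is a well-defined Borel probability measure on $\R^{|j|}$.

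Next I would show that for $j = (t_1, \ldots, t_k) \in \cJ$ one has $\mu_R \circ \widetilde{p}_j^{\,-1} = \mu_R^{\bT_k}$ with $\bT_k = (t_1, \ldots, t_k)$. Indeed, $\mu_R$ is the law of a continuous modification of $\{R^{-1}F_R(t)\}_{t \in [0,T]}$, so its image under $\widetilde{p}_j$ is the law of $(R^{-1}F_R(t_1), \ldots, R^{-1}F_R(t_k)) = R^{-1}\bF_R(\bT_k)$, which is exactly $\mu_R^{\bT_k}$ — the finite-dimensional distributions of a process and of any of its modifications coincide. By Theorem \ref{Thm main 1}, $\{\mu_R^{\bT_k}\}_{R>0}$ satisfies the LDP with speed $R$ and the convex good rate function $I^{\bT_k}$. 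Feeding this into Proposition \ref{Prp proj gene} yields the LDP for $\{\mu_R\}_{R>0}$ on $(\R^{[0,T]}, \cB(\cO_{\mathrm p}))$ with good rate function $I(f) = \sup_{j \in \cJ}\{I^{\bT_j}(\widetilde{p}_j(f))\}$; writing $j = (t_1, \ldots, t_k)$ with $0 \le t_1 < \cdots < t_k \le T$ and recalling $\widetilde{p}_j(f) = (f(t_1), \ldots, f(t_k))$ gives precisely the stated formula.

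Finally I would check convexity of $I$. For each fixed $j \in \cJ$ the map $f \mapsto \widetilde{p}_j(f)$ is linear on the vector space $\R^{[0,T]}$, and $I^{\bT_j}$ is convex by Theorem \ref{Thm main 1} (equivalently Proposition \ref{Prp convexity rate function}), so $f \mapsto I^{\bT_j}(\widetilde{p}_j(f))$ is convex; a pointwise supremum of convex functions is convex, hence $I$ is convex. I do not anticipate a genuine obstacle here: the entire argument is an application of the two cited results, and the only point requiring care is the measure-theoretic bookkeeping — the consistency between the pointwise and uniform topologies, the measurability of the projections, and the identification $\mu_R \circ \widetilde{p}_j^{\,-1} = \mu_R^{\bT_k}$ — all of which has essentially been prepared in the discussion preceding the statement.
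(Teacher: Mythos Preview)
Your proposal is correct and follows exactly the approach the paper indicates: the paper simply states that the proposition is a direct consequence of Theorem \ref{Thm main 1} and Proposition \ref{Prp proj gene}, and your write-up supplies the straightforward bookkeeping (identifying $\mu_R \circ \widetilde{p}_j^{\,-1}$ with $\mu_R^{\bT_k}$ and checking convexity via linearity of $\widetilde{p}_j$ and convexity of each $I^{\bT_k}$) that makes this explicit.
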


In light of Proposition \ref{Prp projective LDP}, the proof of Theorem \ref{Thm main 2} is completed once we establish the exponential tightness of $\{\mu_R\}_{R>0}$ with respect to the finer topology $\cO_{\mathrm{u}}$. 
To this end, we use the following estimate, which is a consequence of the embedding result between Hölder and Orlicz spaces given in \cite[Section 7]{Schiedpaper}.

\begin{Lem}[{\cite[Corollary 7.1]{Schiedpaper}}]
\label{Lem schied corollary 7.1}
Let $n \in \N$ and $\{\xi(t)\}_{t \in [0,T]^n}$ be a real-valued continuous process. 
Suppose that there exists $p, q, r \in (0, \infty)$ such that 
\begin{equation}
    \sup_{s, t \in [0,T]^n, s \neq t}\E\left[ \exp \left( \frac{p}{|t-s|^{q}} |\xi(t) - \xi(s)| \right) \right] \leq r.
\end{equation}
Then, for any $q' \in (0, q)$ and $M \geq 0$, 
\begin{equation}
    P\left(\sup_{s, t \in [0,T]^n, s \neq t} \frac{|\xi(t) - \xi(s)|}{|t-s|^{q'}} \geq M \right) \leq (1 + r)\exp \left( -\frac{Mp}{c_{n,q,q',T}} \right),
\end{equation}
where the constant $c_{n, q, q', T}$ depends only on $n$, $q$, $q'$, and $T$, and can be taken, for instance, as 
\begin{equation}
    c_{n,q,q',T} = \frac{T^{q-q'}(1+Q!)2^{q' + 1 + \frac{n}{Q}}}{1-2^{-(q-q')+ \frac{n}{Q}}}, \qquad \text{with} \quad Q \coloneqq \left\lfloor\frac{n}{q-q'}\right\rfloor + 1.
\end{equation}
\end{Lem}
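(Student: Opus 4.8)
The plan is to prove this by a dyadic chaining argument — in effect a quantitative Kolmogorov–Chentsov lemma in which the Orlicz function $\Phi(x)=e^{x}-1$ replaces a polynomial moment. Write $X_{s,t}:=|\xi(t)-\xi(s)|/|t-s|^{q}$, so the hypothesis reads $\sup_{s\ne t}\E[e^{pX_{s,t}}]\le r$, and set $Z:=\sup_{s\ne t}|\xi(t)-\xi(s)|/|t-s|^{q'}$. By continuity of $\xi$, $Z$ equals the supremum over the countable dense set $D:=\bigcup_{m\ge 0}\Pi_m$, where $\Pi_m\subset[0,T]^n$ is the grid whose coordinates are integer multiples of $T2^{-m}$, so $Z$ is a well-defined $[0,\infty]$-valued random variable. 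First I would establish the purely deterministic chaining inequality: for every continuous $f$ on $[0,T]^n$,
\[
\sup_{s,t\in D,\ s\ne t}\frac{|f(t)-f(s)|}{|t-s|^{q'}}\ \le\ c(n,q',T)\sum_{m\ge 0}2^{mq'}\mathcal{M}_m(f),\qquad \mathcal{M}_m(f):=\max\bigl\{|f(x)-f(y)|:x,y\in\Pi_m,\ |x-y|=T2^{-m}\bigr\}.
\]
This is standard: given $s\ne t$ in $D$, choose $m_0$ with $|t-s|\asymp T2^{-m_0}$, push $s$ and $t$ down to their nearest level-$m_0$ grid points through a telescoping sequence of refinements (each refinement costs at most $n$ edges of the finer grid, hence $\le n\mathcal{M}_m(f)$), connect the two level-$m_0$ grid points by $O_n(1)$ edges of $\Pi_{m_0}$, and bound $|t-s|^{-q'}\le C_T 2^{m_0q'}$; since $q'>0$ one then absorbs $2^{m_0q'}\mathcal M_m\le 2^{mq'}\mathcal M_m$ for $m\ge m_0$ and extends the sum to all $m\ge 0$.

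The probabilistic step is a union bound. Every edge $(x,y)$ of $\Pi_m$ has $|x-y|=T2^{-m}$, so Chebyshev's inequality applied to the hypothesis gives, for any $\lambda>0$,
\[
P\bigl(|\xi(x)-\xi(y)|\ge\lambda\bigr)\ \le\ r\exp\!\Bigl(-\frac{p\lambda\,2^{mq}}{T^{q}}\Bigr),
\]
and since $\Pi_m$ has at most $C_n 2^{nm}$ edges, $P(\mathcal{M}_m(\xi)\ge\lambda)\le C_n r\,2^{nm}\exp(-p\lambda 2^{mq}/T^{q})$. One then picks thresholds $\theta_m$ so that, on the event $\bigcap_m\{\mathcal{M}_m(\xi)<\theta_m\}$, the chaining inequality forces $Z<M$ — which requires $\sum_m 2^{mq'}\theta_m$ to be comparable to $M$ — while $\sum_m P(\mathcal{M}_m(\xi)\ge\theta_m)$ is bounded by the target $(1+r)e^{-pM/c}$. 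The key is that because $q'<q$, the factor $2^{-mq}$ in the exponent more than compensates the $2^{mq'}$ in the first sum together with the $\log$-in-$m$ entropy contribution of the $2^{nm}$ edges: choosing $\theta_m$ so that the $m$-th tail term is a prescribed geometric fraction of the total forces $\theta_m\asymp \frac{T^{q}}{p\,2^{mq}}(\mathrm{const}\cdot m+\log r+\mathrm{const})$, hence $\sum_m 2^{mq'}\theta_m\asymp\frac{T^{q-q'}}{p}(\mathrm{const}+\mathrm{const}\cdot(\log r+\log(1/\delta)))$ converges since $\sum_m m\,2^{-m(q-q')}<\infty$; inverting the resulting relation between $\delta:=P(Z\ge M)$ and $M$ yields $\delta\le(1+r)e^{-pM/c}$ with $c=c_{n,q,q',T}$ finite.

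The main obstacle — indeed the only nonroutine point — is extracting the explicit constant $c_{n,q,q',T}$ with precisely the stated shape. The appearance of $Q=\lfloor n/(q-q')\rfloor+1$ (the smallest integer with $n/Q<q-q'$), the geometric denominator $1-2^{-(q-q')+n/Q}$, and the factor $1+Q!$ strongly suggest that one should group the dyadic scales into blocks of length $Q$, i.e. work with the meshes $T2^{-kQ}$, $k\ge 0$, chosen exactly so that the per-block entropy $2^{nkQ}$ is outpaced by the Orlicz tail decay; the series that then has to be summed is of the form $\sum_k\binom{k+Q}{Q}2^{-(q-q'-n/Q)kQ}$, which is handled via $\sum_k\binom{k+Q}{Q}x^k=(1-x)^{-Q-1}$ together with $k^{Q}\le Q!\binom{k+Q}{Q}$, producing the $1+Q!$ and the denominator $1-2^{-(q-q')+n/Q}$. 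Carrying this bookkeeping through is the content of \cite[Section 7]{Schiedpaper}; since the statement is quoted verbatim as \cite[Corollary 7.1]{Schiedpaper}, in the paper itself one simply cites it, but the argument above is the one I would reconstruct.
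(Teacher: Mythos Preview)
The paper does not prove this lemma at all: it is quoted verbatim as \cite[Corollary 7.1]{Schiedpaper}, with only a remark that the argument extends from $[0,1]^n$ to $[0,T]^n$ without essential modification. You correctly acknowledge this at the end of your proposal, so on the level of what the paper actually does, you are in agreement.

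Your reconstruction of the underlying argument---dyadic chaining with the Orlicz function $\Phi(x)=e^x-1$, a union bound over grid edges, and threshold choices $\theta_m$ balancing the entropy $2^{nm}$ against the tail decay $e^{-p\lambda 2^{mq}/T^q}$---is a reasonable outline of how such embedding results are proved, and is consistent with the structure of \cite[Section 7]{Schiedpaper}. The heuristic for the explicit constant (grouping scales in blocks of length $Q=\lfloor n/(q-q')\rfloor+1$ so that $n/Q<q-q'$, then summing a series involving $\binom{k+Q}{Q}$) is plausible and correctly identifies why $Q$, $Q!$, and the denominator $1-2^{-(q-q')+n/Q}$ appear, though the precise bookkeeping would need to be checked against the original source if you wanted to claim the exact form of $c_{n,q,q',T}$. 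For the purposes of this paper, none of that is needed: the lemma is used only qualitatively (the constant $C_{T,\delta}$ in \eqref{mu_R exp tight ineq} need only be finite), so a citation suffices.
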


\begin{Rem}
Although \cite[Section 7]{Schiedpaper} considers continuous processes indexed by $[0,1]^n$, we note for completeness that the same argument applies to the case $[0, T]^n$ without any essential modification.
\end{Rem}

\begin{Prp}
The family of probability measures $\{\mu_R\}_{R>0}$ on $(\R^{[0,T]}, \cB(\mathcal{O}_{\mathrm{u}}))$ is exponentially tight. 
Consequently, $\{\mu_R\}_{R>0}$ on $C([0,T], \cB(\widetilde{\cO_{\mathrm{u}}}))$ satisfies the same LDP as in Proposition \ref{Prp projective LDP}.
\end{Prp}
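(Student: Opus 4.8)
The plan is to prove that $\{\mu_R\}_{R>0}$ is exponentially tight in $(\R^{[0,T]},\cO_{\mathrm u})$ by producing, for each $\alpha<\infty$, a compact subset of $C([0,T])$ consisting of a $\tfrac14$-Hölder ball pinned at $t=0$, and then to upgrade the LDP of Proposition \ref{Prp projective LDP} from the topology $\cO_{\mathrm p}$ to the finer topology $\cO_{\mathrm u}$ and restrict it to $C([0,T])$ by standard large-deviations arguments.

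First I would convert the Gaussian tail bound \eqref{diff tail est} into an exponential-moment estimate for the increments of a continuous modification $\widetilde F_R$ of $\{R^{-1}F_R(t)\}_{t\in[0,T]}$. Using the identity $\E[e^{\lambda X}]=\lambda\int_{\R}e^{\lambda r}P(X\ge r)\,dr$ (as in the proof of Lemma \ref{Lem denominator estimate}) with $X=\abs{\widetilde F_R(t)-\widetilde F_R(s)}$, followed by \eqref{diff tail est} and completion of the square, one obtains for all $\lambda>0$ and $0\le s<t\le T$
\begin{equation*}
  \E\!\left[\exp\!\left(\lambda\abs{\widetilde F_R(t)-\widetilde F_R(s)}\right)\right]\le 1+2\lambda\sqrt{\frac{2\pi C_{T,\Gamma,\sigma}\abs{t-s}}{R}}\,\exp\!\left(\frac{\lambda^2 C_{T,\Gamma,\sigma}\abs{t-s}}{2R}\right).
\end{equation*}
Choosing $\lambda=p\abs{t-s}^{-1/2}$ makes the right-hand side a constant $r_R(p):=1+2p\sqrt{2\pi C_{T,\Gamma,\sigma}/R}\,e^{p^2 C_{T,\Gamma,\sigma}/(2R)}$ independent of $s$ and $t$, so Lemma \ref{Lem schied corollary 7.1}, applied for each fixed $R$ with $n=1$, $q=\tfrac12$, $q'=\tfrac14$, and $r=r_R(p)$, gives for every $M\ge0$ and $p>0$
\begin{equation*}
  \mu_R\!\left([f]_{1/4}\ge M\right)\le\bigl(1+r_R(p)\bigr)\exp\!\left(-\frac{Mp}{c_{1,1/2,1/4,T}}\right),\qquad\text{where }\ [f]_{1/4}:=\sup_{0\le s<t\le T}\frac{\abs{f(t)-f(s)}}{\abs{t-s}^{1/4}}.
\end{equation*}
Here $p$ is a free parameter that may depend on $R$ and $M$; taking it proportional to $MR$ so as to minimise $p^2C_{T,\Gamma,\sigma}/(2R)-Mp/c_{1,1/2,1/4,T}$ yields $\mu_R([f]_{1/4}\ge M)\le C_{T,\Gamma,\sigma}(1+M\sqrt R)\exp\bigl(-M^2R/(2(c_{1,1/2,1/4,T})^2 C_{T,\Gamma,\sigma})\bigr)$, and therefore
\begin{equation*}
  \limsup_{R\to\infty}\frac1R\log\mu_R\!\left([f]_{1/4}\ge M\right)\le-\frac{M^2}{2(c_{1,1/2,1/4,T})^2 C_{T,\Gamma,\sigma}}.
\end{equation*}

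Next, since $u(0,\cdot)$ is a deterministic constant, one has $f(0)=R^{-1}F_R(0)=0$ $\mu_R$-almost surely, whence $\norm{f}_\infty\le\abs{f(0)}+T^{1/4}[f]_{1/4}=T^{1/4}[f]_{1/4}$ $\mu_R$-a.s. Given $\alpha<\infty$, set $L_\alpha:=\sqrt{2(\alpha+1)}\,c_{1,1/2,1/4,T}\sqrt{C_{T,\Gamma,\sigma}}$ and
\begin{equation*}
  K_\alpha:=\left\{f\in C([0,T])\ :\ f(0)=0,\ [f]_{1/4}\le L_\alpha\right\}.
\end{equation*}
The constraints $f(0)=0$ and $[f]_{1/4}\le L_\alpha$ make $K_\alpha$ uniformly bounded and equicontinuous, hence relatively compact in $(C([0,T]),\norm{\cdot}_\infty)$ by the Arzelà--Ascoli theorem; moreover $K_\alpha$ is closed there, since $f\mapsto f(0)$ is continuous and $[\,\cdot\,]_{1/4}$ is lower semicontinuous for uniform convergence, so $K_\alpha$ is compact. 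As $\mu_R(K_\alpha^c)=\mu_R([f]_{1/4}>L_\alpha)$, the previous estimate gives $\limsup_{R\to\infty}R^{-1}\log\mu_R(K_\alpha^c)\le-(\alpha+1)<-\alpha$, which is precisely the exponential tightness of $\{\mu_R\}_{R>0}$ in $(\R^{[0,T]},\cO_{\mathrm u})$.

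Finally, since $\cO_{\mathrm p}\subset\cO_{\mathrm u}$ and $\{\mu_R\}_{R>0}$ satisfies the LDP in $(\R^{[0,T]},\cO_{\mathrm p})$ with the good rate function $I$ of Proposition \ref{Prp projective LDP}, the exponential tightness just established upgrades this, via \cite[Corollary 4.2.6]{Dembo-Zeitouni}, to an LDP in $(\R^{[0,T]},\cO_{\mathrm u})$ with the same good rate function $I$. Because $C([0,T])$ is closed in $(\R^{[0,T]},\cO_{\mathrm u})$ (a uniform limit of continuous functions is continuous) and $\mu_R(C([0,T]))=1$ for every $R$, \cite[Lemma 4.1.5]{Dembo-Zeitouni} allows us to restrict the LDP to $C([0,T])$ equipped with $\widetilde{\cO_{\mathrm u}}$; applying the large-deviation lower bound to the open set $C([0,T])^c$ forces $I\equiv+\infty$ off $C([0,T])$, so the restricted rate function coincides with the one in Proposition \ref{Prp projective LDP}, and Theorem \ref{Thm main 2} follows. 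The main obstacle is the middle step: a naive application of Lemma \ref{Lem schied corollary 7.1} with $p$ held fixed yields only speed-$\sqrt R$ decay of $\mu_R(K_\alpha^c)$, which is too weak for exponential tightness at speed $R$; one must let $p$ grow linearly in $R$ and optimise, and then verify that the accompanying factor $r_R(p)$ degrades only polynomially in $R$ after this optimisation.
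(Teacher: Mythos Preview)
Your proof is correct and follows essentially the same route as the paper's: both feed the Gaussian tail of Lemma \ref{Lem diff est} into Schied's H\"older estimate (Lemma \ref{Lem schied corollary 7.1}) to produce compact H\"older balls pinned at $t=0$, and then invoke \cite[Corollary 4.2.6 and Lemma 4.1.5]{Dembo-Zeitouni} to lift the LDP of Proposition \ref{Prp projective LDP} from $\cO_{\mathrm p}$ to $\cO_{\mathrm u}$ and restrict it to $C([0,T])$. The only difference is cosmetic: the paper applies Lemma \ref{Lem schied corollary 7.1} once with $p=R$ (yielding a bound of the form $(1+\sqrt{R}\,e^{C_{T,\Gamma,\sigma}R/2})e^{-MR/c}$ and hence $\limsup R^{-1}\log\mu_R(K^c)\le C_{T,\Gamma,\sigma}/2-M/c$), whereas you leave $p$ free and optimize at $p\propto MR$, obtaining the slightly sharper $\limsup R^{-1}\log\mu_R(K^c)\le -M^2/(2c^2C_{T,\Gamma,\sigma})$; either choice gives exponential tightness once $M$ is taken large.
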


\begin{proof}
For $M > 0$ and $a>0$, set 
\begin{equation*}
    H^a_{M} \coloneqq \left\{f \in C([0,T]) \relmiddle{|} f(0) = 0, \sup_{s,t \in [0,T], s \neq t} \frac{|f(t) - f(s)|}{|t-s|^a} \leq M \right\}.
\end{equation*}
By the Ascoli-Arzel\`{a} theorem, $H^a_{M}$ is a compact subset of $(C([0,T]), \widetilde{\mathcal{O}_{\mathrm{u}}})$, and hence $\iota(H^a_{M})$ is compact in $(\R^{[0,T]}, \mathcal{O}_{\mathrm{u}})$, where $\iota \colon (C([0,T], \widetilde{\mathcal{O}_{\mathrm{u}}})) \to (\R^{[0,T]}, \mathcal{O}_{\mathrm{u}})$ is the inclusion map. 

Let $0\leq s < t \leq T$ and $A>0$. 
Using $\E[e^{cX}] = c\int_{\R}e^{cr}P(X \geq r)dr$, which holds for any random variable $X$ and $c>0$, and applying \eqref{diff tail est}, we obtain 
\begin{equation}
    \E[e^{\frac{A}{R}|F_R(t) - F_R(s)|}] 
    \leq \sqrt{\frac{8\pi C_{T, \Gamma, \sigma} |t-s|A^2}{R}} \exp\left(\frac{C_{T, \Gamma, \sigma} |t-s|A^2}{2R}\right).
\end{equation}
Thus, taking $A = R|t-s|^{-\frac{1}{2}}$, we have
\begin{equation*}
    \sup_{s,t \in [0,T], s \neq t}\E\left[\exp{\left( \frac{R}{|t-s|^{\frac{1}{2}}} \left| \frac{1}{R}F_R(t) - \frac{1}{R}F_R(s) \right| \right)}\right] \leq \sqrt{8\pi C_{T, \Gamma, \sigma}R} \exp\left(\frac{C_{T, \Gamma, \sigma}R}{2}\right).
\end{equation*}
This, together with Lemma \ref{Lem schied corollary 7.1}, implies that for any $\delta < \frac{1}{2}$, 
\begin{equation}
\label{mu_R exp tight ineq}
    P\left(\sup_{s,t \in [0,T], s \neq t} \frac{1}{|t-s|^{\delta}} \left| \frac{1}{R}F_R(t) - \frac{1}{R}F_R(s) \right| > M \right) \leq \left(1 + \sqrt{8\pi C_{T, \Gamma, \sigma}R} \exp{\left(\frac{C_{T, \Gamma, \sigma}R}{2} \right)}\right) \exp{\left(-\frac{M R}{C_{T, \delta}}\right)}.
\end{equation}
Since the left-hand side of above equals $\mu_R((\iota(H^{\delta}_{M}))^c)$, it follows from \eqref{mu_R exp tight ineq} that 
\begin{equation}
    \limsup_{R \to \infty} \frac{1}{R} \log \mu_R((\iota(H_M^{\delta}))^{c}) \leq \left(-\frac{M}{C_{T, \delta}}\right) \lor  \left(\frac{C_{T, \Gamma, \sigma}}{2} -  \frac{M}{C_{T, \delta}}\right) = \frac{C_{T, \Gamma, \sigma}}{2} -  \frac{M}{C_{T, \delta}}.
\end{equation}
Therefore, we conclude that the family $\{\mu_R\}_{R>0}$ is exponentially tight on $(\R^{[0,T]}, \cB(\mathcal{O}_{\mathrm{u}}))$.
Combining this exponential tightness and Proposition \ref{Prp projective LDP}, we deduce from \cite[Corollary 4.2.6]{Dembo-Zeitouni} that $\{\mu_R\}_{R>0}$ on $(\R^{[0,T]}, \cB(\mathcal{O}_{\mathrm{u}}))$ satisfies the same LDP as in Proposition \ref{Prp projective LDP}.
Therefore, the same LDP also holds in $(C([0,T]), \cB(\widetilde{\mathcal{O}_{\mathrm{u}}}))$ since $C([0,T])$ is a closed subset of $(\R^{[0,T]}, \mathcal{O}_{\mathrm{u}})$ (\textit{cf.} \cite[Lemma 4.1.5]{Dembo-Zeitouni}).
\end{proof}

\bibliographystyle{amsalpha}
\bibliography{main}

\end{document}